\documentclass[12pt,reqno]{amsart}

\usepackage{amssymb,latexsym}
\usepackage{mathrsfs}
\usepackage{a4wide}
\usepackage{amsthm}
\usepackage[v2,tips]{xy}
\usepackage[T1]{fontenc}
\usepackage[latin1]{inputenc}
\usepackage{tikz}
\usetikzlibrary{matrix,arrows}

\newtheorem{theorem}{Theorem}[section]
\newtheorem{lemma}[theorem]{Lemma}
\newtheorem{proposition}[theorem]{Proposition}
\newtheorem{corollary}[theorem]{Corollary}

\theoremstyle{definition}
\newtheorem{example}[theorem]{Example}
\newtheorem{remark}[theorem]{Remark}

\newtheorem{definition}[theorem]{Definition}

\numberwithin{equation}{section}

\newcounter{smallromans}

\newenvironment{romanenumerate}
{\begin{list}{{\normalfont\textrm{(\roman{smallromans})}}}%
  {\usecounter{smallromans}\setlength{\itemindent}{0cm}%
   \setlength{\leftmargin}{5.5ex}\setlength{\labelwidth}{5.5ex}%
   \setlength{\topsep}{.5ex}\setlength{\partopsep}{.5ex}%
   \setlength{\itemsep}{0.1ex}}}%
{\end{list}}

\newcommand{\romanref}[1]{{\normalfont\textrm{(\ref{#1})}}}

\newcounter{smallromansdash}

{\end{list}}

\newcounter{bigromans} 
\newenvironment{capromanenumerate}
{\begin{list}{{\normalfont\textrm{(\Roman{bigromans})}}}%
    {\usecounter{bigromans}\setlength{\itemindent}{0cm}%
      \setlength{\leftmargin}{5.5ex}\setlength{\labelwidth}{6ex}%
      \setlength{\topsep}{.5ex}\setlength{\partopsep}{.5ex}%
      \setlength{\itemsep}{0.1ex}}}%
  {\end{list}}

\newcounter{smallarabics}
{\end{list}}

\newcounter{smallalphs}

\newenvironment{alphenumerate}
{\begin{list}{{\normalfont\textrm{(\alph{smallalphs})}}}%
  {\usecounter{smallalphs}\setlength{\itemindent}{0cm}%
   \setlength{\leftmargin}{5.5ex}\setlength{\labelwidth}{5.5ex}%
   \setlength{\topsep}{.5ex}\setlength{\partopsep}{.5ex}%
   \setlength{\itemsep}{0.1ex}}}%
{\end{list}}

\newcommand{\alphref}[1]{{\normalfont\textrm{(\ref{#1})}}}

\renewcommand{\le}{\ensuremath{\leqslant}}

\renewcommand{\ge}{\ensuremath{\geqslant}}

\newcommand{\N}{\mathbb{N}}

\newcommand{\R}{\mathbb{R}}
\newcommand{\C}{\mathbb{C}}

\newcommand{\realpart}{\operatorname{Re}}

\newcommand{\smashw}[2][l]{{\text{\makebox[0pt][#1]{$#2$}}}}

\renewcommand{\epsilon}{\ensuremath{\varepsilon}}
\renewcommand{\phi}{\ensuremath{\varphi}}

\title[A topological dichotomy with applications]{A
  weak$^*$-topological dichotomy\\ with applications in operator theory}

%\title[A topological dichotomy and operators]{A topological dichotomy
%  and operators on the Banach space of continuous functions
%  on~$\omega_1$}

%The Banach algebra of bounded operators on the Banach space of
%  con\-tinuous functions on the first uncountable ordinal\\

\author[Kania]{Tomasz Kania}\email{t.kania@lancaster.ac.uk}%
\address{Department of Mathematics and Statistics, Fylde College,
  Lancaster University, Lancaster LA1 4YF, United Kingdom.}

\author[Koszmider]{Piotr Koszmider}\email{P.Koszmider@Impan.pl}%
\address{Institute of Mathematics, Polish Academy of Sciences,
  ul. \'Sniadeckich 8, 00-956 War\-sza\-wa, Poland.}

\author[Laustsen]{Niels Jakob
  Laustsen}\email{n.laustsen@lancaster.ac.uk}%
\address{Department of Mathematics and Statistics, Fylde College,
  Lancaster University, Lancaster LA1 4YF, United Kingdom.}

\subjclass[2010]{Primary: 
  46E15,
  % Banach spaces of continuous, differentiable or analytic functions
  47L10;
  % Algebras of operators on Banach spaces and other topological
  % linear spaces
  Secondary: 03E05,
  % Other combinatorial set theory
  46B50,
  % Compactness in Banach (or normed) spaces
  47L20,
  % Operator ideals
  54D30}
% Compactness
\date{} \keywords{Banach space; continuous functions on the first
  uncountable ordinal interval; scattered space; uniform Eberlein
  compactness; weak$^*$ topology; club set; stationary set; Pressing Down
  Lemma; $\Delta$-system Lemma; Banach algebra of bounded operators;
  maximal ideal; bounded left approximate identity.}
\begin{document}

\begin{abstract} 
  Denote by~$[0,\omega_1)$ the locally compact Hausdorff space
  consisting of all countable ordinals, equipped with the order
  topology, and let~$C_0[0,\omega_1)$ be the Banach space of
  scalar-valued, continuous functions which are defined
  on~$[0,\omega_1)$ and vanish eventually. We show that a weakly$^*$
  compact subset of the dual space of~$C_0[0,\omega_1)$ is either
  uniformly Eberlein compact, or it contains a homeomorphic copy of
  the ordinal interval~$[0,\omega_1]$.

  Using this result, we deduce that a Banach space which is a quotient
  of~$C_0[0,\omega_1)$ can either be embedded in a Hilbert-generated
  Banach space, or it is isomorphic to the direct sum
  of~$C_0[0,\omega_1)$ and a subspace of a Hilbert-generated Banach
  space.  Moreover, we obtain a list of eight equivalent conditions
  describing the Loy--Willis ideal, which is the unique maximal ideal
  of the Banach algebra of bounded, linear operators
  on~$C_0[0,\omega_1)$. As a consequence, we find that this ideal has
  a bounded left approximate identity, thus resolving a problem left
  open by Loy and Willis, and we give new proofs, in some cases of
  stronger versions, of several known results about the Banach
  space~$C_0[0,\omega_1)$ and the operators acting on it.
\end{abstract}

\maketitle

\section{Introduction and statement of main results}
\noindent
The main motivation behind this paper is the desire to deepen our
understanding of the Banach algebra $\mathscr{B}(C_0[0,\omega_1))$ of
(bounded, linear) operators on the Banach space $C_0[0,\omega_1)$ of
scalar-valued, continuous functions which are defined on the locally
compact ordinal in\-ter\-val~$[0,\omega_1)$ and vanish eventually.
Our strategy is to begin at a topological level, where we establish a
new dichotomy for weakly$^*$ compact subsets of the dual space
of~$C_0[0,\omega_1)$, and then use this dichotomy to obtain
information about~$C_0[0,\omega_1)$ and the operators acting on it,
notably a list of eight equivalent conditions characterizing the
unique maximal ideal of~$\mathscr{B}(C_0[0,\omega_1))$.

The Banach space $C_0[0,\omega_1)$ is of course isometrically
isomorphic to the hyperplane \[ \bigl\{f\in C[0,\omega_1] :
f(\omega_1)=0\bigr\} \] of the Banach space~$C[0,\omega_1]$ of
scalar-valued, continuous functions on the compact ordinal
interval~$[0,\omega_1]$, and hence $C_0[0,\omega_1)$
and~$C[0,\omega_1]$ are isomorphic. Since our focus is on properties
that are invariant under Banach-space isomorphism, we shall freely
move between these two spaces in the following summary of the history
of their study.

Semadeni~\cite{semadeniC} was the first to realize
that~$C[0,\omega_1]$ is an interesting Banach space, showing that it
is not isomorphic to its square, and thus producing the joint first
example of an infinite-dimensional Banach space with this
property. (The other example, due to Bessaga and
Pe{\l}czy{\a'n}ski~\cite{bpCartesian}, is James's quasi-reflexive
Banach space.) The Banach-space structure of~$C_0[0,\omega_1)$ was
subsequently explored in much more depth by Alspach and
Benyamini~\cite{alspachbenyamini}, whose main conclusion is
that~$C_0[0,\omega_1)$ is primary, in the sense that
whenever~$C_0[0,\omega_1)$ is decomposed into a direct sum of two
closed subspaces, one of these subspaces is necessarily isomorphic
to~$C_0[0,\omega_1)$.

Loy and Willis~\cite{lw} initiated the study of the Banach
algebra~$\mathscr{B}(C[0,\omega_1])$ from an auto\-matic-continuity
point of view, proving that each derivation from
$\mathscr{B}(C[0,\omega_1])$ into a Banach algebra is automatically
continuous. Their result was subsequently generalized by
Ogden~\cite{ogden}, who established the automatic continuity of each
algebra homomorphism from~$\mathscr{B}(C[0,\omega_1])$ into a Banach
algebra.

Loy and Willis's starting point is the clever identification of a
maximal ideal~$\mathscr{M}$ of co\-dimen\-sion one
in~$\mathscr{B}(C[0,\omega_1])$ (see
equation~\eqref{defnLoyWillisIdeal} below for details of the
definition), while their main technical step \cite[Theorem~3.5]{lw} is
the construction of a bounded right approx\-imate identity
in~$\mathscr{M}$.  The first- and third-named
authors~\cite{kanialaustsen} showed recently that~$\mathscr{M}$ is the
unique maximal ideal of~$\mathscr{B}(C[0,\omega_1])$, and named it the
\emph{Loy--Willis ideal}. We shall here give a new proof of this
result, together with several new characterizations of the Loy--Willis
ideal.  As a consequence, we obtain that~$\mathscr{M}$ has a bounded
left approximate identity, thus complementing Loy and Willis's key
result mentioned above.

The tools that we shall use come primarily from point-set topology and
Banach space theory, and several of our results may be of independent
interest to researchers in those areas, as well as to operator
theorists. Before entering into a more detailed description of this
paper, let us introduce four no\-tions that will play important roles
throughout.
\begin{itemize}
\item A topological space~$K$ is \emph{Eberlein compact} if it is
  homeomorphic to a weakly compact subset of a Banach space; and~$K$
  is \emph{uniformly Eberlein compact} if it is homeomorphic to a
  weakly compact subset of a Hilbert space.
\item A Banach space $X$ is \emph{weakly compactly generated} if it
  contains a weakly compact subset whose linear span is dense in~$X$;
  and~$X$ is \emph{Hilbert-generated} if there exists a a bounded
  operator from a Hilbert space onto a dense subspace of~$X$.
\end{itemize}
These notions are closely related. Uniform Eber\-lein com\-pact\-ness
clearly implies Eberlein compactness, and likewise Hilbert-generation
implies weakly compact generation. A much deeper result, due to Amir
and Lindenstrauss~\cite{al}, states that a compact space~$K$ is
Eberlein compact if and only if the Banach space~$C(K)$ is weakly
compactly generated; and a similar relationship holds between the
other two notions. Their relevance for our purposes stems primarily
from the fact that the ordinal interval~$[0,\omega_1]$ is one of the
``simplest'' compact spaces which is not Eberlein compact.

We shall now outline how this paper is organized and state its main
conclusions. Section~\ref{sectPrelim} contains details of our
notation, key elements of previous work, and some preliminary results.
In section~\ref{section3}, we proceed to study the weakly$^*$ compact
subsets of the dual space of $C_0[0,\omega_1)$, proving in particular
the following topological dichotomy.

\begin{theorem}[Topological Dichotomy]\label{dichotomy}
  Exactly one of the following two alternatives holds for each
  weakly$^*$ compact subset~$K$ of~$C_0[0,\omega_1)^*\colon$
  \begin{capromanenumerate}
  \item\label{dichotomy1} either $K$ is uniformly Eberlein compact; or
  \item\label{dichotomy2} $K$ contains a homeomorphic copy of
    $[0,\omega_1]$ of the form
    \[ \{\rho+\lambda\delta_\alpha: \alpha\in D\}\cup\{\rho\}, \]
    where $\rho\in C_0[0,\omega_1)^*$, $\lambda$ is a non-zero scalar,
      $\delta_\alpha$ is the point evaluation at~$\alpha$, and $D$ is
      a closed and unbounded subset of~$[0,\omega_1)$.
  \end{capromanenumerate}
\end{theorem}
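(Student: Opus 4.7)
The plan is to prove the contrapositive: if $K$ contains no homeomorphic copy of $[0,\omega_1]$ of the prescribed form, then $K$ must be uniformly Eberlein compact. The structural setup is that $[0,\omega_1)$ is a zero-dimensional, scattered, locally compact space, so every $\mu\in C_0[0,\omega_1)^*$ is a purely atomic regular Borel measure $\mu=\sum_{\alpha<\omega_1}c_\alpha(\mu)\delta_\alpha$ with countable support and $\|\mu\|=\sum_\alpha|c_\alpha(\mu)|$; and since each half-line $[0,\alpha)$ is clopen, the truncation $\mu\mapsto\mu|_{[0,\alpha)}$ is a weak$^*$-continuous contractive projection. That the two alternatives cannot both hold is easy: uniformly Eberlein compacta are Corson, hence Fr\'echet--Urysohn, but the point $\omega_1$ in a copy of $[0,\omega_1]$ is not a sequential limit of points from $[0,\omega_1)$.

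The dichotomy is controlled by the uniform tail $t(K,\alpha)=\sup_{\mu\in K}\|\mu|_{[\alpha,\omega_1)}\|$. If $t(K,\alpha)\to 0$ as $\alpha\to\omega_1$, I would pick $\alpha_n<\omega_1$ with $t(K,\alpha_n)<1/n$; then the weak$^*$-continuous map $\mu\mapsto(\mu|_{[0,\alpha_n)})_n$ embeds $K$ injectively into a product $\prod_n B_n$ of norm balls of the duals $C_0[0,\alpha_n)^*$. Each $B_n$, being the dual ball of a separable Banach space, is metrizable in the weak$^*$ topology, hence uniformly Eberlein; a countable product of uniformly Eberlein compacta is uniformly Eberlein, and so is its closed subspace $K$, giving~\romanref{dichotomy1}.

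The heart of the argument is the opposite case, where $t(K,\alpha)>\epsilon$ for some fixed $\epsilon>0$ and cofinally many $\alpha$. For each such $\alpha$ in some stationary $S\subseteq[0,\omega_1)$, I would pick $\mu_\alpha\in K$ with $\|\mu_\alpha|_{[\alpha,\omega_1)}\|>\epsilon$ and then select an atom $\beta_\alpha\geq\alpha$ of $\mu_\alpha$ with $|c_{\beta_\alpha}(\mu_\alpha)|$ bounded below by some positive threshold, taking $\beta_\alpha$ as the \emph{least} such atom so that the ``middle'' interval $(\alpha,\beta_\alpha)$ carries only small mass. I would then successively refine $S$ by three tools: the Pressing Down Lemma applied to the regressive map $\alpha\mapsto\sup(\mathrm{supp}\,\mu_\alpha\cap[0,\alpha))$, to fix a common $\gamma<\omega_1$ bounding the ``head'' supports; the $\Delta$-system Lemma, applied to the essential supports below $\gamma$ thresholded by $1/k$ and diagonalized, to stabilise their finite-mass structure; and weak$^*$-metrizability of norm-bounded measures on $[0,\gamma]$, to force $\mu_\alpha|_{[0,\gamma]}$ to converge weak$^*$ to a fixed $\rho_0$ and $c_{\beta_\alpha}(\mu_\alpha)$ to a fixed non-zero $\lambda$. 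The principal obstacle is to suppress the residual mass $\mu_\alpha|_{(\gamma,\omega_1)\setminus\{\beta_\alpha\}}$: this requires the greedy choice of $\beta_\alpha$, the finite norm bound $\sup_{\mu\in K}\|\mu\|$ (which allows only boundedly many atoms of mass above any threshold), and if needed a further Fodor refinement, so that $\mu_\alpha-\lambda\delta_{\beta_\alpha}$ converges weak$^*$ along the refined stationary set to a single measure $\rho\in K$.

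Once $\rho$ and $\lambda$ are in hand, weak$^*$-closedness of $K$ gives $\rho+\lambda\delta_\alpha\in K$ for every $\alpha$ in the set $D$ of limit points of the refined stationary set, because $\delta_{\alpha'}\to\delta_\alpha$ weak$^*$ as $\alpha'\uparrow\alpha$ (each $f\in C_0[0,\omega_1)$ is continuous at $\alpha$) and $\rho+\lambda\delta_{\alpha'}\in K$ for such $\alpha'$. The set $D$ is a club in $[0,\omega_1)$, and the map $\alpha\mapsto\rho+\lambda\delta_\alpha$, extended by $\omega_1\mapsto\rho$, is the required homeomorphism of $[0,\omega_1]$ onto $\{\rho+\lambda\delta_\alpha:\alpha\in D\}\cup\{\rho\}$: injectivity comes from separation of distinct point masses by elements of $C_0[0,\omega_1)$, and continuity at the ``point at infinity'' uses that $\lambda\delta_\alpha\to 0$ weak$^*$ as $\alpha\to\omega_1$, since every $f\in C_0[0,\omega_1)$ vanishes at infinity.
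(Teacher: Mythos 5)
Your argument has a fatal gap: the dichotomy is set up on the wrong quantity. You split according to whether the tail \emph{variation norms} $t(K,\alpha)=\sup_{\mu\in K}\bigl\|\mu|_{[\alpha,\omega_1)}\bigr\|$ tend to zero, whereas the correct invariant is the tail \emph{measure} $\mu([\alpha,\omega_1))$, that is, the sum of the coefficients of $\mu$ on the tail. Since $t(K,\cdot)$ is non-increasing and $\omega_1$ has uncountable cofinality, $t(K,\alpha)\to 0$ forces every $\mu\in K$ to be supported on a common countable initial segment, so your first case covers only the degenerate situation in which $K$ is metrizable; it cannot deliver the implication ``no copy of $[0,\omega_1]$ implies uniformly Eberlein''. (The paper derives that implication from the hypothesis that $\mu([\alpha,\omega_1))=0$ for all $\mu\in K$ and all $\alpha$ in a club, via the Benyamini--Rudin--Wage--Starbird criterion, by constructing countably many separating families of open $F_\sigma$-sets of uniformly finite order; nothing as soft as an embedding into a countable product of metrizable balls suffices there.)

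Your second case is false as stated. Take $K=\{\delta_\alpha-\delta_{\alpha+1}:\alpha\ \text{a countable limit ordinal}\}\cup\{0\}$, which is the paper's Example~\ref{remark32}: here $t(K,\alpha)=2$ for every $\alpha$, so you are in your second case, and your recipe yields $\lambda=1$, $\beta_\alpha$ a limit ordinal, and residual $-\delta_{\beta_\alpha+1}$ converging weakly$^*$ to $\rho=0$ along the stationary set --- whence the conclusion $\delta_\alpha\in K$ for club-many $\alpha$, which is false, since this $K$ is the one-point compactification of a discrete set and in particular uniformly Eberlein compact. The step that breaks is the last one: from ``$\mu_{\alpha}-\lambda\delta_{\beta_\alpha}\to\rho$ weakly$^*$ along the stationary set'' you cannot infer that $\rho+\lambda\delta_{\alpha'}\in K$, nor that $\rho+\lambda\delta_\alpha\in K$ for $\alpha$ a limit point of that set. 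For the latter you need $\mu_{\alpha_k}\to\rho+\lambda\delta_\alpha$ along sequences $\alpha_k\uparrow\alpha$ with $\alpha$ a \emph{fixed} countable ordinal, and there the residual atoms clustered near $\beta_{\alpha_k}$ (at $\beta_{\alpha_k}+1$ in the example) also accumulate at $\alpha$ and contribute to --- indeed may cancel --- the coefficient of $\delta_\alpha$. The coefficient that survives in the limit is the total tail measure, not the mass of one selected atom (and under a tail-variation hypothesis there need not even exist a single atom of mass bounded below). This is precisely why the paper's stationarity condition concerns $\{\alpha:\mu([\alpha,\omega_1))\neq 0\ \text{for some}\ \mu\in K\}$, and why its $\Delta$-system argument tracks the \emph{sum} $\lambda_k$ of all non-root coefficients and arranges, via the set $\mathfrak{D}$, that all non-root atoms converge to the same limit ordinal. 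Your head/main-atom/residual decomposition cannot be repaired without switching to the tail-measure dichotomy.
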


In Section~\ref{section4}, we turn our attention to the structure of
operators acting on~$C_0[0,\omega_1)$.  In the case where~$T$ is a
bounded, linear surjection from~$C_0[0,\omega_1)$ onto an arbitrary
Banach space~$X$, the adjoint~$T^*$ of~$T$ induces a weak$^*$
homeomorphism of the unit ball of $X^*$ onto a bounded subset
of~$C_0[0,\omega_1)^*$, and hence the above topological dichotomy
leads to the following operator-theoretic dichotomy.

\begin{theorem}[Operator-theoretic Dichotomy]\label{surjective}
  Let~$X$ be a Banach space, and suppose that there exists a bounded,
  linear surjection~$T\colon C_0[0,\omega_1)\to X$. Then exactly one
    of the following two alternatives holds:
  \begin{capromanenumerate}
  \item\label{surjective1} either $X$ embeds in a Hilbert-generated
    Banach space; or
  \item\label{surjective2} the identity operator on~$C_0[0,\omega_1)$
    factors through~$T$, and $X$ is isomorphic to the direct sum
    of~$C_0[0,\omega_1)$ and a subspace of a Hilbert-generated Banach
    space.
  \end{capromanenumerate}
\end{theorem}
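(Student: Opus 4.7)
The plan is to apply the Topological Dichotomy (Theorem~\ref{dichotomy}) to $K := T^*(B_{X^*})$, a bounded weak$^*$-compact subset of~$C_0[0,\omega_1)^*$. Since $T$ is surjective, $T^*$ is bounded below, so its restriction to~$B_{X^*}$ is a weak$^*$-homeomorphism onto~$K$. If alternative~\romanref{dichotomy1} holds for~$K$, then $(B_{X^*}, w^*)$ is uniformly Eberlein compact, and I would invoke the standard characterization (the dual ball being uniformly Eberlein compact is equivalent to $X$ being a subspace of a Hilbert-generated Banach space) to conclude alternative~\romanref{surjective1}.

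If instead alternative~\romanref{dichotomy2} holds, the plan is to use the club set~$D$ to build a complemented copy of $C_0[0,\omega_1)$ inside~$X$. Pulling the copy of $[0,\omega_1]$ back through $T^*|_{B_{X^*}}$ yields functionals $x_\alpha^* \in B_{X^*}$ for $\alpha \in D$ together with $x_{\omega_1}^* \in B_{X^*}$, satisfying $T^*(x_\alpha^* - x_{\omega_1}^*) = \lambda\delta_\alpha$ and $x_\alpha^* \to x_{\omega_1}^*$ in the weak$^*$ topology as $\alpha \to \omega_1$ along~$D$. Setting $(Sx)(\alpha) := \lambda^{-1}(x_\alpha^* - x_{\omega_1}^*)(x)$ for $x \in X$ defines a bounded operator $S\colon X \to C_0(D)$ (the weak$^*$ convergence ensuring that $Sx$ lies in $C_0(D)$), and a direct computation gives $ST = R$, the restriction map $C_0[0,\omega_1) \to C_0(D)$. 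The retraction $\pi\colon [0,\omega_1) \to D$, $\pi(\alpha) := \min\{d \in D : d \geq \alpha\}$, is continuous because $D$ is closed, so $Ef := f \circ \pi$ yields a bounded extension operator $E\colon C_0(D) \to C_0[0,\omega_1)$ with $RE = \mathrm{id}_{C_0(D)}$; furthermore $C_0(D) \cong C_0[0,\omega_1)$ since $D$ is club. Hence $S(TE) = \mathrm{id}_{C_0(D)}$ factors the identity on~$C_0[0,\omega_1)$ through~$T$, and $P := (TE) \circ S$ is a bounded projection of~$X$ onto the closed subspace $Y := TE(C_0(D)) \cong C_0[0,\omega_1)$.

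To complete alternative~\romanref{surjective2} it remains to show that $Z := \ker P$ (equivalently $X/Y$) embeds into a Hilbert-generated Banach space. Because $T$ is surjective and $TER$ takes values in~$Y$, the restriction $T|_{\ker(ER)}$ composed with the quotient map induces a bounded surjection of $\ker(ER) = \{g \in C_0[0,\omega_1) : g|_D = 0\}$ onto $X/Y$. I then expect to identify $\ker(ER)$ with the $c_0$-direct sum $\bigl(\bigoplus_{\beta < \omega_1} C_0(I_\beta)\bigr)_{c_0}$ of separable spaces, where $(I_\beta)_{\beta < \omega_1}$ enumerates the connected components of~$[0,\omega_1) \setminus D$; this is the main technical obstacle, and it reduces to checking that $\{\beta : \|g|_{I_\beta}\|_\infty \geq \varepsilon\}$ is finite for each $\varepsilon > 0$ (a consequence of the continuity of~$g$ at limit points of~$D$, where $g$ vanishes, together with $g \in C_0[0,\omega_1)$). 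Since every separable Banach space is Hilbert-generated and the class of Hilbert-generated Banach spaces is closed under $c_0$-sums (via the $\ell^2$-sum of generating operators) and under quotients, $Z$ itself is Hilbert-generated. Mutual exclusivity follows from the observation that in case~\romanref{surjective2}, $X$ contains $C_0[0,\omega_1)$, whose weak$^*$ dual ball contains the copy $\{\delta_\alpha : \alpha < \omega_1\} \cup \{0\}$ of $[0,\omega_1]$ --- a space that is not even Eberlein compact --- so $X$ cannot embed into any Hilbert-generated Banach space.
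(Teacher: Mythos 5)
Your proof is correct, and its first half --- pulling the copy of $[0,\omega_1]$ back through the weak$^*$ homeomorphism $T^*|_{B_{X^*}}$, defining $S$ from the functionals $x_\alpha^*-x_{\omega_1}^*$, and obtaining $S(TE)=I_{C_0(D)}$ together with the projection $P=TES$ --- is essentially the paper's Lemma~\ref{lemma0102012}; the paper defines $S$ into $C_0[0,\omega_1)$ via the retraction $\pi_D$ and needs the Open Mapping Theorem to check that $Sx$ lands in $C_0[0,\omega_1)$ and that $S$ is bounded, whereas your version into $C_0(D)$ gets the norm bound for free. Where you genuinely diverge is in showing that the complementary summand embeds in a Hilbert-generated space. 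The paper argues by contradiction: it re-applies Lemma~\ref{lemma0102012} to the surjection $f\mapsto (I_X-Q)Tf$ onto $\ker Q$ and deduces $\mathscr{R}_D\cap\mathscr{R}_E=\{0\}$, contradicting Lemma~\ref{intersections}. You instead observe directly that $X/Y$ is a quotient of $\ker(ER)=\ker P_D$, identify $\ker P_D$ with a $c_0$-sum of separable spaces (this is exactly Lemma~\ref{phiDlemma07082012}\romanref{phiDlemma07082012v} and Corollary~\ref{OmegaXiLemma10Jan2013ii}, so the ``technical obstacle'' you flag is already available), and use stability of Hilbert-generation under $c_0$-sums and quotients. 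Your route is more direct, avoids a second appeal to the topological dichotomy, and yields the slightly stronger conclusion that the complement is itself Hilbert-generated rather than merely a subspace of a Hilbert-generated space; the paper's route, by contrast, needs no structure theory of $\ker P_D$ beyond the intersection lemma. Two small points to tidy: the pieces $I_\beta$ are the maximal order-intervals of $[0,\omega_1)\setminus D$, not its connected components (the space is totally disconnected); and membership of $Sx$ in $C_0(D)$ requires weak$^*$ continuity of $\alpha\mapsto x_\alpha^*$ at every limit point of $D$, not only convergence at $\omega_1$ --- but this is exactly what the homeomorphism you invoke provides.
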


As another consequence of Theorem~\ref{dichotomy}, we obtain the
following result.
\begin{theorem}\label{clubandscalar}
  For each bounded operator~$T$ on~$C_0[0,\omega_1)$, there exist a
    unique scalar~$\phi(T)$ and a closed and unbounded subset~$D$
    of~$[0,\omega_1)$ such that
  \begin{equation}\label{clubandscalarEq1}
    (Tf)(\alpha)=\phi(T) f(\alpha)\qquad (f\in
    C_0[0,\omega_1),\,\alpha\in D). \end{equation} Moreover, the
  mapping $T\mapsto\phi(T)$ is linear and multiplicative, and thus a
  character on the Banach algebra~$\mathscr{B}(C_0[0,\omega_1))$.
\end{theorem}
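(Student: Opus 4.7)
The plan is to apply the Topological Dichotomy (Theorem~\ref{dichotomy}) to the weak$^*$-compact set
\[ K := \{T^*\delta_\alpha : \alpha\in[0,\omega_1)\}\cup\{0\}, \]
obtained as the continuous image of the compact space $[0,\omega_1]$ under the weak$^*$-continuous map $\alpha\mapsto T^*\delta_\alpha$ (extended by sending $\omega_1$ to $0$, since $\delta_\alpha\to 0$ weak$^*$ as $\alpha\to\omega_1$). In alternative~\romanref{dichotomy1}, $\{0\}$ is a $G_\delta$ point in $K$ (Eberlein compacta are Corson), so pulling back countably many open $K$-neighbourhoods of $0$ through this surjection produces some $\gamma<\omega_1$ with $T^*\delta_\alpha=0$ whenever $\alpha>\gamma$; taking $\gamma$ to be a successor ordinal makes $D:=(\gamma,\omega_1)$ a closed unbounded set, and $\phi(T):=0$ verifies~\eqref{clubandscalarEq1}.

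Assume instead alternative~\romanref{dichotomy2}, which supplies $\rho\in C_0[0,\omega_1)^*$, a scalar $\lambda\ne 0$, and a club $D'\subseteq[0,\omega_1)$ with $\{\rho+\lambda\delta_\beta:\beta\in D'\}\cup\{\rho\}\subseteq K$. After discarding the (at most one) $\beta$ for which $\rho+\lambda\delta_\beta=0$, pick $\alpha_\beta\in[0,\omega_1)$ satisfying $T^*\delta_{\alpha_\beta}=\rho+\lambda\delta_\beta$; the map $\beta\mapsto\alpha_\beta$ is then injective. I first show $\rho=0$: were $\alpha_\beta<\beta$ on a stationary set, Fodor's Pressing Down Lemma would force $\beta\mapsto\alpha_\beta$ to be constant on a stationary subset, contradicting injectivity. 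Hence $\alpha_\beta\geq\beta$ on a sub-club $D''\subseteq D'$. Letting $\beta\to\omega_1$ through $D''$ then drives $\delta_\beta\to 0$ and $\delta_{\alpha_\beta}\to 0$ weak$^*$, so $\rho=\lim(\rho+\lambda\delta_\beta)=\lim T^*\delta_{\alpha_\beta}=0$.

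To upgrade the mis-indexed identities $T^*\delta_{\alpha_\beta}=\lambda\delta_\beta$ (for $\beta\in D''$) into pointwise equalities $T^*\delta_\alpha=\lambda\delta_\alpha$, set
\[ C:=\{\alpha<\omega_1:\alpha_\beta<\alpha\text{ for every }\beta\in D''\cap[0,\alpha)\}. \]
Since $D''\cap[0,\alpha)$ is countable for each $\alpha<\omega_1$, an iterative closure argument (start from any $\gamma_0$, let $\gamma_{n+1}:=\sup\{\alpha_\beta+1:\beta\in D''\cap[0,\gamma_n)\}$, and observe that $\sup_n\gamma_n\in C$) shows $C$ is a club. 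Let $D:=C\cap D_*$, where $D_*:=\{\alpha\in D'':\alpha=\sup(D''\cap[0,\alpha))\}$ is the (club) set of accumulation points of $D''$ in itself. For each $\alpha\in D$, choose $\beta_n\in D''$ with $\beta_n\nearrow\alpha$; the sandwich $\beta_n\leq\alpha_{\beta_n}<\alpha$ forces $\alpha_{\beta_n}\to\alpha$, and weak$^*$-continuity yields $T^*\delta_\alpha=\lim T^*\delta_{\alpha_{\beta_n}}=\lim\lambda\delta_{\beta_n}=\lambda\delta_\alpha$; thus $\phi(T):=\lambda$ satisfies~\eqref{clubandscalarEq1} on the club $D$.

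Uniqueness of $\phi(T)$ is immediate, since any two witnesses with clubs $D_1,D_2$ must agree on the club $D_1\cap D_2$ after evaluating against any $f\in C_0[0,\omega_1)$ that is non-zero at some point of $D_1\cap D_2$. Linearity and multiplicativity of $\phi$ then fall out by intersecting the clubs associated with $T$, $S$, $T+S$, and $TS$ and reading off $(T+S)f=\phi(T)f+\phi(S)f$ and $(TSf)(\alpha)=\phi(T)(Sf)(\alpha)=\phi(T)\phi(S)f(\alpha)$ pointwise on the common club. The main obstacle I foresee is precisely the third paragraph: the bootstrap from the mis-indexed identities $T^*\delta_{\alpha_\beta}=\lambda\delta_\beta$ to the diagonal equality $T^*\delta_\alpha=\lambda\delta_\alpha$, resolved via the closure-club $C$ together with weak$^*$-continuity at accumulation points of $D''$.
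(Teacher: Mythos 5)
Your handling of alternative~\romanref{dichotomy1} contains a genuine gap, and the topological claim it rests on is false. Neither Eberlein nor Corson compactness implies that points are $G_\delta$: the one-point compactification of an uncountable discrete set is uniformly Eberlein compact, yet its point at infinity is not a $G_\delta$ point. This is not a peripheral pathology here --- it is exactly what happens for the operators you most need to cover. Take $T = I_{C_0[0,\omega_1)} - P_D$ for a club~$D$ with unbounded complement (Lemma~\ref{phiDlemma07082012}); then $T^*\delta_\alpha = \delta_\alpha - \delta_{\pi_D(\alpha)}$ is non-zero for cofinally many~$\alpha$, while $K=\{T^*\delta_\alpha\}\cup\{0\}$ is uniformly Eberlein compact and $0$ is not a $G_\delta$ point of it (compare Example~\ref{remark32}, which is precisely a set of this shape). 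So your conclusion in case~\romanref{dichotomy1} --- that $T^*\delta_\alpha=0$ on a tail $(\gamma,\omega_1)$ --- is simply wrong; the most that is true is vanishing on a club, and extracting that from uniform Eberlein compactness of~$K$ requires a real argument which your proposal does not supply. This is exactly the role of Lemma~\ref{dichotomyw1} in the paper: the continuous map $\theta\colon\alpha\mapsto T^*\delta_\alpha$ (with $\theta(\omega_1)=0$) is either constant on a closed uncountable subset of $[0,\omega_1]$ --- and the constant must then be $\theta(\omega_1)=0$, yielding $\phi(T)=0$ on a club --- or injective on one, which places a copy of $[0,\omega_1]$ inside~$K$ and forces alternative~\romanref{dichotomy2}. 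Without this (or an equivalent) dichotomy, the case $\phi(T)=0$, i.e.\ the entire maximal ideal, is unproved.

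The rest of your argument is essentially sound, and in places genuinely different from the paper's. Your use of Fodor's Pressing Down Lemma to get $\alpha_\beta\ge\beta$ on a club and hence $\rho=0$ is a nice alternative to the paper's route, which first secures injectivity of $\theta$ on a closed uncountable set~$E$ and then observes that an uncountable net of distinct points of $E\cong[0,\omega_1]$ can only converge to~$\omega_1$, whence $\rho=\theta(\omega_1)=0$. Your closure-club~$C$ combined with the accumulation points of~$D''$ plays the same role as the paper's interleaving construction showing that $\{\alpha : T^*\delta_\alpha=\phi(T)\delta_\alpha\}$ is itself a club; both are correct. Two minor points: deleting a single point~$\beta_0$ from a club need not leave a closed set, so pass to $D'\cap(\beta_0,\omega_1)$ instead; and the uniqueness and homomorphism arguments are fine as written. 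As it stands, though, your proof establishes the theorem only for operators with $\phi(T)\neq 0$.
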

We shall call~$\phi$ the \emph{Alspach--Benyamini character} because,
after having discovered the above theorem, we learnt that its main
part can be found in \cite[p.~76, line~$-6$]{alspachbenyamini}.

Theorem~\ref{clubandscalar} is the key step towards our main result: a
list of eight equivalent ways of describing the Loy--Willis
ideal~$\mathscr{M}$ of~$\mathscr{B}(C_0[0,\omega_1))$.  Before we can
state it, another piece of notation is required. The set
\begin{equation}\label{DefnL00} L_0 = \bigcup_{\alpha<\omega_1}
  [0, \alpha]\times\{\alpha+1\} \end{equation} is a
locally compact Hausdorff space with respect to the topology inherited
from the product topology on~$[0,\omega_1)^2$, and, as we shall see
in Corollary~\ref{L0uniformE} below, the Banach space~$C_0(L_0)$ is 
Hilbert-generated. 

\begin{theorem}\label{thmcharloywillis}
  The following eight conditions are equivalent for each bounded
  operator~$T$ on $C_0[0,\omega_1)\colon$
  \begin{alphenumerate}
  \item\label{TinLW} $T$ belongs to the Loy--Willis ideal
    $\mathscr{M};$
  \item \label{lambdazero} there is a closed and unbounded subset~$D$
    of~$[0,\omega_1)$ such that $T_{\alpha,\alpha} = 0$ for each
    $\alpha\in D$, where~$T_{\alpha,\alpha}$ denotes the
    $\alpha^{\text{th}}$ diagonal entry of the matrix associated
    with~$T$, as defined in equation~\eqref{Rudinmatrix} below;
  \item\label{character0} $T$ belongs to the kernel of the
    Alspach--Benyamini character~$\phi;$
  \item\label{Lzero} $T$ factors through the Banach space~$C_0(L_0);$
  \item\label{tishg} the range of $T$ is contained in a
    Hilbert-generated subspace of~$C_0[0,\omega_1);$
 \item\label{tiswcg} the range of $T$ is contained in a weakly
   compactly generated subspace of~$C_0[0,\omega_1);$
  \item\label{Tdoesntfix} $T$ does not fix a copy
    of~$C_0[0,\omega_1);$
  \item\label{Identity} the identity operator on $C_0[0,\omega_1)$
    does not factor through~$T$.
\end{alphenumerate}
\end{theorem}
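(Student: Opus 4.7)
The plan is to close the cycle
$\mathrm{(a)} \iff \mathrm{(b)} \iff \mathrm{(c)} \Rightarrow \mathrm{(d)} \Rightarrow \mathrm{(e)} \Rightarrow \mathrm{(f)} \Rightarrow \mathrm{(g)} \Rightarrow \mathrm{(h)} \Rightarrow \mathrm{(a)}$, pivoting on the Alspach--Benyamini character $\phi$ from Theorem~\ref{clubandscalar}. The equivalence $\mathrm{(b)}\iff\mathrm{(c)}$ drops straight out of~\eqref{clubandscalarEq1}: evaluating $(Tf)(\alpha) = \phi(T) f(\alpha)$ on suitable functions shows that $T_{\alpha,\alpha} = \phi(T)$ on the club produced by the theorem, so the two conditions pick out exactly the same operators. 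For $\mathrm{(a)}\iff\mathrm{(c)}$ I would note that $\ker\phi$ is a codimension-one closed two-sided ideal, hence a maximal ideal, which by the uniqueness result of~\cite{kanialaustsen} must coincide with $\mathscr{M}$; in fact, the completed cycle will recover that uniqueness as a by-product.

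In the forward chain, $\mathrm{(d)}\Rightarrow\mathrm{(e)}$ is free from Corollary~\ref{L0uniformE} (which asserts that $C_0(L_0)$ is Hilbert-generated), and $\mathrm{(e)}\Rightarrow\mathrm{(f)}$ is immediate from the definitions. The step $\mathrm{(g)}\Rightarrow\mathrm{(h)}$ is the contrapositive of the elementary observation that a factorisation $I = ATB$ of the identity forces $TB$ to be bounded below, hence makes $T$ an isomorphism on the embedded copy $B(C_0[0,\omega_1))$. The step $\mathrm{(f)}\Rightarrow\mathrm{(g)}$ uses the standard fact that a weakly compactly generated Banach space cannot contain an isomorphic copy of $C_0[0,\omega_1)$---indeed, every subspace of a weakly compactly generated space is weakly Lindel\"{o}f determined, whereas $C_0[0,\omega_1)$ is not, since the compactum $[0,\omega_1]$ fails to be Corson compact. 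Finally, to close the cycle with $\mathrm{(h)}\Rightarrow\mathrm{(a)}$, I argue the contrapositive: if $T\notin\mathscr{M}$ then $\phi(T)\ne 0$, and by Theorem~\ref{clubandscalar} the restriction map $R\colon C_0[0,\omega_1)\to C_0(D)$, $f\mapsto f|_D$, satisfies $RT = \phi(T)R$. A bounded extension $E\colon C_0(D)\to C_0[0,\omega_1)$ obtained by linear interpolation across the successor-gaps of $D$ then yields $\phi(T)^{-1}\,RTE = I_{C_0(D)}$, which, combined with the canonical iso\-morphism $C_0(D)\cong C_0[0,\omega_1)$ induced by the order-isomorphism of the club $D$ with $[0,\omega_1)$, factors the identity on~$C_0[0,\omega_1)$ through~$T$.

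The main obstacle is $\mathrm{(c)}\Rightarrow\mathrm{(d)}$: constructing an explicit factorisation $T = VU$ through $C_0(L_0)$ under the hypothesis $\phi(T)=0$. The guiding idea is that, by Theorem~\ref{clubandscalar}, $Tf$ vanishes on a club $D$, so its support lies in the complementary disjoint union of $\sigma$-compact successor-gaps between consecutive elements of $D$---a ``lower-triangular'' body of data naturally parametrised by $L_0=\bigcup_{\alpha<\omega_1}[0,\alpha]\times\{\alpha+1\}$, whose second coordinate tracks the successor endpoint controlling the support and whose first coordinate records the function value. One then designs matrix-style bounded operators $U\colon C_0[0,\omega_1)\to C_0(L_0)$ and $V\colon C_0(L_0)\to C_0[0,\omega_1)$ whose composition recovers~$T$. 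The delicate technical point is to verify that the outputs of $U$ genuinely vanish at infinity in the product order topology on $L_0$; this relies on the closed unboundedness of~$D$ and on the behaviour of~$T$ near the limit points of~$D$.
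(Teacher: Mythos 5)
Your overall architecture matches the paper's: the cycle \alphref{character0}$\Rightarrow$\alphref{Lzero}$\Rightarrow$\alphref{tishg}$\Rightarrow$\alphref{tiswcg}$\Rightarrow$\alphref{Tdoesntfix}$\Rightarrow$\alphref{Identity}$\Rightarrow$\alphref{character0}, together with \alphref{lambdazero}$\Leftrightarrow$\alphref{character0} and the identification of~$\ker\phi$ with~$\mathscr{M}$, and the peripheral steps you describe are sound (modulo the remark that your extension operator $E\colon C_0(D)\to C_0[0,\omega_1)$ cannot be built by ``linear interpolation'', which has no meaning across ordinal gaps; the correct choice is the composition operator $S_D\colon g\mapsto g\circ\pi_D$ of Lemma~\ref{phiDlemma07082012}, which extends~$g$ by making it constant on each gap). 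The genuine gap is exactly the step you single out, \alphref{character0}$\Rightarrow$\alphref{Lzero}: you offer only a ``guiding idea'' of designing matrix-style operators $U$ and~$V$ and explicitly leave unverified the ``delicate technical point'' that the outputs of~$U$ vanish at infinity on~$L_0$, so this implication is not proved. Moreover, the construction you envisage is harder than necessary. The paper closes the step with almost no work: if $\phi(T)=0$, then $(Tf)(\alpha)=0$ for every~$\alpha$ in a club~$D$, so by~\eqref{phiDlemma07082012eq2} the range of~$T$ lies in $\ker P_D$ (Lemma~\ref{lemma25102012}); and $\ker P_D$ is isometrically isomorphic to a complemented subspace of~$C_0(L_0)$, because Lemma~\ref{phiDlemma07082012}\romanref{phiDlemma07082012v} exhibits it as $\bigl(\bigoplus_{\alpha<\gamma}C_0[\xi_\alpha,\eta_\alpha)\bigr)_{c_0}$ indexed by the gaps of~$D$, while~\eqref{C0L0asc0dirsum} identifies $C_0(L_0)$ with $\bigl(\bigoplus_{\alpha<\omega_1}C[0,\alpha]\bigr)_{c_0}$ (Corollary~\ref{OmegaXiLemma10Jan2013ii}). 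Writing~$T$ as its corestriction to~$\ker P_D$, followed by the isometric embedding of~$\ker P_D$ onto a complemented subspace of~$C_0(L_0)$ and the corresponding projection back into~$C_0[0,\omega_1)$, yields the factorisation with no convergence at infinity left to check. You should replace your sketch by this argument.

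A further structural difference: for \alphref{TinLW}$\Leftrightarrow$\alphref{character0} you invoke the uniqueness of the maximal ideal from~\cite{kanialaustsen}. That is logically admissible, but it forfeits the point stressed in Remark~\ref{remarkIndepProof}\romanref{remarkIndepProof1}: the paper instead notes that the contrapositive of \alphref{Identity}$\Rightarrow$\alphref{character0} shows that any operator outside~$\ker\phi$ generates the whole algebra as an ideal, so $\ker\phi$ is the \emph{unique} maximal ideal; since $\mathscr{M}$ is maximal by construction, the two coincide, and the uniqueness theorem of~\cite{kanialaustsen} is thereby reproved rather than assumed.
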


\begin{remark}\label{remarkIndepProof}
\begin{romanenumerate}
\item\label{remarkIndepProof1} The equivalence of
  conditions~\alphref{TinLW} and~\alphref{Identity} of
  Theorem~\ref{thmcharloywillis} is the main result of a recent paper
  by the first- and third-named authors~\cite{kanialaustsen}. The
  proof that we shall give of Theorem~\ref{thmcharloywillis} will not
  depend on that result, and thus provides an alternative proof of it.
\item\label{remarkIndepProof2} The equivalence of
  conditions~\alphref{TinLW} and~\alphref{Lzero} of
  Theorem~\ref{thmcharloywillis} disproves the conjecture stated
  immediately after \cite[equation~(5.4)]{kanialaustsen}.
\end{romanenumerate}
\end{remark}

Theorem~\ref{thmcharloywillis} has a number of interesting
consequences, as we shall now explain.  The first, and arguably most
important, of these relies on the following notion.

\begin{definition} A net $(e_\gamma)_{\gamma\in\Gamma}$ in a Banach
  algebra~$\mathscr{A}$ is a \emph{bounded left approximate identity}
  if $\sup_{\gamma\in\Gamma}\|e_\gamma\|<\infty$ and the net
  $(e_\gamma a)_{\gamma\in\Gamma}$ converges to~$a$ for each
  $a\in\mathscr{A}$.  A \emph{bounded right approximate identity} is
  defined analogously, and a \emph{bounded two-sided approximate
    identity} is a net which is simultaneously a bounded left and
  right approximate identity.
\end{definition}

A well-known theorem of Dixon \cite[Proposi\-tion~4.1]{dixon} states
that a Banach algebra which has both a bounded left and a bounded
right approximate identity has a bounded two-sided approximate
identity.  As already mentioned, Loy and Willis constructed a bounded
right approximate identity in~$\mathscr{M}$. Although they did not
state it formally, their result immediately raises the question
whether~$\mathscr{M}$ contains a bounded left (and hence two-sided)
approximate identity. We can now provide a positive answer to this
question.
\begin{corollary}\label{blai} 
  The Loy--Willis ideal~$\mathscr{M}$ contains a net
  $(Q_D)_{D\in\Gamma}$ of projections, each having norm at most two,
  such that, for each operator~$T\in\mathscr{M}$, there is
  $D_0\in\Gamma$ for which \mbox{$Q_DT = T$} whenever $D\ge
  D_0$. Hence $(Q_D)_{D\in\Gamma}$ is a bounded left approximate
  identity in~$\mathscr{M}$.
\end{corollary}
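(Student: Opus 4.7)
My plan is to let the bounded left approximate identity consist of ``complemented retractions'' of the form $Q_D = I - R_D$, one for each club $D$. First I would set $\Gamma$ to be the set of closed and unbounded subsets of $[0,\omega_1)$, directed by reverse inclusion (the intersection of two clubs is a club), and for each $D \in \Gamma$ define a retraction $r_D\colon [0,\omega_1) \to D$ by
\[
r_D(\beta) = \min\{\alpha \in D : \alpha \geq \beta\}.
\]
The first step is to check that $r_D$ is continuous: it is locally constant on each open interval between consecutive elements of $D$, and at a limit point $\alpha \in D$ (necessarily a limit ordinal, and a limit of $D$ from below) one sees that $r_D(\beta) \to \alpha$ as $\beta \nearrow \alpha$. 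Because limit ordinals in $[0,\omega_1)$ admit no nets converging to them from above, only left-continuity needs verification; successor ordinals in $D$ are isolated, so continuity there is automatic. The composition operator $R_D f := f \circ r_D$ is then a well-defined norm-one projection on $C_0[0,\omega_1)$, with image in $C_0$ because $r_D(\beta) \geq \beta$ forces $f \circ r_D$ to vanish wherever $f$ does.

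Next, setting $Q_D := I - R_D$, I obtain an idempotent with $\|Q_D\| \leq 2$. For every $\alpha \in D$ we have $r_D(\alpha) = \alpha$, so $(Q_D f)(\alpha) = 0 = 0 \cdot f(\alpha)$; Theorem~\ref{clubandscalar} therefore yields $\phi(Q_D) = 0$, and the equivalence \alphref{TinLW}$\Leftrightarrow$\alphref{character0} of Theorem~\ref{thmcharloywillis} gives $Q_D \in \mathscr{M}$.

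For the approximate-identity property, let $T \in \mathscr{M}$. The same equivalence combined with Theorem~\ref{clubandscalar} supplies a club $D_0 \in \Gamma$ such that $(Tf)(\alpha) = 0$ for every $f \in C_0[0,\omega_1)$ and every $\alpha \in D_0$. For any $D \geq D_0$ (equivalently $D \subseteq D_0$) and any $\beta \in [0,\omega_1)$, the value $r_D(\beta)$ lies in $D \subseteq D_0$, so $(R_D T f)(\beta) = (Tf)(r_D(\beta)) = 0$; consequently $R_D T = 0$ and $Q_D T = T$, as desired. The main obstacle is the continuity check for $r_D$ at limit ordinals in $D$, but the asymmetry of the order topology on $[0,\omega_1)$ at such points reduces it to a routine left-continuous verification.
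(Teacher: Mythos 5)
Your proposal is correct and follows essentially the same route as the paper: your operator $R_D\colon f\mapsto f\circ r_D$ is exactly the projection $P_D=S_DR_D$ of Lemma~\ref{phiDlemma07082012}\romanref{phiDlemma07082012iv}, and the paper likewise takes $Q_D=I_{C_0[0,\omega_1)}-P_D$ over the clubs ordered by reverse inclusion and shows $P_DT=0$ eventually via the vanishing of $(Tf)|_{D_0}$. The only (immaterial) difference is that you certify $Q_D\in\mathscr{M}$ through the character condition~\alphref{character0} of Theorem~\ref{thmcharloywillis}, whereas the paper invokes Corollary~\ref{OmegaXiLemma10Jan2013ii} to place $Q_D$ in $\mathscr{HG}(C_0[0,\omega_1))$.
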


When discovering this result, we were surprised that the net
$(Q_DT)_{D\in\Gamma}$ does not just converge to~$T$, but it 
actually equals~$T$ eventually. We have, however, subsequently
realized that the even stronger, two-sided counterpart of this
phenomenon occurs in the unique maximal ideal of the $C^*$-algebra
$\mathscr{B}(\ell_2(\omega_1))$, where $\ell_2(\omega_1)$ denotes the
first non-separable Hilbert space; see Example~\ref{BLAIexample} for
details.

Further consequences of Theorem~\ref{thmcharloywillis} include
generalizations of two classical Banach-space theoretic results, the
first of which is Semadeni's seminal observation~\cite{semadeniC} that
$C_0[0,\omega_1)$ is not isomorphic to its square.
\begin{corollary}\label{sum08082012}   
  Let $m,n\in\N$, and suppose that $C_0[0,\omega_1)^m$ is isomorphic
  to either a subspace or a quotient of~$C_0[0,\omega_1)^n$.  Then
  $m\le n$.
\end{corollary}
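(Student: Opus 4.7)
The plan is to combine the Alspach--Benyamini character~$\phi$ of Theorem~\ref{clubandscalar} with characterization~\alphref{tishg} of the Loy--Willis ideal from Theorem~\ref{thmcharloywillis}, via a rank argument.  Writing $E = C_0[0,\omega_1)$, I identify $\mathscr{B}(E^k,E^\ell)$ with $M_{\ell\times k}(\mathscr{B}(E))$, the space of $\ell\times k$ matrices over~$\mathscr{B}(E)$, through composition with the canonical coordinate projections and inclusions.  Applying~$\phi$ entry-wise yields linear maps $\Phi_{k,\ell}\colon\mathscr{B}(E^k,E^\ell)\to M_{\ell\times k}(\mathbb{K})$; because~$\phi$ is multiplicative, these satisfy $\Phi_{k,m}(BA) = \Phi_{\ell,m}(B)\,\Phi_{k,\ell}(A)$ whenever $A\in\mathscr{B}(E^k,E^\ell)$ and $B\in\mathscr{B}(E^\ell,E^m)$, and $\Phi_{k,k}(I_{E^k}) = I_k$.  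Suppose towards a contradiction that $m > n$.

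For the quotient case, let $T\colon E^n\to E^m$ be a bounded surjection.  The matrix $\Phi_{n,m}(T)\in M_{m\times n}(\mathbb{K})$ has rank at most $n<m$, so its rows are linearly dependent: choose scalars $c_1,\ldots,c_m$, not all zero, with $\sum_{j=1}^m c_j\phi(T_{ji}) = 0$ for every $i$.  Then $L\colon E^m\to E$, $(g_1,\ldots,g_m)\mapsto\sum_j c_j g_j$, is surjective, so $LT\colon E^n\to E$ is a surjection whose components $(LT)_i = \sum_j c_j T_{ji}$ all lie in $\ker\phi = \mathscr{M}$.  By Theorem~\ref{thmcharloywillis}\alphref{tishg} each component has range in a Hilbert-generated subspace of~$E$, hence the range of $LT$ lies in a finite sum of such subspaces, which is again Hilbert-generated.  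Since $LT$ is surjective, $E$ itself would be Hilbert-generated---contradicting the classical fact that $C_0[0,\omega_1)$ is not even weakly compactly generated, by Amir--Lindenstrauss, as $[0,\omega_1]$ is not Eberlein compact.

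The subspace case is dual.  Given an isomorphic embedding $J\colon E^m\hookrightarrow E^n$, the matrix $\Phi_{m,n}(J)\in M_{n\times m}(\mathbb{K})$ has rank at most $n<m$ and so its columns are linearly dependent: choose scalars $c_1,\ldots,c_m$, not all zero, with $\sum_{i=1}^m c_i\phi(J_{ji}) = 0$ for every~$j$, and set $M\colon E\to E^m$, $f\mapsto(c_1 f,\ldots,c_m f)$, which is an embedding.  Then $JM\colon E\to E^n$ is an embedding whose components $(JM)_j = \sum_i c_i J_{ji}$ all lie in~$\mathscr{M}$, so by~\alphref{tishg} the range of $JM$ sits inside a product of Hilbert-generated subspaces, which is itself Hilbert-generated.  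Consequently $E$ embeds into some Hilbert-generated Banach space~$Y$; by Hahn--Banach the unit ball $B_{E^*}$ is a weak$^*$-continuous image of a bounded subset of $Y^*$, which is uniformly Eberlein compact because~$Y$ is Hilbert-generated.  Hence $B_{E^*}$ would itself be uniformly Eberlein compact, but it plainly contains the homeomorphic copy $\{\delta_\alpha:\alpha<\omega_1\}\cup\{0\}$ of~$[0,\omega_1]$ (since $f(\alpha)\to 0$ for every $f\in E$), contradicting alternative~(\ref{dichotomy1}) of Theorem~\ref{dichotomy}.  The main obstacle is precisely this subspace case: unlike the quotient case, which rests on the classical non-HG property of~$E$, it requires the stronger fact that $E$ does not embed into \emph{any} Hilbert-generated Banach space, and this is where the Topological Dichotomy becomes indispensable.
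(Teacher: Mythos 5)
Your proof is correct and follows essentially the same route as the paper's: represent the operator by a matrix over $\mathscr{B}(C_0[0,\omega_1))$, apply the Alspach--Benyamini character entrywise, use the rank bound forced by $m>n$ to produce a bounded-below (resp.\ surjective) operator all of whose components lie in $\mathscr{M}=\mathscr{HG}(C_0[0,\omega_1))$, and then contradict the fact that $C_0[0,\omega_1)$ is not contained in (resp.\ does not embed into) a Hilbert-generated space; the paper merely phrases the linear algebra via column-echelon form instead of an explicit row/column dependence. One small correction to your closing remark: the Topological Dichotomy is not indispensable for the subspace case, since Theorem~\ref{w1notUEC}, proved by the elementary weak-Lindel\"{o}f argument, already shows that $C_0[0,\omega_1)$ does not embed in any weakly compactly generated --- hence in any Hilbert-generated --- Banach space, and this is what the paper invokes for both cases.
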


The other is Alspach and Benyamini's main theorem
\cite[Theorem~1]{alspachbenyamini} as it applies to
$C_0[0,\omega_1)$: this Banach space is
primary~\cite[Theorem~1]{alspachbenyamini}.

\begin{corollary}\label{cor08082012b}  For each bounded, linear
  projection~$P$ on~$C_0[0,\omega_1)$, either the kernel of~$P$ is
  isomorphic to $C_0[0,\omega_1)$ and the range of~$P$ embeds
  in~$C_0(L_0)$, or \emph{vice versa}.
\end{corollary}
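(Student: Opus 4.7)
My plan is to combine Alspach and Benyamini's primariness theorem, invoked in the paragraph preceding the corollary, with the equivalences in Theorem~\ref{thmcharloywillis}. By primariness, at least one of $\ker P$ and $\operatorname{ran} P$ is isomorphic to $C_0[0,\omega_1)$; replacing $P$ by $I-P$ if necessary (which interchanges the kernel and range), I may suppose $\ker P \cong C_0[0,\omega_1)$. It then remains only to show that $\operatorname{ran} P$ embeds in $C_0(L_0)$.

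The first step is to verify that $P \in \mathscr{M}$. Fix an isomorphism $\psi\colon C_0[0,\omega_1)\to\ker P$ and write $\iota\colon\ker P\hookrightarrow C_0[0,\omega_1)$ for the inclusion. Since $I-P$ restricts to the identity on $\ker P$, the composition $\psi^{-1}\circ(I-P)\circ\iota\circ\psi$ equals $I_{C_0[0,\omega_1)}$, so the identity operator on $C_0[0,\omega_1)$ factors through $I-P$. The equivalence \alphref{TinLW}$\Leftrightarrow$\alphref{Identity} of Theorem~\ref{thmcharloywillis} therefore gives $I-P\notin\mathscr{M}$. As the Alspach--Benyamini character $\phi$ is multiplicative, has kernel $\mathscr{M}$ (via \alphref{TinLW}$\Leftrightarrow$\alphref{character0}), and satisfies $\phi(I)=1$, the relation $\phi(P)+\phi(I-P)=1$ forces $\phi(P)=0$, so $P\in\mathscr{M}$.

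Applying condition~\alphref{Lzero} produces a factorisation $P=SR$ with $R\colon C_0[0,\omega_1)\to C_0(L_0)$ and $S\colon C_0(L_0)\to C_0[0,\omega_1)$. For $f\in\operatorname{ran} P$, the identity $Pf=f$ yields $f=SRf$ and hence $\|f\|\leq\|S\|\,\|Rf\|$, so $R$ is bounded below on $\operatorname{ran} P$; this provides the required isomorphic embedding of $\operatorname{ran} P$ into $C_0(L_0)$.

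The main obstacle is conceptual rather than technical: one must recognise that primariness supplies the qualitative isomorphism ``for free'', and that Theorem~\ref{thmcharloywillis} is then needed \emph{only} to upgrade the description of the complementary summand from ``a subspace of some Hilbert-generated Banach space''---which would also follow from the weaker conclusion of Theorem~\ref{surjective}---to the sharper ``a subspace of $C_0(L_0)$''. Any attempt to bypass primariness and construct the isomorphism $\ker P\cong C_0[0,\omega_1)$ directly seems to require an Eilenberg-swindle-type absorption, which is delicate because $C_0[0,\omega_1)$ is not isomorphic to its square (Corollary~\ref{sum08082012}).
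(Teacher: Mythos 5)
Your argument is correct, but it takes a genuinely different route from the paper's on the main point. The paper does not invoke Alspach and Benyamini's primariness theorem: Corollary~\ref{cor08082012b} is offered precisely as a new proof (and strengthening) of primariness, so importing \cite[Theorem~1]{alspachbenyamini} gives a valid deduction of the stated result but forfeits its intended content. The paper instead notes that $\phi(P)\in\{0,1\}$ and, in the case $\phi(P)=0$, uses Lemma~\ref{lemma25102012} to place $Y=P[C_0[0,\omega_1)]$ inside $\ker P_D$ for some club set~$D$, so that $Y$ is isomorphic to a complemented subspace of~$C_0(L_0)$; it then carries out exactly the absorption you judged delicate, but runs the swindle on~$Y$ inside~$C_0(L_0)$ rather than on~$C_0[0,\omega_1)$ itself: since $C_0(L_0)\cong c_0(\N,C_0(L_0))$ by Corollary~\ref{c0sumL0}, one gets $C_0(L_0)\cong Y\oplus C_0(L_0)$, hence $C_0[0,\omega_1)\cong C_0[0,\omega_1)\oplus Y$ because $C_0[0,\omega_1)$ contains a complemented copy of~$C_0(L_0)$ (Corollary~\ref{OmegaXiLemma10Jan2013iii}); finally $X=\ker P$ contains a complemented copy of~$C_0[0,\omega_1)$ by Theorem~\ref{surjective}, so $X\cong W\oplus C_0[0,\omega_1)\cong W\oplus C_0[0,\omega_1)\oplus Y\cong X\oplus Y=C_0[0,\omega_1)$. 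Thus your closing remark is the one inaccuracy: the swindle is not blocked by Semadeni's theorem, because it is applied to the Hilbert-generated summand, which \emph{is} isomorphic to the $c_0$-sum of countably many copies of itself. Your treatment of the other summand---deducing $\phi(P)=0$ from the factorization of the identity through $I-P$, then factoring $P=SR$ through $C_0(L_0)$ and observing that $R$ is bounded below on the range of~$P$---is a clean alternative to the paper's, which obtains the marginally stronger conclusion that the range is complemented in~$C_0(L_0)$; just note that $\psi^{-1}$ must be extended to an operator defined on all of $C_0[0,\omega_1)$ (for instance by precomposing with $I-P$), or, more simply, cite Lemma~\ref{operatorsfactoringID} with $W=\ker P$.
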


Another Banach-space-theoretic consequence of
Theorem~\ref{thmcharloywillis} is as follows; it can alternatively be
deduced from \cite[Lemma~1.2 and Proposition~2]{alspachbenyamini}.

\begin{corollary}\label{cor26Oct2012a}
  Let~$X$ be a closed subspace of~$C_0[0,\omega_1)$ such that $X$ is
  isomorphic to $C_0[0,\omega_1)$. Then $X$ contains a closed subspace
  which is complemented in~$C_0[0,\omega_1)$ and isomorphic
  to~$C_0[0,\omega_1)$.
\end{corollary}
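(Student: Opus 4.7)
The plan is to reduce Corollary~\ref{cor26Oct2012a} directly to the equivalence \alphref{TinLW}$\Leftrightarrow$\alphref{Tdoesntfix}$\Leftrightarrow$\alphref{Identity} of Theorem~\ref{thmcharloywillis} by composing the given isomorphism with the inclusion of~$X$ into~$C_0[0,\omega_1)$ and then converting the resulting factorization of the identity into a projection.

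First, I would let $J\colon C_0[0,\omega_1)\to X$ be an isomorphism, write $\iota\colon X\hookrightarrow C_0[0,\omega_1)$ for the inclusion map, and set $T := \iota J\in\mathscr{B}(C_0[0,\omega_1))$. Since $T$ is bounded below on all of~$C_0[0,\omega_1)$, it certainly fixes a copy of~$C_0[0,\omega_1)$ (namely, the whole space), so condition~\alphref{Tdoesntfix} of Theorem~\ref{thmcharloywillis} fails for~$T$; hence $T\notin\mathscr{M}$, and the equivalence of~\alphref{TinLW} with~\alphref{Identity} supplies a factorization $UTV=I$ with $U,V\in\mathscr{B}(C_0[0,\omega_1))$.

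Second, I would turn this factorization into a complemented subspace by the standard idempotent trick: set $P := TVU$. The relation $UTV=I$ gives $P^2 = TV(UTV)U = TVU = P$, so $P$ is a bounded, linear projection on~$C_0[0,\omega_1)$. Its range is $Y := P(C_0[0,\omega_1)) = TV(C_0[0,\omega_1))$, which is a complemented closed subspace of~$C_0[0,\omega_1)$ contained in $T(C_0[0,\omega_1))=X$. Moreover $TV$ is bounded below (again from $UTV=I$), so it is an isomorphism of $C_0[0,\omega_1)$ onto~$Y$, giving the required subspace.

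I foresee no real obstacle here: once Theorem~\ref{thmcharloywillis} is available, Corollary~\ref{cor26Oct2012a} is essentially an application of the single implication \alphref{Tdoesntfix}$\Rightarrow$\alphref{Identity} followed by the routine construction of an idempotent from a factorization of~$I$. If anything, the only small care point is to verify that "fixing a copy" is witnessed by the full space~$C_0[0,\omega_1)$—which is immediate since $T$ is bounded below—so that condition~\alphref{Tdoesntfix} can indeed be negated for~$T$.
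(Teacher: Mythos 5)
Your argument is correct and is essentially the paper's own proof: compose the isomorphism with the inclusion to get an operator $T$ on $C_0[0,\omega_1)$ that fixes a copy of $C_0[0,\omega_1)$, apply Theorem~\ref{thmcharloywillis} to obtain $UTV=I_{C_0[0,\omega_1)}$, and then observe that $TV$ is an isomorphism onto its range $Y\subseteq X$ while $TVU$ is a projection onto~$Y$. No issues.
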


Combining Theorem~\ref{thmcharloywillis} with the techniques developed
by Willis in~\cite{willis}, we obtain a very short proof of Ogden's
main theorem~\cite[Theorem~6.18]{ogden} as it applies to the
ordinal~$\omega_1$. 

\begin{corollary}[Ogden]\label{ogdenthm}
Each algebra homomorphism from~$\mathscr{B}(C_0[0,\omega_1))$ into a
  Banach algebra is automatically continuous.
\end{corollary}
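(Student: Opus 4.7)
The plan is to reduce the claim to the continuity of the restriction $\psi|_{\mathscr{M}}$ and then apply the techniques of Willis~\cite{willis}. Let $\psi\colon \mathscr{B}(C_0[0,\omega_1))\to \mathscr{B}$ be an algebra homomorphism into a Banach algebra~$\mathscr{B}$. By the equivalence of conditions~\alphref{TinLW} and~\alphref{character0} in Theorem~\ref{thmcharloywillis}, the Loy--Willis ideal $\mathscr{M}$ coincides with the kernel of the Alspach--Benyamini character $\phi$, and is therefore of codimension one in $\mathscr{B}(C_0[0,\omega_1))$. Writing $I$ for the identity operator on~$C_0[0,\omega_1)$, the splitting $T = (T-\phi(T)I)+\phi(T)I$ with $T-\phi(T)I\in\mathscr{M}$ reduces the continuity of $\psi$ to that of $\psi|_{\mathscr{M}}$.

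For the restricted homomorphism, the essential input is Corollary~\ref{blai}: $\mathscr{M}$ carries a bounded left approximate identity $(Q_D)_{D\in\Gamma}$ of projections, indexed by the club subsets of $[0,\omega_1)$, with the exceptional stabilization property that for each $T\in\mathscr{M}$ there exists $D_0\in\Gamma$ with $Q_DT=T$ whenever $D\geq D_0$. A classical combinatorial fact about clubs -- that $\Gamma$ is closed under countable intersections -- then upgrades this to the following: for any countable family $(T_n)\subseteq\mathscr{M}$, a single $D\in\Gamma$ witnesses $Q_DT_n=T_n$ simultaneously for all~$n$. This is precisely the ``uniformly factorizable'' approximate identity required by Willis's automatic-continuity machinery. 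Following the argument in \cite{willis}: a hypothetical null sequence $(T_n)\subseteq\mathscr{M}$ with $\psi(T_n)\to b\neq 0$ would be uniformly factored as $T_n=Q_DT_n$, forcing $b=\psi(Q_D)b$ so that $b$ lies in the range of the idempotent $\psi(Q_D)$; Willis's diagonal construction then produces a single operator in $\mathscr{M}$ whose image under $\psi$ would have to reproduce all of the $\psi(T_n)$ simultaneously, contradicting the uniform bound $\|Q_D\|\leq 2$.

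The main obstacle is the faithful execution of the diagonal/stability step in Willis's framework: one must verify that his construction adapts verbatim with the net $(Q_D)_{D\in\Gamma}$ in place of Willis's approximate identity, and that the structural features of $\mathscr{M}$ supplied by Theorem~\ref{thmcharloywillis} (in particular, the factorization through $C_0(L_0)$ furnished by condition~\alphref{Lzero}) suffice in lieu of any additional separability or countability hypotheses appearing in~\cite{willis}. Once this check is in place, the continuity of $\psi|_{\mathscr{M}}$ follows, and the reduction of the first paragraph completes the proof.
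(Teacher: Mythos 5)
Your overall strategy is the paper's: reduce to the continuity of $\psi|_{\mathscr{M}}$ using the codimension-one splitting, and your observation that the countable completeness of the club filter yields a single $D$ with $Q_DT_n=T_n$ for all $n$ is exactly the paper's factorization step (the paper even remarks that this replaces Cohen's Factorization Theorem, taking $T=I_{C_0[0,\omega_1)}-P_D$ and $S_n=T_n$). However, the central step is missing. What Willis's machinery actually delivers, via \cite[Proposition~7]{willis}, is that the \emph{continuity ideal} of $\psi|_{\mathscr{M}}$ contains the ideal of \emph{compressible} operators; to use this one must first prove that every $T\in\mathscr{M}$ is compressible. That is the real content of the paper's proof, and it rests on ingredients you never invoke: by Lemma~\ref{lemma25102012} and Corollary~\ref{OmegaXiLemma10Jan2013ii}, each $T\in\mathscr{M}$ factors through a \emph{complemented} subspace of $C_0[0,\omega_1)$ isomorphic to $C_0(L_0)$, and by Corollary~\ref{c0sumL0} the space $C_0(L_0)$ is isomorphic to the $c_0$-direct sum of countably many copies of itself; Willis's Proposition~1 then gives compressibility. (One also needs Corollary~\ref{sum08082012} together with Willis's Proposition~2 to see that the identity operator is not compressible, so that the compressible operators form a proper ideal.) Merely citing condition~\alphref{Lzero} of Theorem~\ref{thmcharloywillis} is not enough: the $c_0$-stability $C_0(L_0)\cong c_0(\N,C_0(L_0))$ and the complementation are what drive the sliding-hump argument behind Proposition~7. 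You explicitly defer this verification as ``the main obstacle,'' so the proof is not complete.

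Moreover, the mechanism you sketch for the contradiction is not correct. The bounded left approximate identity and the stabilization $Q_DT=T$ play no role in showing that the separating space is trivial; a norm bound such as $\|Q_D\|\le 2$ cannot yield a contradiction against a hypothetically discontinuous $\psi$, since $\psi$ need not be bounded on bounded sets. The correct assembly is: Proposition~7 gives that $S\mapsto\psi(TS)$ is continuous for each fixed $T\in\mathscr{M}$; the club-filter factorization writes a null sequence $(T_n)$ as $T_n=(I_{C_0[0,\omega_1)}-P_D)T_n$ for a single club $D$; hence $\psi(T_n)\to 0$ by continuity of $S\mapsto\psi((I_{C_0[0,\omega_1)}-P_D)S)$. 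Your two ingredients are the right ones, but the bridge between them -- compressibility of every element of $\mathscr{M}$ -- is the part that has to be supplied.
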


Our final result relies on a suitable modification of work of the
third-named author~\cite{laustsen}.

\begin{definition}
  Let~$\mathscr{A}$ be an algebra. The \emph{commutator} of a pair of
  elements~$a,b\in\mathscr{A}$ is given by $[a,b] = ab - ba$.  A
  \emph{trace} on~$\mathscr{A}$ is a scalar-valued, linear
  mapping~$\tau$ defined on~$\mathscr{A}$ such that $\tau(ab) =
  \tau(ba)$ for each pair $a,b\in\mathscr{A}$.
\end{definition}

\begin{corollary}\label{commsandtraces}
Each operator belonging to the Loy--Willis ideal is the sum of at most
three commutators. 

Hence a scalar-valued, linear mapping~$\tau$ defined
on~$\mathscr{B}(C_0[0,\omega_1))$ is a trace if and only if $\tau$ is
a scalar multiple of the Alspach--Benyamini character.  In particular,
each trace on~$\mathscr{B}(C_0[0,\omega_1))$ is automatically
continuous.
\end{corollary}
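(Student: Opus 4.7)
The plan is to prove the commutator statement first, since both the trace characterization and the automatic continuity then follow from it by short abstract arguments. For the commutator statement, I would invoke the equivalence \alphref{TinLW}$\,\Leftrightarrow\,$\alphref{Lzero} of Theorem~\ref{thmcharloywillis}: an operator~$T$ lies in~$\mathscr{M}$ precisely when it factors through the Hilbert-generated space~$C_0(L_0)$, say $T = RS$ with $S\colon C_0[0,\omega_1)\to C_0(L_0)$ and $R\colon C_0(L_0)\to C_0[0,\omega_1)$. The task then reduces to showing that every such factorizable operator is a sum of at most three commutators on~$C_0[0,\omega_1)$.

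For this I would follow the Brown--Pearcy-style scheme used in~\cite{laustsen}, whose effect is to represent any operator factoring through a suitably ``self-reproducing'' subspace as a small sum of commutators. The key structural input required is a complemented copy of a $c_0$-sum $\bigl(\bigoplus_{n\in\N} Z\bigr)_{c_0}$ inside~$C_0[0,\omega_1)$, where~$Z$ is a complemented subspace of~$C_0[0,\omega_1)$ into which the image of~$S$ lands (so~$Z$ contains a copy of~$C_0(L_0)$) and of which $C_0[0,\omega_1)$ is itself isomorphic to a complemented summand. Given such a decomposition, one identifies~$T$ with the $(0,0)$-entry of a block-diagonal operator on the $c_0$-sum and writes it as the commutator $[V,W]$ of a one-sided shift~$V$ with an operator~$W$ assembled from~$R$ and~$S$; the ``corrections'' needed to transport between $C_0[0,\omega_1)$ and the $c_0$-sum contribute at most two further commutators, yielding three in total. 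The main obstacle will be the construction of the required $c_0$-sum decomposition: unlike the spaces treated in~\cite{laustsen}, the ordinal space $[0,\omega_1)$ carries no natural shift, so the decomposition has to be built indirectly, presumably exploiting the stratification of~$L_0$ as an increasing union of closed bounded intervals. This is where the ``suitable modification'' of~\cite{laustsen} announced in the introduction must enter.

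For the second statement, the forward implication is immediate, since~$\phi$, being a scalar-valued algebra homomorphism, is itself a trace. Conversely, any trace~$\tau$ vanishes on every commutator, and therefore, by the commutator bound just established, on the whole of~$\mathscr{M}$. Since Theorem~\ref{thmcharloywillis}\alphref{character0} identifies~$\mathscr{M}$ with $\ker\phi$, which has codimension one in $\mathscr{B}(C_0[0,\omega_1))$, any linear functional vanishing on~$\mathscr{M}$ must be a scalar multiple of~$\phi$; hence $\tau = c\phi$ for some scalar~$c$. Automatic continuity then follows from the classical fact that characters on a unital Banach algebra are norm-continuous (or, alternatively, from Corollary~\ref{ogdenthm}), so that every scalar multiple of~$\phi$ is continuous.
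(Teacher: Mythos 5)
Your proposal follows the paper's proof essentially verbatim: factor $T$ through $C_0(L_0)$ via the equivalence \alphref{TinLW}$\Leftrightarrow$\alphref{Lzero} of Theorem~\ref{thmcharloywillis}, exploit the self-similarity $C_0(L_0)\cong c_0(\N,C_0(L_0))$ from Corollary~\ref{c0sumL0}, run the commutator machinery of~\cite{laustsen}, and then deduce the trace statement from the fact that $\mathscr{M}=\ker\phi$ has codimension one, exactly as you describe. The ``main obstacle'' you anticipate dissolves: no shift on $[0,\omega_1)$ is needed, since \cite[Proposition~3.7]{laustsen} applied directly to $C_0(L_0)$ shows that every operator \emph{on} $C_0(L_0)$ is a sum of at most two commutators, and \cite[Lemma~4.5]{laustsen} transfers this through the factorization at the cost of one further commutator, giving three.
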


\begin{remark}\label{KtheoryBComega1} Building on Corollary~\ref{commsandtraces}, one can 
  prove that the $K_0$-group of the Banach
  algebra~$\mathscr{B}(C_0[0,\omega_1))$ is isomorphic to~$\mathbb{Z}$
  by arguments similar to those given in
  \cite[Section~4]{LaustsenKtheory}, while the $K_1$-group
  of~$\mathscr{B}(C_0[0,\omega_1))$ vanishes. A full proof of these
  results will be published elsewhere~\cite{KKL}.
\end{remark}

\section{Preliminaries}\label{sectPrelim}
\subsection*{General conventions}
Our notation and terminology are fairly standard. We shall now outline
the most important parts.  Let $X$ be a Banach space, always supposed
to be over the scalar field~$\mathbb{K}$, where $\mathbb{K} =
\mathbb{R}$ or $\mathbb{K} = \mathbb{C}$. We write~$B_X$ for the
closed unit ball of~$X$.  The dual space of~$X$ is~$X^*$, and
$\langle\,\cdot\,,\,\cdot\,\rangle$ denotes the duality bracket
between~$X$ and~$X^*$.

By an \emph{operator}, we understand a bounded, linear mapping between
Banach spaces. We write $\mathscr{B}(X)$ for the Banach algebra of all
operators on~$X$, and $\mathscr{B}(X,Y)$ for the Banach space of all
operators from~$X$ to some other Banach space~$Y$. For an
operator~$T\in\mathscr{B}(X,Y)$, we denote by $T^*\in
\mathscr{B}(Y^*,X^*)$ its adjoint, while~$I_X$ is the identity
operator on~$X$.

Given Banach spaces~$W$, $X$, $Y$ and~$Z$ and operators~$S\colon W\to
X$ and~$T\colon Y\to Z$, we say that~$S$ \emph{factors through}~$T$ if
$S = UTR$ for some operators $R\colon W\to Y$ and $U\colon Z\to
X$. The following elementary characterization of the operators that
the identity operator factors through is well known.
\begin{lemma}\label{operatorsfactoringID}
  Let $X$, $Y$ and~$Z$ be Banach spaces, and let $T\colon X\to Y$ be
  an operator. Then the identity operator on~$Z$ factors through~$T$
  if and only if $X$ contains a closed subspace~$W$ such that:
  \begin{itemize}
  \item $W$ is isomorphic to~$Z;$
  \item the restriction of~$T$ to~$W$ is bounded below, in the sense
    that there exists a constant $\epsilon>0$ such that
    $\|Tw\|\ge\epsilon\|w\|$ for each $w\in W;$
  \item the image of~$W$ under~$T$ is complemented in~$Y$.
  \end{itemize}
\end{lemma}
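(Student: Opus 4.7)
The plan is to prove the two implications by direct construction, using the algebraic identity $UTR = I_Z$ to produce (or to verify the properties of) the subspace~$W$.

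For the sufficiency direction, suppose $X$ contains a closed subspace~$W$ with the three listed properties. I fix an isomorphism $J\colon Z\to W$, write $\iota\colon W\hookrightarrow X$ for the inclusion, and note that the restriction $T|_W\colon W\to T(W)$ is injective with closed range because it is bounded below, so it has a bounded inverse $S\colon T(W)\to W$. Using a bounded projection $P\colon Y\to T(W)$ supplied by the complementation hypothesis, I set $R := \iota J\colon Z\to X$ and $U := J^{-1}SP\colon Y\to Z$, and a direct calculation gives $UTR = J^{-1}SP T\iota J = J^{-1}S(T|_W)J = I_Z$.

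For the necessity direction, assume $I_Z = UTR$ with $R\colon Z\to X$ and $U\colon Y\to Z$. The natural candidate is $W := R(Z)$. The inequality $\|z\| = \|UTRz\| \le \|U\|\|T\|\|Rz\|$ shows that $R$ is bounded below, so $W$ is closed and isomorphic to~$Z$ via~$R$. For $w = Rz \in W$, a similar estimate yields $\|Tw\| \ge \|z\|/\|U\| \ge \|w\|/(\|U\|\|R\|)$, so $T|_W$ is bounded below. The key observation is that the operator $P := TRU \in \mathscr{B}(Y)$ satisfies
\[ P^2 \;=\; TR(UTR)U \;=\; TR\cdot I_Z\cdot U \;=\; TRU \;=\; P, \]
so it is a bounded projection on~$Y$, and its range is $TR(Z) = T(W)$; hence $T(W)$ is complemented in~$Y$.

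The argument is essentially a formal manipulation of the identity $UTR = I_Z$, and the only step requiring a small piece of insight is the choice of $TRU$ as the complementing projection in the second direction; everything else is routine bookkeeping with norms. I do not anticipate any genuine obstacle beyond ensuring that the constructed operators are bounded, which is immediate in both directions.
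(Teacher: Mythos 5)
Your proof is correct: both implications are verified cleanly, and the key points (that boundedness below of $T|_W$ yields a bounded inverse onto the closed range, that $UTR=I_Z$ forces $R$ to be bounded below and $U$ to be surjective, and that $TRU$ is an idempotent with range $T[W]$) are all justified. The paper states this lemma as well known and offers no proof of its own, and your argument is exactly the standard one it is alluding to, so there is nothing further to compare.
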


For a Hausdorff space~$K$, $C(K)$ denotes the vector space of
scalar-valued, continuous functions on~$K$. In the case where $K$ is
locally compact, we write $C_0(K)$ for the subspace con\-sisting of
those functions~$f\in C(K)$ which `vanish at infinity', in the sense
that the set \mbox{$\{x\in K : |f(x)|\ge\epsilon\}$} is compact for
each $\epsilon >0$. Then $C_0(K)$ is a Banach space with respect to
the supremum norm. Alternatively, one may define $C_0(K)$ as
\[ C_0(K) = \{ f\in C(\widetilde{K}) : f(\infty) = 0\}, \] where
$\widetilde{K} = K\cup\{\infty\}$ is the one-point compactification
of~$K$.  We identify the dual space of~$C_0(K)$ with the Banach space
of scalar-valued, regular Borel measures on~$K$, and we shall
therefore freely use measure-theoretic terminology and notation when
dealing with functionals on~$C_0(K)$.  Given $x\in K$, we denote by
$\delta_x$ the Dirac measure at~$x$.

Lower-case Greek letters such as $\alpha$, $\beta$, $\gamma$, $\xi$,
$\eta$ and~$\zeta$ denote ordinals. The first infinite ordinal
is~$\omega$, while the first uncountable ordinal is~$\omega_1$. By
convention, we consider~$0$ a limit ordinal.  We use standard interval
notation for intervals of ordinals, so that, given a pair of ordinals
$\alpha\le\beta$, we write $[\alpha, \beta]$ and $[\alpha, \beta)$ for
the sets of ordinals~$\gamma$ such that $\alpha\le\gamma\le\beta$ and
$\alpha\le\gamma <\beta$, respectively.

For a non-zero ordinal $\alpha$, we equip the ordinal
interval~$[0,\alpha)$ with the order topology, which turns it into a
locally compact Hausdorff space that is compact if and only
if~$\alpha$ is a successor ordinal.  (According to the standard
construction of the ordinals, the interval~$[0,\alpha)$ is of course
equal to the ordinal~$\alpha$; we use the notation~$[0,\alpha)$ to
emphasize its structure as a topological space.)  Since~$[0,\alpha)$
is scattered, a classical result of Rudin~\cite{Ru} states that each
regular Borel measure on~$[0,\alpha)$ is purely atomic, so that the
dual space of~$C_0[0,\alpha)$ is isometrically isomorphic to the
Banach space \[ \ell_1(\alpha) = \biggl\{ g\colon
[0,\alpha)\to\mathbb{K} : \sum_{\beta<\alpha} \bigl|g(\beta)\bigr|
<\infty\biggr\} \] via the mapping
\begin{equation}\label{dualofC0isl1} 
  g\mapsto\sum_{\beta<\alpha} g(\beta)\delta_\beta,\quad
  \ell_1(\alpha)\to C_0[0,\alpha)^*. \end{equation}
This implies in particular that each operator~$T$ on~$C_0[0,\alpha)$
can be represented by a scalar-valued \mbox{$[0,\alpha)\times
  [0,\alpha)$}-matrix $(T_{\beta,\gamma})_{\beta,\gamma < \alpha}$
with absolutely summable rows.  The $\beta^{\text{th}}$ row of this
matrix is simply the Rudin representation of the
functional~$T^*\delta_\beta$; that is, $(T_{\beta,\gamma})_{\gamma
  <\alpha}$ is the uniquely determined element of~$\ell_1(\alpha)$
such that
\begin{equation}\label{Rudinmatrix} T^*\delta_\beta = \sum_{\gamma <
  \alpha}T_{\beta,\gamma}\delta_\gamma. \end{equation} This matrix
representation plays an essential role in the original definition of
the Loy--Willis ideal, which is our next topic.

\subsection*{The Loy--Willis ideal}
Suppose that $\alpha = \omega_1+1$ in the notation of the previous
paragraph, and note that $C_0[0,\omega_1+1) = C[0,\omega_1]$. Using
the fact that each scalar-valued, continuous function
on~$[0,\omega_1]$ is eventually constant, Loy and Willis
\cite[Proposition~3.1]{lw} proved that, for each operator~$T$
on~$C[0,\omega_1]$, the $\gamma^{\text{th}}$ column of its matrix,
$k_\gamma^T\colon \beta\mapsto T_{\beta,\gamma}$, considered as a
scalar-valued func\-tion on~$[0,\omega_1]$, has the following three
continuity properties:
\begin{romanenumerate}
\item\label{LoyWillis3.1ii} $k^T_\gamma$ is continuous whenever
  $\gamma=0$ or $\gamma$ is a countable successor ordinal;
\item\label{LoyWillis3.1iii} $k^T_\gamma$ is continuous at $\omega_1$
  for each countable ordinal~$\gamma;$
\item\label{LoyWillis3.1iv} the restriction of $k^T_{\omega_1}$ to
  $[0,\omega_1)$ is continuous, and $\lim_{\beta\to
    \omega_1}k^T_{\omega_1}(\beta)$ exists.
\end{romanenumerate}
Clause~\romanref{LoyWillis3.1iv} is the best possible because the
final column of the matrix associated with the identity operator is
equal to the indicator function~$\mathbf{1}_{\{\omega_1\}}$, which is
dis\-con\-tinuous at~$\omega_1$. Hence, as Loy and Willis observed,
the set
\begin{equation}\label{defnLoyWillisIdeal}
  \mathscr{M} = \{ T\in \mathscr{B}(C[0,\omega_1]):
  k^T_{\omega_1}\ \text{is continuous
    at}\ \omega_1\} \end{equation} is a linear subspace of codimension
one in~$\mathscr{B}(C[0,\omega_1])$.  Since the composition of
operators on~$C[0,\omega_1]$ corresponds to matrix multiplication, in
the sense that
\begin{equation*}
  (ST)_{\alpha,\gamma}=\sum_{\beta\le\omega_1}S_{\alpha,
    \beta}T_{\beta,\gamma}\qquad
  (S,T\in\mathscr{B}(C[0,\omega_1]),\, \alpha,\gamma\in
  [0,\omega_1]),
\end{equation*}
$\mathscr{M}$ is a left ideal, named the \emph{Loy--Willis ideal}
in~\cite{kanialaustsen}. Having codimesion one, $\mathscr{M}$ is
automatically a maximal and two-sided ideal
of~$\mathscr{B}(C[0,\omega_1])$.

Consequently, the Banach algebra~$\mathscr{B}(C_0[0,\omega_1))$ also
contains a maximal ideal of codimension one because it is isomorphic
to~$\mathscr{B}(C[0,\omega_1])$. Loy and Willis's
definition~\eqref{defnLoyWillisIdeal} does not carry over
to~$\mathscr{B}(C_0[0,\omega_1))$ because the matrix of an operator
on~$C_0[0,\omega_1)$ has no final column. Instead we shall define the
Loy--Willis ideal of~$\mathscr{B}(C_0[0,\omega_1))$ as follows. Choose
an isomorphism~$U$ of~$C[0,\omega_1]$ onto~$C_0[0,\omega_1)$, and
declare that an operator~$T$ on~$C_0[0,\omega_1)$ belongs to the
Loy--Willis ideal of~$\mathscr{B}(C_0[0,\omega_1))$ if and only if the
operator~$U^{-1}TU$ on~$C[0,\omega_1]$ belongs the original
Loy--Willis ideal~\eqref{defnLoyWillisIdeal}. Since the latter is a
two-sided ideal, this definition is independent of the choice of the
isomorphism~$U$. We shall denote by~$\mathscr{M}$ the Loy--Willis
ideal of~$\mathscr{B}(C_0[0,\omega_1))$ defined in this way; this
should not cause any confusion with the original Loy--Willis ideal
given by~\eqref{defnLoyWillisIdeal}.

\subsection*{Uniform Eberlein compactness.} The following theorem,
which combines work of Ben\-ya\-mi\-ni, Rudin and Wage~\cite{BRW} and
Ben\-ya\-mi\-ni and Starbird~\cite{BS}, collects several important
characterizations of uniform Eberlein compactness.
\begin{theorem}[Benyamini--Rudin--Wage and Benyamini--Starbird]% 
  \label{bsbrwThm} The following four conditions are equivalent for a
  compact Hausdorff space~$K\colon$
  \begin{alphenumerate}
  \item\label{bsbrwThm1} $K$ is uniformly Eberlein compact;
  \item\label{bsbrwThm4} the Banach space~$C(K)$ is Hilbert-generated;
  \item\label{bsbrwThm5} the unit ball of~$C(K)^*$ is uniformly
    Eberlein compact in the weak$^*$ topology;
  \item\label{bsbrwThm3} there exists a family
    $\mathscr{F}=\bigcup_{n\in \N}\mathscr{F}_n$ of open
    $F_\sigma$-subsets of~$K$ such that:
    \begin{enumerate}
    \item\label{lemmaSimpleTopDich2b} whenever $x, y\in K$ are
      distinct, some $G\in \mathscr{F}$ separates $x$ and~$y$, in the
      sense that either $(x\in G$ and $y\notin G)$ or $(y\in G$ and
      $x\notin G);$ and
    \item\label{lemmaSimpleTopDich2c} $\sup_{x\in K}\bigl|\{
      G\in\mathscr{F}_n : x\in G\}\bigr|$ is finite for each $n\in\N$.
    \end{enumerate}
\end{alphenumerate}
\end{theorem}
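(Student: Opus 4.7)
The plan is to close the cycle $(a)\Rightarrow(b)\Rightarrow(c)\Rightarrow(a)$ via Banach-space duality, and then handle the equivalence $(a)\Leftrightarrow(d)$ separately using the combinatorial arguments of Benyamini--Rudin--Wage and Benyamini--Starbird.

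For $(c)\Rightarrow(a)$, I would use that the Dirac map $x\mapsto\delta_x$ embeds $K$ weak$^*$-homeomorphically into $B_{C(K)^*}$, and that closed subspaces of uniformly Eberlein compacta remain uniformly Eberlein. For $(b)\Rightarrow(c)$, if $T\colon H\to C(K)$ is a bounded operator from a Hilbert space with dense range, then its adjoint $T^*\colon C(K)^*\to H$ is weak$^*$-to-weak continuous and injective, and hence restricts to a weak$^*$-homeomorphism of $B_{C(K)^*}$ onto a weakly compact subset of~$H$. For $(a)\Rightarrow(b)$, I would realize $K$ as a weakly compact subset of $B_H$ for some Hilbert space~$H$; each $h\in H$ yields $\widehat{h}(x)=\langle x,h\rangle\in C(K)$ with $\|\widehat{h}\|_\infty\le\|h\|$, and the linear span of such functions forms a point-separating subalgebra of $C(K)$. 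To upgrade this from algebra generation to Hilbert generation, I would pass to the weighted symmetric Fock space $\bigoplus_{n=0}^\infty 2^{-n}H^{\odot n}$ and send a symmetric tensor $h_1\odot\cdots\odot h_n$ to the product $\widehat{h_1}\cdots\widehat{h_n}$; by Stone--Weierstrass the range of the resulting bounded operator is dense in $C(K)$.

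For $(d)\Rightarrow(a)$, I would construct an embedding $\Phi\colon K\to\bigoplus_n\ell_2(\mathscr{F}_n)$ of the form $\Phi(x)(G)=c_n g_G(x)$ for $G\in\mathscr{F}_n$, where $(c_n)$ is a sequence of positive weights satisfying $\sum_n c_n^2\sup_{x\in K}|\{G\in\mathscr{F}_n:x\in G\}|<\infty$, and each $g_G\colon K\to[0,1]$ is a continuous function with $\{g_G>0\}=G$, obtained from the $F_\sigma$-decomposition of $G$ via Urysohn's lemma applied layer by layer. The uniform point-finiteness condition then bounds $\|\Phi(x)\|_2$ uniformly in~$x$, the separation axiom forces injectivity, and coordinate-wise continuity together with the uniform bound gives continuity of $\Phi$ into the weak topology of~$\ell_2$, so $\Phi(K)$ is weakly compact. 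Conversely, for $(a)\Rightarrow(d)$, after fixing an orthonormal basis for the closed linear span of $K\subset B_H$, Bessel's inequality ensures that each $x\in K$ has only finitely many coordinates of magnitude exceeding any prescribed threshold $1/n$, and the corresponding families of coordinate-slice sets of the form $\{x\in K:\langle x,e_i\rangle>q\}$ organize, after reindexing by the threshold, into uniformly point-finite layers $\mathscr{F}_n$ of open $F_\sigma$ subsets that separate points of~$K$.

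The main obstacle is the $(d)\Rightarrow(a)$ direction, where the weights $(c_n)$ and the Urysohn-type functions $g_G$ must be balanced so that $\Phi$ is simultaneously bounded, injective, and continuous into the weak topology of~$\ell_2$. This is precisely where the assumption that the $G$ are open $F_\sigma$ (rather than merely open) enters the argument: it furnishes sufficiently many continuous functions $g_G$ adapted to the combinatorial structure, without which one could produce only a measurable embedding rather than a topological one.
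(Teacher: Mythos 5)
This theorem is stated in the paper as quoted background: the authors give no proof, attributing the equivalences to Benyamini--Rudin--Wage \cite{BRW} and Benyamini--Starbird \cite{BS} (with \cite[Theorem~6.30]{HMVZ} cited for the closely related Theorem~\ref{charUECandHG}). Your outline is, in substance, a correct reconstruction of the standard arguments from those sources, so there is nothing to compare it against within the paper itself. The cycle \alphref{bsbrwThm1}$\Rightarrow$\alphref{bsbrwThm4}$\Rightarrow$\alphref{bsbrwThm5}$\Rightarrow$\alphref{bsbrwThm1} is sound: for \alphref{bsbrwThm4}$\Rightarrow$\alphref{bsbrwThm5} the adjoint of a dense-range operator $H\to C(K)$ is indeed an injective weak$^*$-to-weak homeomorphism on $B_{C(K)^*}$, and for \alphref{bsbrwThm5}$\Rightarrow$\alphref{bsbrwThm1} the Dirac embedding lands in a weak$^*$ \emph{closed} subset (not ``subspace'') of the ball. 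The symmetric-Fock-space argument for \alphref{bsbrwThm1}$\Rightarrow$\alphref{bsbrwThm4} works because $|\langle u,x^{\otimes n}\rangle|\le\|u\|\,\|x\|^n\le\|u\|$ for $x\in B_H$ and $u\in H^{\odot n}$, so each component map is a contraction and the weights $2^{-n}$ make the direct-sum operator bounded; the range is then a point-separating subalgebra containing the constants. The \alphref{bsbrwThm3}$\Leftrightarrow$\alphref{bsbrwThm1} arguments are also the standard ones, using that on bounded subsets of $\ell_2$ the weak topology agrees with coordinatewise convergence.

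Two small points need attention before this is a complete proof. First, in the complex case the algebra generated by the functions $x\mapsto\langle u,x^{\otimes n}\rangle$ is not self-adjoint, so Stone--Weierstrass does not apply directly; the usual remedy is to work with the underlying real Hilbert space and the real-linear functionals $x\mapsto\realpart\langle x,h\rangle$, recovering the complex case by splitting into real and imaginary parts. Second, in \alphref{bsbrwThm1}$\Rightarrow$\alphref{bsbrwThm3} the one-sided slices $\{x\in K:\langle x,e_i\rangle>q\}$ with $q<0$ are \emph{not} uniformly point-finite (a single point may lie in all of them), so you must use two-sided slices --- sets $\{\langle\,\cdot\,,e_i\rangle>q\}$ for rational $q>0$ and $\{\langle\,\cdot\,,e_i\rangle<q\}$ for rational $q<0$ (and likewise for imaginary parts) --- for which Bessel's inequality does give the uniform bound and which still separate points. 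Both fixes are routine, and with them your argument goes through.
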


Another important theorem that we shall require is the following
internal characterization of the Banach spaces which embed in a
Hilbert-generated Banach space.  It is closely related to the
equivalence of conditions~\alphref{bsbrwThm4} and~\alphref{bsbrwThm5}
%of Theorem~\ref{bsbrwThm} 
above. We refer to \cite[Theorem~6.30]{HMVZ} for a proof.

\begin{theorem}\label{charUECandHG}
  A Banach space~$X$ embeds in a Hilbert-generated Banach space if and
  only if the unit ball of~$X^*$ is uniformly Eberlein compact in the
  weak$^*$ topology.
\end{theorem}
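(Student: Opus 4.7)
The plan is to derive both implications of Theorem~\ref{charUECandHG} from Theorem~\ref{bsbrwThm}, supplemented by the standard permanence facts that uniform Eberlein compactness is inherited by closed subspaces and by continuous images of compact Hausdorff spaces.

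For the ``$\Leftarrow$'' direction, suppose $(B_{X^*}, w^*)$ is uniformly Eberlein compact. Applying the implication \alphref{bsbrwThm1}$\Rightarrow$\alphref{bsbrwThm4} of Theorem~\ref{bsbrwThm} with $K = (B_{X^*}, w^*)$ gives that $C(B_{X^*})$ is Hilbert-generated. The canonical evaluation map $x\mapsto\widehat{x}$, where $\widehat{x}(\phi) = \langle x, \phi\rangle$ for $\phi\in B_{X^*}$, is a linear isometry of $X$ into the Hilbert-generated space $C(B_{X^*})$, as required.

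For the ``$\Rightarrow$'' direction, assume $j\colon X \to Y$ is an isomorphic embedding into a Hilbert-generated Banach space $Y$, and choose a Hilbert space $H$ together with a bounded operator $T\colon H \to Y$ having dense range. I would first show that $(B_{Y^*}, w^*)$ is uniformly Eberlein compact: the adjoint $T^*\colon Y^* \to H^*$ is injective (because $T$ has dense range), weak$^*$-to-weak$^*$ continuous, and since $H^*$ is reflexive its weak$^*$ and weak topologies agree. Restricted to the weak$^*$ compact ball $B_{Y^*}$, the map $T^*$ is therefore a continuous injection into the weakly compact set $\|T\|\,B_{H^*}$, and hence, as a continuous injection from a compact space into a Hausdorff space, a homeomorphism onto its image, which is a weakly compact subset of $H^*$. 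Next I would transfer this to $X^*$: the adjoint $j^*\colon Y^* \to X^*$ is weak$^*$-to-weak$^*$ continuous, and by the Hahn-Banach theorem there exists $c > 0$ with $B_{X^*} \subseteq j^*(c B_{Y^*})$. Writing $K = (j^*)^{-1}(B_{X^*}) \cap c B_{Y^*}$, the set $K$ is weak$^*$ closed in $c B_{Y^*}$, hence weak$^*$ compact and uniformly Eberlein compact as a closed subspace of $c B_{Y^*}$, and its image under $j^*$ equals $B_{X^*}$.

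The main obstacle is the final step, which relies on the non-trivial fact that continuous images of uniformly Eberlein compact spaces are uniformly Eberlein compact. The cleanest derivation I know proceeds via characterization \alphref{bsbrwThm3} of Theorem~\ref{bsbrwThm}: given a continuous surjection $\pi\colon K \to L$ between compact Hausdorff spaces and a family $\mathscr{F} = \bigcup_n \mathscr{F}_n$ on $K$ satisfying \alphref{bsbrwThm3}, one verifies that the sets of the form $L \setminus \pi(K \setminus G)$ for $G \in \mathscr{F}$ form a family on $L$ satisfying the same conditions; separation of points uses the surjectivity of $\pi$, while $\sigma$-point-finiteness is inherited by pulling back. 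The classical reference is~\cite{BRW}.
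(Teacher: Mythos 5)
The paper offers no proof of this statement---it simply cites \cite[Theorem~6.30]{HMVZ}---and your overall strategy is the standard one behind that result. Your ``$\Leftarrow$'' direction is correct: the evaluation embedding of $X$ into $C(B_{X^*},w^*)$ combined with the implication \alphref{bsbrwThm1}$\Rightarrow$\alphref{bsbrwThm4} of Theorem~\ref{bsbrwThm} does the job. Your reduction of ``$\Rightarrow$'' is also sound: $T^*$ restricted to $B_{Y^*}$ is a weak$^*$-to-weak homeomorphism onto a weakly compact subset of the Hilbert space $H^*$, and $B_{X^*}=j^*[K]$ for a weak$^*$ compact, uniformly Eberlein compact subset $K$ of $cB_{Y^*}$. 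So everything hinges on the Benyamini--Rudin--Wage theorem that a continuous image of a uniformly Eberlein compact space is uniformly Eberlein compact.

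The gap is in your sketch of that last theorem. The pushforward family $\{L\setminus\pi(K\setminus G): G\in\mathscr{F}\}$ does \emph{not} in general separate points of $L$: a point $y$ lies in $L\setminus\pi(K\setminus G)$ only if the \emph{entire fibre} $\pi^{-1}(y)$ is contained in $G$, and a point-separating family on $K$ gives no control over whole fibres. Concretely, take $K=\{0,1\}\times[0,1]$, $L=[0,1]$ and $\pi$ the second-coordinate projection, and let $\mathscr{F}$ consist of the clopen set $\{0\}\times[0,1]$ together with the sets $\{i\}\times(a,b)$ for $i\in\{0,1\}$ and rational $a<b$, each placed in its own level $\mathscr{F}_n$. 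This family satisfies condition \alphref{bsbrwThm3} of Theorem~\ref{bsbrwThm}, yet no member contains a full fibre $\{0,1\}\times\{t\}$, so every set $L\setminus\pi(K\setminus G)$ is empty and the resulting family on $L$ separates nothing. (It is also unclear that $L\setminus\pi(K\setminus G)$ is $F_\sigma$.) The continuous-image theorem for uniformly Eberlein compacta is genuinely nontrivial; indeed, applying the very statement you are proving to the isometric embedding $g\mapsto g\circ\pi$ of $C(L)$ into the Hilbert-generated space $C(K)$ shows the two results to be essentially equivalent, so your reduction, while valid, leaves the main difficulty where it started. To close the gap you must either reproduce the actual argument of \cite{BRW}, which refines the separating family in a much more careful way than a naive pushforward, or simply invoke \cite[Theorem~6.30]{HMVZ} as the paper does.
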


\subsection*{The ideal of Hilbert-generated operators}
The first-named author and Kochanek~\cite{kaniakochanek} have recently
introduced the notion of a \emph{weakly compactly generated operator}
as an operator whose range is contained in a weakly compactly
generated subspace of its codomain, and have shown that the collection
of all such operators forms a closed operator ideal in the sense of
Pietsch. We shall now define the analogous operator ideal
corresponding to the class of Hilbert-generated Banach spaces.

\begin{definition}
An operator $T$ between Banach spaces $X$ and $Y$ is
\emph{Hilbert-generated} if its range $T[X]$ is contained in a
Hilbert-generated subspace of~$Y$; that is, there exist a Hilbert
space~$H$ and an operator~$R\colon H\to Y$ such that
$T[X]\subseteq\overline{R[H]}$. We write $\mathscr{HG}(X,Y)$ for the
set of Hilbert-generated operators from~$X$ to~$Y$.
\end{definition}

\begin{proposition}\label{hilbertideal} 
\begin{romanenumerate}
\item\label{hilbertideal1} The class $\mathscr{HG}$ is a closed
  operator ideal.
\item\label{hilbertideal2} Let $X$ be a Banach space. Then the ideal
  $\mathscr{HG}(X)$ is proper if and only if $X$ is not
  Hilbert-generated.
\end{romanenumerate}
\end{proposition}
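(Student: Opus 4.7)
The plan is to verify that $\mathscr{HG}$ satisfies Pietsch's axioms for a closed operator ideal (rank-one operators lie in $\mathscr{HG}$; each $\mathscr{HG}(X,Y)$ is a norm-closed linear subspace of $\mathscr{B}(X,Y)$; and $\mathscr{HG}$ is stable under pre- and post-composition with arbitrary bounded operators), and then deduce \romanref{hilbertideal2} from the observation that $I_X \in \mathscr{HG}(X)$ if and only if $X$ is itself Hilbert-generated.

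For the routine parts of \romanref{hilbertideal1}, I would argue as follows. Every one-dimensional subspace is trivially Hilbert-generated, so $\mathscr{HG}$ contains all rank-one operators. For the two-sided ideal property, if $T\in\mathscr{HG}(Y,Z)$ with $T[Y]\subseteq \overline{R[H]}$ for some Hilbert space~$H$ and operator $R\colon H\to Z$, then for any operators $V\colon X\to Y$ and $U\colon Z\to W$ we have $(UTV)[X]\subseteq U\bigl[\,\overline{R[H]}\,\bigr]\subseteq \overline{(UR)[H]}$ by continuity of $U$, so $UTV\in\mathscr{HG}(X,W)$ via the Hilbert space $H$ and operator $UR$. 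For the linearity of $\mathscr{HG}(X,Y)$, if $S$ and $T$ lie in $\mathscr{HG}(X,Y)$ with associated Hilbert spaces $H_1,H_2$ and operators $R_1,R_2$, I would form the Hilbertian direct sum $H_1\oplus_2 H_2$ and the operator $(h_1,h_2)\mapsto R_1h_1+R_2h_2$; its closed range contains both $\overline{R_1[H_1]}$ and $\overline{R_2[H_2]}$, and hence contains $(S+T)[X]$.

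The one step that requires a little care is norm-closedness: given a sequence $T_n\in\mathscr{HG}(X,Y)$ converging to $T$, with $T_n[X]\subseteq\overline{R_n[H_n]}$, I would form the Hilbert space $H=\bigoplus_{n\in\N} H_n$ (with $\ell_2$ norm) and the operator
\[ R\colon H\to Y,\qquad (h_n)_n\mapsto \sum_{n\in\N}\frac{1}{2^n(1+\|R_n\|)}R_nh_n, \]
whose boundedness follows from a Cauchy--Schwarz estimate. By plugging in vectors supported on a single coordinate, one sees $R_n[H_n]\subseteq R[H]$ for every~$n$, so $\overline{R[H]}$ is a closed Hilbert-generated subspace of~$Y$ containing $T_n[X]$ for all~$n$, and hence contains $T[X]=\lim_n T_n[X]$.

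For \romanref{hilbertideal2}, note that $\mathscr{HG}(X)$ is non-proper precisely when it contains~$I_X$, and $I_X\in\mathscr{HG}(X)$ means $X=I_X[X]$ is contained in $\overline{R[H]}$ for some Hilbert space $H$ and operator $R\colon H\to X$. Since $\overline{R[H]}\subseteq X$ automatically, this is equivalent to $X=\overline{R[H]}$, which is exactly the assertion that $X$ is Hilbert-generated. The main obstacle is really just the Hilbert-sum construction in the closedness step; the rest reduces to bookkeeping with the definition.
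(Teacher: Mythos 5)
Your proposal is correct and follows essentially the same route as the paper: the same Hilbertian direct sum $H_1\oplus_2 H_2$ for additivity, the same continuity argument for the two-sided ideal property, the same weighted $\ell_2$-sum $\bigl(\bigoplus_{n}H_n\bigr)_{\ell_2}$ for norm-closedness (the paper normalizes the $R_n$ to be contractive and uses weights $1/n$ where you use $2^{-n}(1+\|R_n\|)^{-1}$, an immaterial difference), and the same observation about $I_X$ for part \romanref{hilbertideal2}. No gaps.
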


\begin{proof} 
\romanref{hilbertideal1}.  Every finite-rank operator is clearly
Hilbert-generated.

Let $W$, $X$, $Y$ and $Z$ be Banach spaces.  To see that
$\mathscr{HG}(X,Y)$ is closed under addition, suppose that
$T_1,T_2\in\mathscr{HG}(X,Y)$. For $n=1,2$, take a Hilbert space~$H_n$ and
an operator $R_n\colon H_n\to Y$ such that
$T_n[X]\subseteq\overline{R_n[H_n]}$, and define 
\[ R\colon\ (x_1,x_2)\mapsto R_1x_1 + R_2x_2,\quad H_1\oplus H_2\to
Y. \] This is clearly a bounded operator with respect to the
$\ell_2$-norm on~$H_1\oplus H_2$, and we have
\enlargethispage*{10pt}
\[ \overline{R[H_1\oplus H_2]} = 
\overline{R_1[H_1]+R_2[H_2]}\supseteq T_1[X] + T_2[X] =
(T_1+T_2)[X], \] which proves that $T_1+T_2\in\mathscr{HG}(X,Y)$.
\newpage

Next, given $S\in\mathscr{B}(W,X)$, $T\in\mathscr{HG}(X,Y)$ and
$U\in\mathscr{B}(Y,Z)$, take a Hilbert space~$H$ and an operator
$R\colon H\to Y$ such that $T[X]\subseteq\overline{R[H]}$. Then, by
the continuity of~$U$, we obtain $UTS[W]\subseteq\overline{UR[H]}$, so
that $UTS\in\mathscr{HG}(W,Z)$.

Finally, suppose that $(T_n)_{n\in\N}$ is a norm-convergent sequence
in~$\mathscr{HG}(X,Y)$ with limit~$T$, say. For each $n\in\N$, take a
Hilbert space~$H_n$ and a contractive operator $R_n\colon H_n\to Y$ such that
$T_n[X]\subseteq\overline{R_n[H_n]}$. 
The Cauchy--Schwarz inequality ensures that
\[ R\colon\ (x_n)_{n\in\N}\mapsto \sum_{n\in\N}\frac{1}{n}R_nx_n \]
defines a bounded operator from the Hilbert space $H =
\bigl(\bigoplus_{n\in\N} H_n\bigr)_{\ell_2}$ into~$Y$.  The fact that
$R_n[H_n]\subseteq R[H]$ implies that $T_n[X]\subseteq\overline{R[H]}$
for each $n\in\N$, and therefore $T[X]\subseteq\overline{R[H]}$, which
proves that $T\in\mathscr{HG}(X,Y)$.

\romanref{hilbertideal2}. This is immediate because the range of the
identity operator on~$X$ is contained in a Hilbert-generated subspace
of~$X$ if and only if $X$ itself is Hilbert-generated.
\end{proof}

\mathversion{bold}
\subsection*{The Banach space $C_0(L_0)$} 
\mathversion{normal} The $c_0$-direct sum of a family $(X_j)_{j\in J}$
of Banach spaces is given by
\[ \biggl(\bigoplus_{j\in J}X_j\biggr)_{c_0} = \bigl\{ (x_j)_{j\in J}
: x_j\in X_j\ (j\in J)\ \text{and}\ \{ j\in J : \|x_j\|\ge\epsilon\}\
\text{is finite for each}\ \epsilon>0\bigr\}. \] In the case where
$X_j = X$ for each $j\in J$, we write $c_0(J,X)$ instead of
$\bigl(\bigoplus_{j\in J}X_j\bigr)_{c_0}$.  This no\-tion is relevant
for our purposes due to the following well-known elementary lemma
(\emph{e.g.}, see \cite[p.~191, Exercise 9]{conway}).

\begin{lemma}\label{conwaylemma}
  Let $L$ be the disjoint union of a family $(L_j)_{j\in J}$ of
  locally compact Hausdorff spaces. Then $C_0(L)$ is isometrically
  isomorphic to $\bigl(\bigoplus_{j\in J}C_0(L_j)\bigr)_{c_0}$.
\end{lemma}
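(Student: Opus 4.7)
The plan is to construct a natural candidate isomorphism and verify its properties directly from the definitions. Since $L$ is the disjoint union of the $L_j$'s, each $L_j$ is clopen in $L$, so for any $f\in C_0(L)$ the restriction $f|_{L_j}$ is continuous on $L_j$. I will show that the assignment
\[ \Phi\colon\ f\mapsto (f|_{L_j})_{j\in J} \]
defines a linear isometry of $C_0(L)$ onto $\bigl(\bigoplus_{j\in J}C_0(L_j)\bigr)_{c_0}$.

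First I would verify that $\Phi$ lands in the target space. For fixed $j\in J$ and $\epsilon>0$, the set $\{x\in L_j:|f|_{L_j}(x)|\ge\epsilon\}$ is the intersection of the compact set $\{x\in L:|f(x)|\ge\epsilon\}$ with the closed subset $L_j$, hence compact, so $f|_{L_j}\in C_0(L_j)$. To see that $(f|_{L_j})_{j\in J}$ is a $c_0$-family, fix $\epsilon>0$ and suppose $\|f|_{L_j}\|_\infty\ge\epsilon$ for infinitely many $j$; choosing $x_j\in L_j$ with $|f(x_j)|\ge\epsilon/2$, the points $x_j$ lie in the compact set $K=\{x\in L:|f(x)|\ge\epsilon/2\}$, but the open cover $\{L_j\}_{j\in J}$ of $K$ has pairwise disjoint members, so only finitely many $L_j$ meet $K$, a contradiction.

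For surjectivity and the inverse, given $(f_j)_{j\in J}$ in the $c_0$-direct sum, define $f\colon L\to\mathbb{K}$ by $f(x)=f_j(x)$ for $x\in L_j$. Continuity is local and each $L_j$ is open, so $f$ is continuous on $L$. For $\epsilon>0$, the index set $J_\epsilon=\{j\in J:\|f_j\|_\infty\ge\epsilon\}$ is finite, and
\[ \{x\in L:|f(x)|\ge\epsilon\}=\bigcup_{j\in J_\epsilon}\{x\in L_j:|f_j(x)|\ge\epsilon\}, \]
a finite union of sets that are compact in $L_j$ and hence in $L$, so $f\in C_0(L)$ with $\Phi(f)=(f_j)_{j\in J}$. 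Linearity of $\Phi$ is obvious, and the isometry property follows from
\[ \|f\|_\infty=\sup_{x\in L}|f(x)|=\sup_{j\in J}\sup_{x\in L_j}|f_j(x)|=\sup_{j\in J}\|f_j\|_\infty, \]
which is the norm on the $c_0$-direct sum. There is no real obstacle here; the only point requiring any care is the compactness argument showing that $(\|f|_{L_j}\|_\infty)_{j\in J}$ belongs to $c_0(J)$, which uses the disjointness of the $L_j$'s together with the compactness of the level sets of $f$.
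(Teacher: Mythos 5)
Your proof is correct and complete. The paper itself gives no proof of this lemma --- it is stated as a well-known elementary fact with a pointer to an exercise in Conway's book --- and your direct verification (restriction map $\Phi$, the compactness argument via the level sets $\{|f|\ge\epsilon/2\}$ and the pairwise disjointness of the $L_j$ to show the norms form a $c_0$-family, the gluing argument for surjectivity, and the evident isometry identity) is exactly the standard argument that reference intends. Nothing further is needed.
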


Since the locally compact space~$L_0$ given by~\eqref{DefnL00} is the
disjoint union of the compact ordinal intervals $[0,\alpha]$ for
$\alpha<\omega_1$, this implies in particular that
\begin{equation}\label{C0L0asc0dirsum}
  C_0(L_0)\cong \biggl(\bigoplus_{\alpha<\omega_1}C[0,\alpha]\biggr)_{c_0}.
\end{equation}

\begin{corollary}\label{c0sumL0} The Banach space $C_0(L_0)$ is
  isomorphic to the $c_0$-direct sum of countably many copies of
  itself.
\end{corollary}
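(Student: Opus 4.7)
My plan is to leverage the $c_0$-decomposition $C_0(L_0) \cong \bigl(\bigoplus_{\alpha<\omega_1} C[0,\alpha]\bigr)_{c_0}$ of equation~\eqref{C0L0asc0dirsum}, partition the index set $[0,\omega_1)$ into countably many blocks each yielding (after some absorption) the same Banach space $Z$, and then conclude from the bijection $\N\times\N\cong\N$ that $c_0(\N,C_0(L_0)) \cong c_0(\N,Z) \cong C_0(L_0)$.

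To set up the partition, I will use ordinal division by $\omega$: each $\alpha<\omega_1$ admits a unique representation $\alpha = \omega\cdot\mu + n$ with $\mu<\omega_1$ and $n<\omega$. Setting $A_n := \{\omega\cdot\mu + n : \mu<\omega_1\}$ gives a partition $[0,\omega_1) = \bigsqcup_{n<\omega}A_n$, so that by associativity of the $c_0$-sum,
\[ C_0(L_0) \cong \biggl(\bigoplus_{n<\omega} Y_n\biggr)_{c_0}\qquad\text{where}\qquad Y_n := \biggl(\bigoplus_{\mu<\omega_1} C[0,\omega\cdot\mu+n]\biggr)_{c_0}. \]
It will then suffice to show each $Y_n$ is isomorphic to $Z:=\bigl(\bigoplus_{1\le\mu<\omega_1} C[0,\omega\cdot\mu]\bigr)_{c_0}$, because then $C_0(L_0) \cong c_0(\N,Z)$ and hence $c_0(\N,C_0(L_0)) \cong c_0\bigl(\N,c_0(\N,Z)\bigr)\cong c_0(\N\times\N,Z)\cong c_0(\N,Z)\cong C_0(L_0)$.

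The pivotal claim will be that $C[0,\omega\cdot\mu+n] \cong C[0,\omega\cdot\mu]$ for every $\mu\ge 1$. Since $\omega\cdot\mu$ is a limit ordinal, the $n$ successor points $\{\omega\cdot\mu+1,\ldots,\omega\cdot\mu+n\}$ form a finite clopen set of isolated points of $[0,\omega\cdot\mu+n]$, so $C[0,\omega\cdot\mu+n] \cong C[0,\omega\cdot\mu] \oplus \mathbb{K}^n$. I will then observe that $[0,\omega]$ is a clopen subset of $[0,\omega\cdot\mu]$ (since $[0,\omega+1)$ and $(\omega,\omega\cdot\mu]$ are both basic order-open sets), so $C[0,\omega\cdot\mu]$ contains $C[0,\omega]\cong c_0$ as a complemented summand, and the standard absorption $c_0 \oplus \mathbb{K}^n \cong c_0$ then gives the claim. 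Plugging in and absorbing the additional finite-dimensional $\mu=0$ term $C[0,n]=\mathbb{K}^{n+1}$ in the same manner will yield $Y_n\cong Z$ for every $n$.

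The hardest point is this absorption identity $C[0,\omega\cdot\mu+n]\cong C[0,\omega\cdot\mu]$---morally obvious because adding finitely many isolated points at the top ought not to change anything, but a clean argument really does require the clopenness of $[0,\omega]$ in $[0,\omega\cdot\mu]$ to produce $c_0$ as a summand. The remaining steps amount to careful bookkeeping with Lemma~\ref{conwaylemma} and the commutativity and associativity of the $c_0$-sum.
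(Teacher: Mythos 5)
Your proof is correct, but it takes a genuinely different route from the paper's. The paper's proof is a three-line interchange of summation: it cites the (non-trivial) fact that $C[0,\alpha]$ is isomorphic to $c_0(\N,C[0,\alpha])$ for every $\alpha\in[\omega,\omega_1)$ with Banach--Mazur distance bounded uniformly in~$\alpha$ (Rosenthal's survey, Theorem~2.24), and then computes $c_0(\N,C_0(L_0))\cong\bigl(\bigoplus_{\alpha<\omega_1} c_0(\N,C[0,\alpha])\bigr)_{c_0}\cong\bigl(\bigoplus_{\alpha<\omega_1} C[0,\alpha]\bigr)_{c_0}\cong C_0(L_0)$. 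You instead keep each summand intact and repartition the index set: you split $[0,\omega_1)$ into the $\omega$ cofinal blocks $A_n$ given by ordinal division, identify every block sum $Y_n$ with a single space $Z$ by absorbing finitely many isolated points into a complemented $c_0$-summand, and finish with $\N\times\N\cong\N$. What your approach buys is self-containedness: the only non-isometric ingredient is $C[0,\omega]\cong c\cong c_0$, whereas the paper leans on the Bessaga--Pe\l{}czy\'nski-type classification result behind the cited theorem. The one point you must make explicit is uniformity: to pass from the componentwise isomorphisms $C[0,\omega\cdot\mu+n]\cong C[0,\omega\cdot\mu]$ (and then $Y_n\cong Z$) to an isomorphism of the $c_0$-sums, the Banach--Mazur distances must be bounded independently of~$\mu$ and of~$n$. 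This does hold for your construction --- the clopen splittings are isometric, $c_0\oplus_\infty\mathbb{K}^n\cong c_0$ is isometric under a relabelling of the index set, and the swap between $c$ and $c_0$ costs a fixed constant --- but it is exactly the uniformity issue the paper is careful to flag, and the argument is incomplete without a sentence saying so.
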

\begin{proof}
  It is well known that~$C[0,\alpha]$ is isomorphic to
  $c_0(\N,C[0,\alpha])$ for each $\alpha\in[\omega,\omega_1)$, with
  the Banach--Mazur distance bounded uniformly in~$\alpha$
  (\emph{e.g.}, see~\cite[Theorem~2.24]{rosen}). Hence,
  by~\eqref{C0L0asc0dirsum}, we have
  \[ c_0(\N, C_0(L_0))\cong
  \biggl(\bigoplus_{\alpha<\omega_1}c_0(\N,C[0,\alpha])\biggr)_{c_0}\cong
  \biggl(\bigoplus_{\alpha<\omega_1}C[0,\alpha]\biggr)_{c_0}\cong
  C_0(L_0), \] as desired.
\end{proof}

\begin{lemma}\label{c0sumHilbertgenerated}
  Let $(X_j)_{j\in J}$ be a family of Hilbert-generated Banach
  spaces. Then the Banach space $\bigl(\bigoplus_{j\in J}
  X_j\bigr)_{c_0}$ is Hilbert-generated.
\end{lemma}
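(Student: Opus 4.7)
The plan is to construct a Hilbert space $H$ and a bounded operator $R\colon H\to\bigl(\bigoplus_{j\in J}X_j\bigr)_{c_0}$ with dense range. For each $j\in J$, since $X_j$ is Hilbert-generated, we can choose a Hilbert space $H_j$ together with an operator $R_j\colon H_j\to X_j$ whose range is dense in $X_j$; rescaling, we may arrange that $\|R_j\|\le 1$. Then we form the Hilbertian direct sum $H = \bigl(\bigoplus_{j\in J}H_j\bigr)_{\ell_2}$ and define $R$ coordinatewise by $R((h_j)_{j\in J}) = (R_jh_j)_{j\in J}$.

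Three things then need to be checked: (i) the image of $R$ actually lies in the $c_0$-direct sum, not merely in the $\ell_\infty$-direct sum; (ii) $R$ is bounded; and (iii) the range of $R$ is dense. For (i), given $(h_j)_{j\in J}\in H$ the series $\sum_{j\in J}\|h_j\|^2$ converges, so for each $\epsilon>0$ the set $\{j\in J:\|h_j\|\ge\epsilon\}$ is finite; since each $R_j$ is a contraction, we obtain the inclusion $\{j\in J:\|R_jh_j\|\ge\epsilon\}\subseteq\{j\in J:\|h_j\|\ge\epsilon\}$, which is therefore also finite. For (ii), one has $\|R((h_j)_{j\in J})\| = \sup_{j\in J}\|R_jh_j\|\le\sup_{j\in J}\|h_j\|\le\|(h_j)_{j\in J}\|_{\ell_2}$.

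For (iii), the finitely supported elements are dense in $\bigl(\bigoplus_{j\in J}X_j\bigr)_{c_0}$, so it suffices to approximate any $(x_j)_{j\in J}$ supported on a finite set $F\subseteq J$. For each $j\in F$ the density of $R_j[H_j]$ in $X_j$ allows us to pick $h_j\in H_j$ with $\|R_jh_j-x_j\|$ as small as desired; extending by zero for $j\notin F$ produces a finitely supported, hence square-summable, element $(h_j)_{j\in J}\in H$ whose image under $R$ approximates $(x_j)_{j\in J}$.

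I do not expect any genuine obstacle: the construction is parallel to the two-summand case already used in the proof of Proposition~\ref{hilbertideal}\romanref{hilbertideal1}. The only point that requires the slightest care is the passage from the $\ell_2$-domain to the $c_0$-codomain, which is precisely what the uniform contractivity of the $R_j$ together with the $\ell_2$-summability of the coordinates of elements of $H$ delivers in step~(i).
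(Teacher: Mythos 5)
Your proof is correct and follows essentially the same route as the paper's: both choose contractive operators $R_j\colon H_j\to X_j$ with dense range and assemble them coordinatewise into a contractive operator from the $\ell_2$-direct sum of the $H_j$ onto a dense subspace of the $c_0$-direct sum of the $X_j$. You merely spell out the verifications (image lands in the $c_0$-sum, boundedness, density) that the paper leaves implicit.
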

\begin{proof}
  For each $j\in J$, choose a Hilbert space $H_j$ and a contractive
  operator $T_j\colon H_j\to X_j$ with dense range. The formula
  $(x_j)_{j\in J}\mapsto (T_jx_j)_{j\in J}$ then defines a contractive
  operator from the Hilbert space $\bigl(\bigoplus_{j\in J}
  H_j\bigr)_{\ell_2}$ onto a dense subspace of~$\bigl(\bigoplus_{j\in
    J} X_j\bigr)_{c_0}$.
\end{proof}

\begin{corollary}\label{L0uniformE} The Banach space $C_0(L_0)$ is 
  Hilbert-generated, and the one-point compactification of~${L}_0$ is
  therefore uniformly Eberlein compact.
\end{corollary}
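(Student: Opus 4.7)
The strategy is to reduce the Hilbert-generation of $C_0(L_0)$ to that of its ``pieces'' $C[0,\alpha]$ via the decomposition $\eqref{C0L0asc0dirsum}$ and Lemma~\ref{c0sumHilbertgenerated}, then deduce the topological statement about $\widetilde{L_0}$ from Theorem~\ref{bsbrwThm}.

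First I would verify that $C[0,\alpha]$ is Hilbert-generated for every countable ordinal~$\alpha$. Since $[0,\alpha]$ is a countable compact Hausdorff space, it is metrizable (for instance because it admits a countable base, or by embedding into the unit interval), so $C[0,\alpha]$ is a separable Banach space. It is a standard observation that every separable Banach space~$X$ is Hilbert-generated: given a dense sequence $(x_n)_{n\in\N}$ in the unit ball of~$X$, the formula
\[ T\colon\ (a_n)_{n\in\N}\mapsto \sum_{n\in\N}\frac{a_n}{n}x_n,\quad \ell_2\to X, \]
defines, via the Cauchy--Schwarz inequality, a bounded operator whose range contains each vector $x_n/n$, and therefore has dense span.

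With this in hand, the isomorphism \eqref{C0L0asc0dirsum} combined with Lemma~\ref{c0sumHilbertgenerated} applied to the family $(C[0,\alpha])_{\alpha<\omega_1}$ immediately yields that $C_0(L_0)$ is Hilbert-generated, which is the first half of the statement.

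For the second half, note that $C(\widetilde{L_0})$ is the direct sum of the Hilbert-generated space $C_0(L_0)$ and the one-dimensional subspace spanned by the constant function~$\mathbf{1}$; adjoining one extra generator to the Hilbert space producing a dense range in~$C_0(L_0)$ shows that $C(\widetilde{L_0})$ itself is Hilbert-generated. The equivalence of conditions~\alphref{bsbrwThm1} and~\alphref{bsbrwThm4} of Theorem~\ref{bsbrwThm} then yields that $\widetilde{L_0}$ is uniformly Eberlein compact. The only mildly technical step in this chain is the opening observation that countable compact ordinal intervals are metrizable (hence $C[0,\alpha]$ separable); everything else is a direct application of the machinery already developed in this section.
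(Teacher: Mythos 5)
Your proposal is correct and follows essentially the same route as the paper: combine the decomposition \eqref{C0L0asc0dirsum} with Lemma~\ref{c0sumHilbertgenerated}, using that $C[0,\alpha]$ is separable and hence Hilbert-generated for countable~$\alpha$, and then invoke Theorem~\ref{bsbrwThm} for the second part. The extra details you supply (metrizability of countable compact ordinal intervals, the explicit $\ell_2\to X$ operator witnessing that separable spaces are Hilbert-generated, and the passage from $C_0(L_0)$ to $C(\widetilde{L_0})$) are all sound and merely flesh out steps the paper leaves implicit.
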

\begin{proof} The first statement follows immediately
  from~\eqref{C0L0asc0dirsum} and Lemma~\ref{c0sumHilbertgenerated}
  because $C[0,\alpha]$ is separable and thus Hilbert-generated for
  each countable ordinal~$\alpha$.  Theorem~\ref{bsbrwThm} then
  implies the second part.
\end{proof}

By contrast, we have the following well-known result
for~$C_0[0,\omega_1)$.

\begin{theorem}\label{w1notUEC} The  Banach space~$C_0[0,\omega_1)$ does 
  not embed in any weakly compact\-ly gene\-rated Banach space.
\end{theorem}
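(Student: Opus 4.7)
The plan is to derive a contradiction from the assumption that $C_0[0,\omega_1)$ embeds in a weakly compactly generated Banach space~$Y$, by forcing $B_{C_0[0,\omega_1)^*}$ to be Eberlein compact in the weak$^*$ topology, which we rule out by exhibiting a closed copy of~$[0,\omega_1]$ inside it.

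First I would produce this copy. The assignment $\alpha\mapsto\delta_\alpha$ for $\alpha<\omega_1$ together with $\omega_1\mapsto 0$ is a homeomorphism of~$[0,\omega_1]$ onto the weak$^*$-closed subset $\{\delta_\alpha:\alpha<\omega_1\}\cup\{0\}$ of $B_{C_0[0,\omega_1)^*}$: for each $f\in C_0[0,\omega_1)$, continuity of~$f$ at the limit ordinals and the fact that~$f$ vanishes at infinity give $\langle f,\delta_\beta\rangle=f(\beta)\to f(\alpha)=\langle f,\delta_\alpha\rangle$ as $\beta\nearrow\alpha<\omega_1$ and $\langle f,\delta_\alpha\rangle\to 0$ as $\alpha\to\omega_1$, so the map is weak$^*$-continuous; it is clearly injective, and hence a homeomorphism since its domain is compact. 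On the other hand $[0,\omega_1]$ is not Eberlein compact, because every Eberlein compact space is Fr\'echet--Urysohn, whereas the point~$\omega_1$ lies in the closure of~$[0,\omega_1)$ without being the limit of any sequence from $[0,\omega_1)$, by the uncountable cofinality of~$\omega_1$.

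Suppose now for contradiction that there exist a weakly compactly generated Banach space~$Y$ and an isomorphism~$j$ from $C_0[0,\omega_1)$ onto a closed subspace of~$Y$. The Amir--Lindenstrauss theorem (the WCG/Eberlein counterpart of Theorem~\ref{charUECandHG}) asserts that $(B_{Y^*}, w^*)$ is Eberlein compact. The adjoint $j^*\colon Y^*\to C_0[0,\omega_1)^*$ is weak$^*$-continuous, and the Hahn--Banach theorem (applied on the range of~$j$) shows that $j^*(\|j^{-1}\|\,B_{Y^*})$ is a weak$^*$-compact set containing $B_{C_0[0,\omega_1)^*}$. Since Eberlein compactness passes to continuous images and to closed subspaces, $B_{C_0[0,\omega_1)^*}$, and with it its closed subset $\{\delta_\alpha:\alpha<\omega_1\}\cup\{0\}\cong[0,\omega_1]$, would be Eberlein compact, contradicting the previous paragraph.

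The argument is largely classical; there is no real obstacle beyond citing the appropriate off-the-shelf facts---namely the Amir--Lindenstrauss characterization of WCG Banach spaces in terms of weak$^*$-Eberlein compactness of the dual ball, and the stability of the class of Eberlein compacta under continuous images and closed subspaces.
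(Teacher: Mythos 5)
Your argument is correct, but it takes a genuinely different route from the paper's. The paper's proof is a three-line deduction: every weakly compactly generated space is weakly Lindel\"{o}f, this property passes to closed subspaces, and $C_0[0,\omega_1)$ is not weakly Lindel\"{o}f. You instead work on the dual side: Amir--Lindenstrauss gives that the dual ball of a WCG space is weak$^*$-Eberlein compact, the Benyamini--Rudin--Wage theorem (stability of Eberlein compacta under continuous images) transfers this to $B_{C_0[0,\omega_1)^*}$ via the adjoint of the embedding, and the closed copy $\{\delta_\alpha:\alpha<\omega_1\}\cup\{0\}$ of $[0,\omega_1]$ then yields the contradiction, since $[0,\omega_1]$ fails to be Fr\'{e}chet--Urysohn at $\omega_1$ by uncountable cofinality. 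All the steps you cite are genuine theorems and the Hahn--Banach covering argument $j^*(\|j^{-1}\|B_{Y^*})\supseteq B_{C_0[0,\omega_1)^*}$ is right. Two remarks: first, you have in effect reversed the paper's logical order --- the paper deduces Corollary~\ref{omega1notEC} (non-Eberlein-compactness of $[0,\omega_1]$) \emph{from} Theorem~\ref{w1notUEC}, whereas you prove it directly (which is the ``direct proof'' the paper alludes to) and then deduce the theorem; this avoids any circularity but means you cannot simply cite Corollary~\ref{omega1notEC}, as indeed you do not. Second, your route leans on two substantial imported theorems (Amir--Lindenstrauss and Benyamini--Rudin--Wage) where the paper needs only the weak Lindel\"{o}f permanence properties, so the paper's proof is shorter; on the other hand, yours makes explicit the dual-ball mechanism that the rest of the paper exploits (compare Theorem~\ref{charUECandHG} and the proof of Lemma~\ref{lemma0102012}), and so fits naturally with the surrounding machinery.
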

\begin{proof} Every weakly compact\-ly gene\-rated Banach space is
  weakly Lindel\"{o}f (\emph{e.g.}, see
  \cite[Theorem~12.35]{fabianetal}), and this property is inherited by
  closed subspaces. However, $C_0[0,\omega_1)$ is not weakly
  Lindel\"{o}f.
\end{proof}

Combining this result with Amir and Lindenstrauss's theorem that a
compact space $K$ is Eberlein compact if and only if $C(K)$ is weakly
compact\-ly gene\-rated, we obtain the following conclusion, which can
also be proved directly (\emph{e.g.}, see
\cite[Exercises~12.58--59]{fabianetal}).

\begin{corollary}\label{omega1notEC}
The ordinal interval~$[0,\omega_1]$ is not Eberlein compact.
\end{corollary}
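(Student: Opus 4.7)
The plan is a short proof by contradiction that directly combines the two inputs named in the statement. Suppose, towards a contradiction, that $[0,\omega_1]$ is Eberlein compact. By the Amir--Lindenstrauss theorem, this is equivalent to saying that the Banach space~$C[0,\omega_1]$ is weakly compactly generated.

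Next, I would invoke the observation already made in the introduction that~$C_0[0,\omega_1)$ is isometrically isomorphic to the hyperplane \[ \bigl\{f\in C[0,\omega_1] : f(\omega_1)=0\bigr\} \] of~$C[0,\omega_1]$. In particular, $C_0[0,\omega_1)$ embeds (as a closed subspace) in the weakly compactly generated Banach space~$C[0,\omega_1]$. This, however, contradicts Theorem~\ref{w1notUEC}, which asserts that~$C_0[0,\omega_1)$ does not embed in any weakly compactly generated Banach space.

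There is no real obstacle: the only point to check is that the hyperplane identification places $C_0[0,\omega_1)$ inside~$C[0,\omega_1]$, which is immediate from the definition. Everything else is simply quoting Amir--Lindenstrauss in one direction and Theorem~\ref{w1notUEC}.
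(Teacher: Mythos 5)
Your proof is correct and is precisely the argument the paper intends: combine Theorem~\ref{w1notUEC} with the Amir--Lindenstrauss characterization, using the hyperplane embedding of $C_0[0,\omega_1)$ into $C[0,\omega_1]$ to derive the contradiction. Nothing further is needed.
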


\subsection*{Club subsets}
A subset~$D$ of~$[0,\omega_1)$ such that $D$ is closed and unbounded
is a \emph{club subset}. Hence~$D$ is a club subset if and only if $D$
is uncountable and $D\cup\{\omega_1\}$ is closed in~$[0,\omega_1]$.
The collection
\[ \mathscr{D} = \bigl\{ D\subseteq[0,\omega_1) : D\ \text{contains a
  club subset of}\ [0,\omega_1)\bigr\}\] is a filter on the
set~$[0,\omega_1)$, and $\mathscr{D}$ is countably complete, in the
sense that $\bigcap\mathscr{C}$ belongs to~$\mathscr{D}$ for each
countable subset~$\mathscr{C}$ of~$\mathscr{D}$.

The following lemma is a variant of
\cite[Lemma~1.1(c)--(d)]{alspachbenyamini}, tailored to suit our
applications. Its proof is fairly straightforward, so we omit the
details.

\begin{lemma}\label{phiDlemma07082012}
  Let $D$ be a club subset of~$[0,\omega_1)$.
  \begin{romanenumerate}
  \item\label{phiDlemma07082012iii} The order isomorphism
    $\psi_D\colon[0,\omega_1)\to D$ is a homeomorphism, and hence the
    composition operator $U_D\colon g\mapsto g\circ\psi_D$ is an
    isometric isomorphism of~$C_0(D)$ onto~$C_0[0,\omega_1)$.
  \item\label{phiDlemma07082012ii} The mapping
  \begin{equation}\label{phiDlemma07082012iiEq1} \pi_D\colon\
    \alpha\mapsto\min\bigl(D\cap[\alpha,\omega_1)\bigr),\quad
    [0,\omega_1)\to D, \end{equation} 
  is an increasing retraction, and hence the
  composition operator $S_D\colon g\mapsto g\circ\pi_D$ is a
  linear isometry of~$C_0(D)$ into~$C_0[0,\omega_1)$.
\item\label{phiDlemma07082012i} Let $\iota_D\colon D\to[0,\omega_1)$
  denote the inclusion mapping. Then the composition operator
  $R_D\colon f\mapsto f\circ\iota_D$ is a linear contraction
  of~$C_0[0,\omega_1)$ into~$C_0(D)$, and $R_DS_D = I_{C_0(D)}$.
 \item\label{phiDlemma07082012iv} The operator $P_D = S_DR_D$ is
   a contractive projection on~$C_0[0,\omega_1)$ such that
  \begin{equation}\label{phiDlemma07082012eq2}
    \ker P_D = \bigl\{f\in C_0[0,\omega_1): f(\alpha) =0\ (\alpha\in
      D)\bigr\},
  \end{equation} and the
  range of~$P_D$,
  \begin{equation}\label{phiDlemma07082012eq3} 
    \mathscr{R}_D= \bigl\{
    f|_D\circ \pi_D: f\in C_0[0,\omega_1)\bigr\},
   \end{equation} is isometrically isomorphic
   to~$C_0[0,\omega_1)$.
 \item\label{phiDlemma07082012v} Suppose that
   $D\neq[0,\omega_1)$. Then $[0,\omega_1)\setminus D =
   \bigcup_{\alpha<\gamma} [\xi_\alpha,\eta_\alpha)$ for some ordinal
   \mbox{$\gamma\in[1,\omega_1]$} and some sequences
   $(\xi_\alpha)_{\alpha< \gamma}$ and $(\eta_\alpha)_{\alpha<
     \gamma}$, where $\xi_\alpha$ is either~$0$ or a countable
   successor ordinal, $\eta_\alpha\in D$ and $\xi_\alpha < \eta_\alpha
   <\xi_{\alpha+1}$ for each~$\alpha$, and $\ker P_D$ is isometrically
   isomorphic to~$\bigl(\bigoplus_{\alpha<\gamma}
   C_0[\xi_\alpha,\eta_\alpha)\bigr)_{c_0}$.
\end{romanenumerate}
\end{lemma}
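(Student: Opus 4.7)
The plan is to treat the five parts in order, since each builds on its predecessors.

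For~(i), $D$ has order type $\omega_1$ (being closed and unbounded in $[0,\omega_1)$), and I would observe that the subspace topology on $D$ inherited from $[0,\omega_1)$ coincides with its order topology---this is because closedness of $D$ ensures that suprema of bounded subsets of $D$ remain in $D$. The order isomorphism $\psi_D$ is therefore a homeomorphism, and $U_D$ is the associated isometric isomorphism of $C_0$-spaces via the standard correspondence between homeomorphisms of locally compact Hausdorff spaces and isometric isomorphisms of their $C_0$-algebras.

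For~(ii), $\pi_D(\alpha)$ is well defined because $D\cap[\alpha,\omega_1)$ is a non-empty closed subset of $[0,\omega_1)$, and it is plainly an increasing retraction. The subtle point is that $\pi_D$ itself is \emph{not} continuous, yet I claim $g\circ\pi_D$ is. Since successor ordinals are isolated in $[0,\omega_1)$, it suffices to verify continuity at each limit ordinal $\alpha_0$. When $\alpha_0\in D$, the sandwich $\alpha\leq\pi_D(\alpha)\leq\alpha_0$ for $\alpha\leq\alpha_0$ yields $\pi_D(\alpha)\to\alpha_0$ in $D$; when $\alpha_0\notin D$, closedness of $D$ provides an open interval around $\alpha_0$ on which $\pi_D$ is constant, so $g\circ\pi_D$ is locally constant. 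Vanishing at infinity reduces to noting that $\pi_D^{-1}$ of any bounded subset of $D$ is bounded. Parts~(iii) and~(iv) are then immediate: $R_D$ is contractive and $R_D S_D = I_{C_0(D)}$ because $\pi_D$ fixes $D$ pointwise, from which idempotence of $P_D$ and the stated descriptions of $\ker P_D$ and $\mathscr{R}_D$ follow.

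Part~(v) is the most substantive. I would first decompose $[0,\omega_1)\setminus D$ into its maximal convex subsets, arguing that each such piece has the form $[\xi_\alpha,\eta_\alpha)$ with $\eta_\alpha\in D$ (by closedness plus unboundedness of $D$) and $\xi_\alpha$ equal to $0$ or a successor ordinal (because a limit ordinal in the complement would have a neighborhood disjoint from $D$, contradicting its status as the minimum of a maximal convex piece). Ordering by $\xi_\alpha$ gives the desired enumeration, and $\eta_\alpha<\xi_{\alpha+1}$ since consecutive gaps are separated by at least one point of $D$. For the isomorphism, the map $f\mapsto(f|_{[\xi_\alpha,\eta_\alpha)})_\alpha$ sends $\ker P_D$ into the claimed $c_0$-sum: each restriction lies in $C_0[\xi_\alpha,\eta_\alpha)$ because $f$ vanishes on $D\ni\eta_\alpha$, and the $c_0$ condition follows by an accumulation-point argument inside the compact set $\{x:|f(x)|\geq\epsilon\}$---any accumulation point would either lie in $D$ (forcing $f=0$, contradiction) or inside some single $[\xi_\beta,\eta_\beta)$ (forcing all but finitely many indices to equal $\beta$). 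The inverse pastes together a family $(g_\alpha)$ as $g_\alpha$ on $[\xi_\alpha,\eta_\alpha)$ and $0$ on $D$; continuity at a limit $x\in D$ is verified by using the $c_0$ condition to isolate the finitely many indices with $\|g_\alpha\|\geq\epsilon$, then choosing a neighborhood of $x$ that either avoids all these intervals or intersects only the unique one (if it exists) with $\eta_\alpha=x$, where $g_\alpha\in C_0[\xi_\alpha,x)$ provides the required decay. The map is manifestly an isometry.

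The hardest step is this continuity analysis, especially the reconstruction in~(v) at a limit ordinal $x\in D$ that may be accumulated from above by many $\xi_\alpha$'s or from below by many $\eta_\alpha$'s; in either direction, the $c_0$ assumption cuts the issue down to finitely many intervals, each of which either terminates at $x$ (handled by its own vanishing condition) or is bounded away from $x$ by a positive gap.
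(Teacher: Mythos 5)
Your proof is correct and complete; the paper itself omits the proof of this lemma, dismissing it as a ``fairly straightforward'' variant of Alspach--Benyamini, so there is no argument to compare against, but the route you take---identifying the subspace and order topologies on~$D$ for~(i), direct continuity checks for~(ii)--(iv), and the decomposition of $[0,\omega_1)\setminus D$ into maximal convex components together with the accumulation-point argument for the $c_0$-condition and the pasting argument for surjectivity in~(v)---is exactly the standard one the authors intend, and you have correctly isolated the only points requiring care (the endpoints $\xi_\alpha$, $\eta_\alpha$ of the components and continuity of the pasted function at limit points of~$D$). One small correction: $\pi_D$ \emph{is} continuous---your own case analysis at limit ordinals (convergence to $\alpha_0$ when $\alpha_0\in D$, local constancy when $\alpha_0\notin D$) proves precisely this---so the aside in~(ii) claiming otherwise is false, though harmless, since you verify the continuity of $g\circ\pi_D$ directly.
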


\begin{corollary}\label{OmegaXiLemma10Jan2013ii} 
  For each club subset~$D$ of~$[0,\omega_1)$, $\ker P_D$ is
  Hilbert-generated and iso\-metrically isomorphic to a complemented
  subspace of $C_0(L_0)$.

  Hence the operator $I_{C_0[0,\omega_1)} - P_D$ belongs to the
  ideal~$\mathscr{HG}(C_0[0,\omega_1))$.
\end{corollary}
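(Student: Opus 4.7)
The plan is to combine the explicit decomposition of $\ker P_D$ supplied by Lemma~\ref{phiDlemma07082012}\romanref{phiDlemma07082012v} with the $c_0$-decomposition~\eqref{C0L0asc0dirsum} of $C_0(L_0)$ and Lemma~\ref{c0sumHilbertgenerated}. The edge case $D=[0,\omega_1)$ is trivial: then $\pi_D$ is the identity, so $P_D = I_{C_0[0,\omega_1)}$, $\ker P_D = \{0\}$, and $I_{C_0[0,\omega_1)} - P_D = 0$; every claim is immediate. So from now on I assume $D\neq[0,\omega_1)$.

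By Lemma~\ref{phiDlemma07082012}\romanref{phiDlemma07082012v} there is an isometric isomorphism $\ker P_D \cong \bigl(\bigoplus_{\alpha<\gamma} C_0[\xi_\alpha,\eta_\alpha)\bigr)_{c_0}$ with $\gamma\in[1,\omega_1]$ and each $[\xi_\alpha,\eta_\alpha)$ a countable locally compact ordinal interval. Each summand $C_0[\xi_\alpha,\eta_\alpha)$ is therefore separable, hence Hilbert-generated, and Lemma~\ref{c0sumHilbertgenerated} then delivers the Hilbert-generation of $\ker P_D$.

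For the complemented isometric embedding into $C_0(L_0)$, I would exhibit, for each $\alpha<\gamma$, the summand $C_0[\xi_\alpha,\eta_\alpha)$ as an isometrically embedded complemented subspace of $C[0,\eta_\alpha]$ in two stages. Because $\xi_\alpha$ is $0$ or a countable successor, $[\xi_\alpha,\eta_\alpha]$ is clopen in $[0,\eta_\alpha]$, so $C[\xi_\alpha,\eta_\alpha]$ is isometrically a direct summand of $C[0,\eta_\alpha]$. Inside $C[\xi_\alpha,\eta_\alpha]$ the subspace $C_0[\xi_\alpha,\eta_\alpha)$ is either a direct summand (if $\eta_\alpha$ is a successor, via the clopen split $[\xi_\alpha,\eta_\alpha]=[\xi_\alpha,\eta_\alpha-1]\sqcup\{\eta_\alpha\}$) or the kernel of evaluation at $\eta_\alpha$ (if $\eta_\alpha$ is a limit), in which latter case the complement is spanned by the constant function~$\mathbf{1}$ and the resulting projection has norm at most~$2$. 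Since the $\eta_\alpha$'s are strictly increasing and thus pairwise distinct, and $C_0(L_0)\cong\bigl(\bigoplus_{\beta<\omega_1}C[0,\beta]\bigr)_{c_0}$ by~\eqref{C0L0asc0dirsum}, routing the $\alpha$-th summand into the $\eta_\alpha$-th coordinate of the right-hand $c_0$-sum assembles the componentwise isometric embeddings together with their uniformly bounded projections into an isometric embedding of $\ker P_D$ as a complemented subspace of $C_0(L_0)$.

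For the final assertion, the fact that $P_D$ is a projection forces the range of $I_{C_0[0,\omega_1)} - P_D$ to equal $\ker P_D$, which by the above is a closed Hilbert-generated subspace of $C_0[0,\omega_1)$; this is precisely what is required for $I_{C_0[0,\omega_1)} - P_D$ to belong to $\mathscr{HG}(C_0[0,\omega_1))$. The main place needing care will be the case analysis in the complementation of $C_0[\xi_\alpha,\eta_\alpha)$ inside $C[\xi_\alpha,\eta_\alpha]$ and in verifying that the uniform bound on projection norms survives the passage to the $c_0$-sum; everything else is a clean reduction to previously established results from the excerpt.
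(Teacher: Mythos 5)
Your argument is correct and follows essentially the same route as the paper's own (very terse) proof: reduce via Lemma~\ref{phiDlemma07082012}\romanref{phiDlemma07082012v} to the $c_0$-sum $\bigl(\bigoplus_{\alpha<\gamma}C_0[\xi_\alpha,\eta_\alpha)\bigr)_{c_0}$, then invoke Lemma~\ref{c0sumHilbertgenerated} and the decomposition~\eqref{C0L0asc0dirsum}. The only difference is that you spell out the coordinatewise complemented isometric embedding of each $C_0[\xi_\alpha,\eta_\alpha)$ into $C[0,\eta_\alpha]$ --- a step the paper leaves implicit --- and your case analysis together with the uniform bound of~$2$ on the projection norms is correct.
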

\begin{proof}
  The result is trivial if $D = [0,\omega_1)$ because $\ker P_D =
  \{0\}$ in this case. Otherwise
  Lemma~\ref{phiDlemma07082012}\romanref{phiDlemma07082012v} applies,
  and the conclusions follow using Lemma~\ref{c0sumHilbertgenerated}
  and~\eqref{C0L0asc0dirsum}.
\end{proof}

\begin{corollary}\label{OmegaXiLemma10Jan2013iii} 
  There exists a club subset~$D$ of~$[0,\omega_1)$ such that $\ker
  P_D$ is isometrically isomorphic to~$C_0(L_0)$.
\end{corollary}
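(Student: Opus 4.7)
The plan is to build $D$ by transfinite recursion so that the gaps $[\xi_\alpha,\eta_\alpha)$ provided by Lemma~\ref{phiDlemma07082012}\romanref{phiDlemma07082012v} are order-isomorphic (hence homeomorphic) to the compact ordinal interval $[0,\alpha]$, one for each countable ordinal $\alpha$. Each such gap will then be compact, and the order isomorphism $[\xi_\alpha,\eta_\alpha)\to[0,\alpha]$ will induce an isometric isomorphism of $C_0[\xi_\alpha,\eta_\alpha)$ onto $C[0,\alpha]$. Combining Lemma~\ref{phiDlemma07082012}\romanref{phiDlemma07082012v} with~\eqref{C0L0asc0dirsum} will then yield
\[ \ker P_D\ \cong\ \biggl(\bigoplus_{\alpha<\omega_1}C[0,\alpha]\biggr)_{c_0}\ \cong\ C_0(L_0) \]
isometrically, which is exactly what we want.

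For the recursion I would set $\xi_0=0$ and declare $\eta_\alpha=\xi_\alpha+\alpha+1$ at every stage; at a successor step put $\xi_{\alpha+1}=\eta_\alpha+1$, and at a limit $\alpha$ put $\xi_\alpha=\bigl(\sup_{\gamma<\alpha}\eta_\gamma\bigr)+1$. Because only countable ordinals are ever added, every $\xi_\alpha$ and $\eta_\alpha$ remains below $\omega_1$ and the recursion is well defined. By construction each $\eta_\alpha$ is a countable successor ordinal, each $\xi_\alpha$ is either $0$ or a countable successor ordinal, and $\xi_\alpha<\eta_\alpha<\xi_{\alpha+1}$, so the ingredients match the form described in Lemma~\ref{phiDlemma07082012}\romanref{phiDlemma07082012v}; moreover $[\xi_\alpha,\eta_\alpha)=[\xi_\alpha,\xi_\alpha+\alpha]$ is visibly order-isomorphic to $[0,\alpha]$. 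I would then set $D=[0,\omega_1)\setminus\bigcup_{\alpha<\omega_1}[\xi_\alpha,\eta_\alpha)$.

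What remains, and is the main thing to check carefully, is that $D$ really is a club and that the $(\xi_\alpha,\eta_\alpha)$ above coincide with the canonical decomposition of $[0,\omega_1)\setminus D$ supplied by the lemma. Unboundedness of $D$ is immediate since $\eta_\alpha>\alpha$. For closedness, any accumulation point of $D$ in $[0,\omega_1)$ is a countable limit ordinal $\mu$, and $\mu$ cannot lie in a gap $[\xi_\alpha,\eta_\alpha)$: indeed $\mu>\xi_\alpha$ because $\xi_\alpha$ is $0$ or a successor while $\mu$ is a limit, so a neighbourhood $(\nu,\mu]\subseteq[\xi_\alpha,\eta_\alpha)$ with $\xi_\alpha\le\nu<\mu$ would avoid $D$ entirely, contradicting the choice of $\mu$. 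Uniqueness of the maximal-interval decomposition of the open set $[0,\omega_1)\setminus D$ forces it to agree with $\{[\xi_\alpha,\eta_\alpha):\alpha<\omega_1\}$, and at this point the calculation in the first paragraph finishes the proof.
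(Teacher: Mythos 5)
Your proposal is correct and follows essentially the same route as the paper: both remove from $[0,\omega_1)$ a disjoint family of intervals of order type $\alpha+1$, one for each $\alpha<\omega_1$, and then invoke Lemma~\ref{phiDlemma07082012}\romanref{phiDlemma07082012v} together with~\eqref{C0L0asc0dirsum}. The only difference is presentational: the paper compresses your successor/limit recursion into the single formula $\xi_\alpha=\sup_{\beta<\alpha}(\xi_\beta+\beta)+2$ and leaves the club verification to the reader, which you carry out explicitly.
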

\begin{proof}
  We can inductively define a transfinite sequence
  $(\xi_\alpha)_{\alpha<\omega_1}$ of countable ordinals by $\xi_0 =
  0$ and $\xi_\alpha = \sup_{\beta<\alpha}(\xi_\beta+\beta)+2$ for
  each $\alpha\in[0,\omega_1)$.  Then $D =
  [0,\omega_1)\setminus\bigcup_{\alpha<\omega_1}[\xi_\alpha,\xi_\alpha+\alpha]$
  is a proper club subset of~$[0,\omega_1)$.  In the notation of
  Lemma~\ref{phiDlemma07082012}\romanref{phiDlemma07082012v}, we have
  $\gamma = \omega_1$ and $\eta_\alpha = \xi_\alpha+\alpha+1$ for each
  $\alpha<\omega_1$, and hence
  \[ \ker P_D\cong \biggl(\bigoplus_{\alpha<\omega_1}
  C_0[\xi_\alpha,\eta_\alpha)\biggr)_{c_0}\cong
  \biggl(\bigoplus_{\alpha<\omega_1} C[0,\alpha]\biggr)_{c_0}\cong
  C_0(L_0) \] by~\eqref{C0L0asc0dirsum}, as desired.
\end{proof}

\begin{lemma}\label{intersections} Let $D$ and $E$ be club subsets
  of~$[0,\omega_1)$. Then
  \[ \ker P_D\cap \ker P_E = \ker P_{D\cup E}\qquad
  \text{and}\qquad \mathscr{R}_{D}\cap
  \mathscr{R}_{E}=\mathscr{R}_{D\cap E}. \]
\end{lemma}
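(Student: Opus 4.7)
The two identities are quite different in nature, so I would handle them separately, first noting that both $D\cup E$ and $D\cap E$ are themselves club subsets of $[0,\omega_1)$: closedness is immediate, while unboundedness of the intersection is the classical alternating-sequence argument (which in fact anticipates the main step below). The first identity is then essentially tautological once one unwinds the description of $\ker P_D$ from~\eqref{phiDlemma07082012eq2}: a function $f\in C_0[0,\omega_1)$ lies in $\ker P_D\cap\ker P_E$ precisely when $f|_D=0$ and $f|_E=0$, equivalently when $f$ vanishes on $D\cup E$, which is $\ker P_{D\cup E}$.

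For the second identity, I would begin from the convenient reformulation of~\eqref{phiDlemma07082012eq3}, namely that $f\in\mathscr{R}_D$ if and only if $f=f\circ\pi_D$ (immediate because $\pi_D$ is a retraction onto $D$). The easy inclusion $\mathscr{R}_{D\cap E}\subseteq \mathscr{R}_D\cap\mathscr{R}_E$ then reduces to the identity $\pi_{D\cap E}\circ\pi_D=\pi_{D\cap E}$ (and symmetrically for $E$): since $\pi_{D\cap E}(\alpha)\in D$ lies above $\alpha$, the minimality defining $\pi_D$ gives $\pi_D(\alpha)\leq\pi_{D\cap E}(\alpha)$, and monotonicity together with idempotency of $\pi_{D\cap E}$ supplies the reverse inequality. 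Thus $f=f\circ\pi_{D\cap E}$ yields $f\circ\pi_D=f\circ\pi_{D\cap E}\circ\pi_D=f\circ\pi_{D\cap E}=f$, and symmetrically for $E$.

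The main obstacle is the reverse inclusion $\mathscr{R}_D\cap\mathscr{R}_E\subseteq\mathscr{R}_{D\cap E}$. Given $f$ on the left and $\alpha\in[0,\omega_1)$, my plan is to form the alternating sequence $\alpha_0=\alpha$, $\alpha_{2n+1}=\pi_D(\alpha_{2n})$, $\alpha_{2n+2}=\pi_E(\alpha_{2n+1})$. This sequence is non-decreasing and bounded above by $\pi_{D\cap E}(\alpha)$ (by induction, using that $\pi_{D\cap E}(\alpha)$ is a common fixed point of $\pi_D$ and $\pi_E$), and the two fixed-point conditions give $f(\alpha_n)=f(\alpha)$ for every $n$. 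Its supremum $\gamma$ lies in $D\cap E$: the odd-indexed subsequence lies in $D$ and the positive even-indexed subsequence lies in $E$, so closedness of both forces $\gamma\in D\cap E$. Since $\gamma\geq\alpha$ and $\gamma\leq\pi_{D\cap E}(\alpha)$, the minimality defining $\pi_{D\cap E}(\alpha)$ gives $\gamma=\pi_{D\cap E}(\alpha)$, and continuity of $f$ yields $f(\pi_{D\cap E}(\alpha))=f(\gamma)=f(\alpha)$, completing the verification.
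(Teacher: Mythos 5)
Your proof is correct and follows essentially the same route as the paper's: the first identity is read off from \eqref{phiDlemma07082012eq2}, the inclusion $\mathscr{R}_{D\cap E}\subseteq\mathscr{R}_{D}\cap\mathscr{R}_{E}$ comes from the retraction identity $\pi_{D\cap E}\circ\pi_D=\pi_{D\cap E}$, and the reverse inclusion from propagating the value of $f$ from $\alpha$ up to $\pi_{D\cap E}(\alpha)$. Your alternating $\omega$-sequence is just an explicit (and perfectly valid) rendering of the ``easy transfinite induction'' that the paper invokes for this last step.
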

\begin{proof} 
  The first identity is an immediate consequence
  of~\eqref{phiDlemma07082012eq2}.

  To verify the second, suppose first that $f\in \mathscr{R}_{D}\cap
  \mathscr{R}_{E}$. Given $\alpha\in[0,\omega_1)$, an easy
  trans\-finite induction shows that $f(\beta) = f(\alpha)$ for each
  $\beta\in [\alpha,\pi_{D\cap E}(\alpha)]$, so that in particular we
  have $f(\alpha) = f(\pi_{D\cap E}(\alpha))$, and hence
  $f\in\mathscr{R}_{D\cap E}$.

  Conversely, for each $\alpha\in[0,\omega_1)$, we see that
  $D\cap[\pi_D(\alpha),\omega_1) = D\cap
  [\alpha,\omega_1)$. Consequently $D\cap
  E\cap[\pi_D(\alpha),\omega_1) = D\cap E\cap[\alpha,\omega_1)$, so
  that $\pi_{D\cap E}(\pi_D(\alpha)) = \pi_{D\cap E}(\alpha)$, and therefore
  \[ (P_DP_{D\cap E}f)(\alpha) = f(\pi_{D\cap E}(\pi_D(\alpha))) =
  f(\pi_{D\cap E}(\alpha)) = (P_{D\cap E}f)(\alpha)\qquad (f\in
  C_0[0,\omega_1)). \] This proves that $\mathscr{R}_{D\cap
    E}\subseteq\mathscr{R}_D$. A similar argument shows that
  $\mathscr{R}_{D\cap E}\subseteq\mathscr{R}_E$.
\end{proof}

\section{The proof of Theorem~\ref{dichotomy}}\label{section3}
\noindent
\begin{lemma}\label{copiesofw1} Let  $\lambda\in\mathbb{K}\setminus\{0\}$
  and $\rho\in C_0[0,\omega_1)^*$. The mapping
  \mbox{$\sigma_{\lambda,\rho}\colon [0,\omega_1]\to C_0[0,\omega_1)^*$}
  given by $\sigma_{\lambda,\rho}(\alpha) = \lambda\delta_\alpha+\rho$
  for $\alpha<\omega_1$ and $\sigma_{\lambda,\rho}(\omega_1)=\rho$ is then
  injective and continuous with respect to the weak$^*$ topology on
  its codomain.

  Hence its range, which is equal to $\{\lambda\delta_\alpha+\rho:
  \alpha<\omega_1\}\cup\{\rho\}$, is homeomorphic to $[0,\omega_1]$.
\end{lemma}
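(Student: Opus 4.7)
The plan is to verify injectivity and weak$^*$ continuity of $\sigma_{\lambda,\rho}$ directly, and then invoke the standard compact-to-Hausdorff argument to upgrade this to a homeomorphism.

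First I would establish injectivity by a short case analysis. For distinct $\alpha,\beta\in[0,\omega_1)$, the Rudin representation \eqref{dualofC0isl1} shows that the Dirac measures $\delta_\alpha$ and $\delta_\beta$ are linearly independent, so $\lambda(\delta_\alpha-\delta_\beta)\neq 0$ since $\lambda\neq 0$; thus $\sigma_{\lambda,\rho}(\alpha)\neq\sigma_{\lambda,\rho}(\beta)$. Comparing $\alpha<\omega_1$ with $\omega_1$, we need $\lambda\delta_\alpha\neq 0$, which again holds because $\lambda\neq 0$.

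Next I would verify weak$^*$ continuity by fixing $f\in C_0[0,\omega_1)$ and showing that the scalar-valued map
\[ \alpha\mapsto\langle f,\sigma_{\lambda,\rho}(\alpha)\rangle \]
is continuous on $[0,\omega_1]$. On $[0,\omega_1)$ this equals $\lambda f(\alpha)+\langle f,\rho\rangle$, which is continuous because $f$ is, while at $\omega_1$ the value is $\langle f,\rho\rangle$. Continuity at $\omega_1$ thus amounts to $\lim_{\alpha\to\omega_1}f(\alpha)=0$, and this is exactly the defining feature of $C_0[0,\omega_1)$: for each $\epsilon>0$ the set $\{\alpha<\omega_1:|f(\alpha)|\ge\epsilon\}$ is compact in $[0,\omega_1)$ and hence bounded by some countable ordinal. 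Since the collection of such maps $f$ separates points of the dual and determines the weak$^*$ topology, this yields weak$^*$ continuity of $\sigma_{\lambda,\rho}$.

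Finally, since $[0,\omega_1]$ is compact and the weak$^*$ topology on $C_0[0,\omega_1)^*$ is Hausdorff, any continuous injection from the former to the latter is automatically a closed mapping, and therefore a homeomorphism onto its image. Applied to $\sigma_{\lambda,\rho}$, this identifies the range $\{\lambda\delta_\alpha+\rho:\alpha<\omega_1\}\cup\{\rho\}$ with $[0,\omega_1]$, as claimed. The only mildly delicate point is the continuity at the limit $\omega_1$, but this is just the $C_0$ vanishing property; no further technical ingredient is needed.
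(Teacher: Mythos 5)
Your proof is correct and follows essentially the same route as the paper: the paper simply cites as ``well known'' the continuity and injectivity of $\alpha\mapsto\delta_\alpha$ (extended by $0$ at $\omega_1$) and transfers it to $\sigma_{\lambda,\rho}$ via the weak$^*$ continuity of the vector-space operations, whereas you verify the pointwise continuity against each $f\in C_0[0,\omega_1)$ directly; the final compact-to-Hausdorff step is identical.
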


\begin{proof} It is well known that the mapping
  $\tau\colon[0,\omega_1]\to C_0[0,\omega_1)^*$ given by $\tau(\alpha)
  = \delta_\alpha$ for $\alpha\in [0,\omega_1)$ and $\tau(\omega_1) =
  0$ is a continuous injection with respect to the weak$^*$ topology
  on its codomain, and hence the same is true for
  $\sigma_{\lambda,\rho}$ because $\lambda\neq 0$ and the vector-space
  operations are weakly$^*$ continuous.  The final clause now follows
  because~$[0,\omega_1]$ is compact, and the weak$^*$ topology
  on~$C_0[0,\omega_1)^*$ is Hausdorff.
\end{proof}

\begin{definition}
  A subset~$S$ of~$[0,\omega_1)$ is \emph{stationary} if $S\cap
  D\neq\emptyset$ for each club subset~$D$ of~$[0,\omega_1)$.
\end{definition}

Stationary sets have many interesting topological and combinatorial
properties, as indicated in~\cite{jech} and~\cite{kunen}, for
instance. We shall only require the following result, which is due to
Fodor~\cite{fodor}.

\begin{theorem}[Pressing Down Lemma]\label{fodorsthm}
  Let $S$ be a stationary subset of~$[0,\omega_1)$, and let \mbox{$f\colon
  S\to[0,\omega_1)$} be a function which satisfies $f(\alpha)<\alpha$
  for each $\alpha\in S$.  Then~$S$ contains a subset~$S'$ which is
  stationary and for which $f|_{S'}$ is constant.
\end{theorem}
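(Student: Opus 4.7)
The plan is to prove the Pressing Down Lemma by contradiction, so assume that for every $\beta\in[0,\omega_1)$ the fibre $S_\beta=\{\alpha\in S:f(\alpha)=\beta\}$ is non-stationary. For each such~$\beta$, choose a club $C_\beta\subseteq[0,\omega_1)$ disjoint from~$S_\beta$. The goal is to amalgamate the family $(C_\beta)_{\beta<\omega_1}$ into a single club that witnesses the non-stationarity of~$S$ itself; the standard device for this is the \emph{diagonal intersection}
\[
\Delta_{\beta<\omega_1}C_\beta=\bigl\{\alpha<\omega_1:\alpha\in C_\beta\text{ for every }\beta<\alpha\bigr\}.
\]

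The key step, and the main technical obstacle, is to verify that $\Delta_{\beta<\omega_1}C_\beta$ is a club subset of~$[0,\omega_1)$. For closedness, let $\gamma<\omega_1$ be a non-zero ordinal that is the supremum of a subset~$X\subseteq\Delta_{\beta<\omega_1}C_\beta$ cofinal in~$\gamma$; for any $\beta<\gamma$ the tail $\{\alpha\in X:\alpha>\beta\}$ is still cofinal in~$\gamma$ and contained in~$C_\beta$ by the defining property of the diagonal intersection, so closedness of~$C_\beta$ forces $\gamma\in C_\beta$, whence $\gamma\in\Delta_{\beta<\omega_1}C_\beta$. For unboundedness, starting from any $\gamma_0<\omega_1$, I would inductively choose $\gamma_{n+1}>\gamma_n$ with $\gamma_{n+1}\in\bigcap_{\beta\leq\gamma_n}C_\beta$; this intersection is a club by the countable completeness of the filter~$\mathscr{D}$ already noted in the text, and is in particular unbounded. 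The ordinal $\gamma=\sup_n\gamma_n$ is countable, strictly exceeds~$\gamma_0$, and lies in $\Delta_{\beta<\omega_1}C_\beta$ by the closedness half of the argument just given.

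Once $\Delta_{\beta<\omega_1}C_\beta$ is known to be a club, the proof concludes in one line: stationarity of~$S$ produces some $\alpha\in S\cap\Delta_{\beta<\omega_1}C_\beta$ with $\alpha>0$, and since $f(\alpha)<\alpha$ the definition of the diagonal intersection gives $\alpha\in C_{f(\alpha)}$; on the other hand $\alpha\in S_{f(\alpha)}$ trivially, contradicting $C_{f(\alpha)}\cap S_{f(\alpha)}=\emptyset$. This contradiction shows that some $S_\beta$ must have been stationary, whereupon $S'=S_\beta$ is the required constant fibre.

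I expect the unboundedness of the diagonal intersection to be the only genuinely non-trivial point: the witnessing clubs~$C_\beta$ vary with~$\beta$, so one cannot simply intersect all of them—countable completeness of~$\mathscr{D}$ yields a club only from countably many~$C_\beta$. The inductive construction above, which consults only countably many~$C_\beta$ at each stage and then uses closedness to absorb the limit, is the textbook workaround, and it depends essentially on the countable cofinality of~$\omega_1$.
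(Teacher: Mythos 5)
Your proof is correct: it is the standard diagonal-intersection argument for Fodor's lemma (as in Jech or Kunen), and each step — closedness and unboundedness of $\Delta_{\beta<\omega_1}C_\beta$, and the final contradiction via $\alpha\in C_{f(\alpha)}\cap S_{f(\alpha)}$ — is carried out soundly, including the correct use of countable completeness of the club filter at the unboundedness stage. The paper itself offers no proof of this statement, citing Fodor's original article instead, so there is nothing to compare against; your argument fills that gap with the textbook proof.
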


We can now explain how the proof of Theorem~\ref{dichotomy} is
structured: it consists of three parts, set out in the following
lemma.  Theorem~\ref{dichotomy} follows immediately from it, using
Lemma~\ref{copiesofw1}.

\begin{lemma}\label{lemmaSimpleTopDich}
  Let $K$ be a weakly$^*$ compact subset~$K$ of~$C_0[0,\omega_1)^*$.
  \begin{romanenumerate}
  \item\label{lemmaSimpleTopDich1} Exactly one of the following two
    alternatives holds:
    \begin{capromanenumerate}
    \item\label{lemmaSimpleTopDich1a} either there is a club
      subset~$D$ of~$[0,\omega_1)$ such that
      \begin{equation}\label{lemmaSimpleTopDich2Eq1}
        \mu([\alpha,\omega_1)) = 0\qquad (\mu\in K,\,\alpha\in D);
      \end{equation}
    \item\label{lemmaSimpleTopDich1b} or the set
      \begin{equation}\label{lemmaSimpleTopDichEq1} \{
        \alpha\in[0,\omega_1)
        : \mu([\alpha,\omega_1))\neq 0\ \text{\normalfont{for some}}\
        \mu\in K \} \end{equation} 
      is stationary.
    \end{capromanenumerate}
  \item\label{lemmaSimpleTopDich2}
    Condition~\romanref{lemmaSimpleTopDich1a} above is satisfied if
    and only if $K$ is uniformly Eberlein compact.
  \item\label{lemmaSimpleTopDich3}
    Condition~\romanref{lemmaSimpleTopDich1b} above is satisfied if
    and only if there exist $\rho\in K$,
    $\lambda\in\mathbb{K}\setminus\{0\}$, and a club subset~$D$
    of~$[0,\omega_1)$ such that $\rho+\lambda\delta_\alpha\in K$ for
    each $\alpha\in D$.
  \end{romanenumerate}
\end{lemma}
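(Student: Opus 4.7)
My plan is to tackle the three clauses in the order~\romanref{lemmaSimpleTopDich1}, the backward implication of~\romanref{lemmaSimpleTopDich2}, both implications of~\romanref{lemmaSimpleTopDich3}, and finally the forward implication of~\romanref{lemmaSimpleTopDich2}. Writing $A$ for the set displayed in~\eqref{lemmaSimpleTopDichEq1}, clause~\romanref{lemmaSimpleTopDich1} is tautological: either $A$ is non-stationary---in which case $[0,\omega_1)\setminus A$ contains a club $D$ witnessing~\romanref{lemmaSimpleTopDich1a}---or $A$ is stationary, and the two alternatives are mutually exclusive. For the backward direction of~\romanref{lemmaSimpleTopDich2}, the key new observation I would exploit is that, under~\romanref{lemmaSimpleTopDich1a}, the weak$^*$-continuous restriction map $r\colon\mu\mapsto\mu|_{\ker P_D}$ from~$K$ to $(\ker P_D)^*$ is injective. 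First I would identify $(\ker P_D)^\perp$ with the set of measures in $\ell_1(\omega_1)$ whose support lies in~$D$: if $\beta\notin D$, then zero-dimensionality of $[0,\omega_1)$ together with Urysohn supplies a continuous function in $\ker P_D$ that is non-zero at $\beta$. Granted this, for $\mu,\nu\in K$ with $r(\mu)=r(\nu)$ the difference $\eta:=\mu-\nu$ is supported on the increasing enumeration $D=\{\alpha_\beta:\beta<\omega_1\}$; condition~\romanref{lemmaSimpleTopDich1a} forces $\eta([\alpha_\beta,\omega_1))=\sum_{\gamma\ge\beta}\eta(\{\alpha_\gamma\})=0$ for every~$\beta$, so consecutive differencing yields $\eta(\{\alpha_\beta\})=0$ for every~$\beta$, i.e., $\eta=0$. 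Because $K$ is weak$^*$-compact, $r$ is then a weak$^*$-homeomorphism onto $r[K]$; since $\ker P_D$ is Hilbert-generated by Corollary~\ref{OmegaXiLemma10Jan2013ii}, the weak$^*$ unit ball of $(\ker P_D)^*$ is uniformly Eberlein compact by Theorem~\ref{charUECandHG}, so the bounded weak$^*$-compact set $r[K]$, and hence~$K$, is uniformly Eberlein compact.

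The backward direction of~\romanref{lemmaSimpleTopDich3} is direct: for the given $\rho,\lambda,D$, the tail $D\cap(\sup\operatorname{supp}\rho,\omega_1)$ is again a club, and $(\rho+\lambda\delta_\alpha)([\alpha,\omega_1))=\lambda\neq0$ on it, so~\eqref{lemmaSimpleTopDichEq1} contains a club and is stationary. Granted the forward direction of~\romanref{lemmaSimpleTopDich3}, the forward direction of~\romanref{lemmaSimpleTopDich2} then follows contrapositively: failure of~\romanref{lemmaSimpleTopDich1a} forces~\romanref{lemmaSimpleTopDich1b} by~\romanref{lemmaSimpleTopDich1}, which, by~\romanref{lemmaSimpleTopDich3} and Lemma~\ref{copiesofw1}, plants in~$K$ a homeomorphic copy of $[0,\omega_1]$, precluding Eberlein (let alone uniform Eberlein) compactness by Corollary~\ref{omega1notEC}.

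The main obstacle is the forward direction of~\romanref{lemmaSimpleTopDich3}. Given stationary~$A$, I would choose $\mu_\alpha\in K$ with $\mu_\alpha([\alpha,\omega_1))\neq0$ for each $\alpha\in A$, and carry out a sequence of stationary refinements. First, using that $K$ is norm-bounded and that stationarity is preserved under one piece of any countable partition, shrink $A$ to a stationary subset on which $\mu_\alpha([\alpha,\omega_1))$ lies in a pre-assigned small disc around a fixed $\lambda\neq0$. Next, apply Fodor's Pressing-Down Lemma (Theorem~\ref{fodorsthm}) to the regressive function $\alpha\mapsto\sup(\operatorname{supp}\mu_\alpha\cap[0,\alpha))$ to localize $\operatorname{supp}\mu_\alpha\cap[0,\alpha)$ inside a fixed initial segment $[0,\gamma_0]$; the pathological sub-case in which this supremum equals $\alpha$ (so the lower support accumulates at~$\alpha$) must be split off and handled by a parallel argument. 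Then apply the $\Delta$-system lemma to the countable supports to extract a common root, and use separability of $C[0,\gamma_0]^*$ to place the restrictions $\mu_\alpha|_{[0,\gamma_0]}$ in a weak$^*$-metrizable compact set which---via countable covers by small norm-balls and stationarity---one refines until they cluster around a fixed $\rho_0$. Taking $\rho\in K$ to be a weak$^*$-cluster point of the $\mu_\alpha$'s, the combinatorial reductions together with weak$^*$-closedness of~$K$ yield $\rho+\lambda\delta_\alpha\in K$ for $\alpha$ in a stationary set~$S$; the weak$^*$-continuity of $\alpha\mapsto\rho+\lambda\delta_\alpha$ on $[0,\omega_1]$ (Lemma~\ref{copiesofw1}) plus weak$^*$-closedness of $K$ then lift this to every $\alpha$ in the club $D:=\overline{S}\subseteq[0,\omega_1)$. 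The delicate step throughout is that stationarity is not preserved under countable intersection, so the Pressing-Down, $\Delta$-system and metric-cover refinements must be orchestrated in a single coordinated pass that makes the residual error $\mu_\alpha-(\rho+\lambda\delta_\alpha)$ simultaneously small in all the relevant coordinates along a single stationary set---this is the technical heart of the argument.
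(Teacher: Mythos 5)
Your handling of clause~\romanref{lemmaSimpleTopDich1}, of the implication \romanref{lemmaSimpleTopDich1a}~$\Rightarrow$ uniform Eberlein compactness in~\romanref{lemmaSimpleTopDich2}, of the backward implication in~\romanref{lemmaSimpleTopDich3}, and of the remaining implication in~\romanref{lemmaSimpleTopDich2} is correct, and parts of it genuinely depart from the paper. For \romanref{lemmaSimpleTopDich1a}~$\Rightarrow$ UEC the paper verifies criterion~\alphref{bsbrwThm3} of Theorem~\ref{bsbrwThm} by an explicit construction of a point-separating family of open $F_\sigma$-sets of finite order; your route via the injectivity of the restriction map $\mu\mapsto\mu|_{\ker P_D}$ (a functional annihilating $\ker P_D$ is supported on~$D$, and \eqref{lemmaSimpleTopDich2Eq1} then kills it by successive differencing along the enumeration of~$D$), combined with Corollary~\ref{OmegaXiLemma10Jan2013ii} and Theorem~\ref{charUECandHG}, is shorter and correct. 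Your direct proof of \romanref{lemmaSimpleTopDich3}~$\Leftarrow$ is likewise simpler than the paper's contrapositive one, and the observation that it suffices to obtain $\rho+\lambda\delta_\alpha\in K$ on a stationary set and then pass to its closure would spare the paper its separate verification that~$D$ is closed.

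The forward implication of~\romanref{lemmaSimpleTopDich3} is, however, where all the difficulty lies, and your sketch has concrete gaps there. First, the $\Delta$-system lemma applies to uncountable families of \emph{finite} sets; the supports of the $\mu_\alpha$ are countably infinite, and an uncountable family of countable sets need not contain an uncountable $\Delta$-subsystem. The paper circumvents this by approximating each $\mu_\alpha$, for every $k\in\N$, by a finitely supported measure $\mu_{\alpha,k}$ with coefficients in a countable field and error $<\min\{\epsilon_0/3,1/k\}$, and applying the $\Delta$-system lemma to the finite supports $F_{\alpha,k}$ separately for each~$k$. Second, the function $\alpha\mapsto\sup(\operatorname{supp}\mu_\alpha\cap[0,\alpha))$ is not regressive when the support accumulates at~$\alpha$ from below, and that case cannot simply be ``split off''; the working substitute (the paper's) is to press down on an ordinal $f(\alpha)<\alpha$ with $|\mu_\alpha|([f(\alpha),\alpha))<\epsilon_0/3$, which exists by Lemma~\ref{littlelemma16nov} and is regressive by construction. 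Third, and decisively: to land \emph{exactly} on $\rho+\lambda\delta_\alpha$ you need the errors to tend to zero, not merely to be $<\delta$ on one stationary set, so a diagonalization over countably many refinements is unavoidable; since neither stationarity nor uncountability survives countable intersections, the ``single coordinated pass'' you defer to is precisely the missing construction. The paper's resolution is to retain, for each~$k$, only an uncountable (no longer stationary) set $A_k''''$ on which the $k$-th approximants have a fixed shape and increasingly separated non-root supports, and to manufacture the club as the set of suprema of interleaved diagonal sequences $(\alpha_k)_k$ with $\alpha_k\in A_k''''$; the interleaving condition $\sup_k\alpha_k=\sup\bigcup_k(F_{\alpha_k,k}\setminus\Delta_k)$ is also what forces the mass lying \emph{above} $\alpha_k$ --- which your localization of the lower support does not touch --- to concentrate at $\sup_k\alpha_k$ in the weak$^*$ limit. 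Without these ingredients the proof of \romanref{lemmaSimpleTopDich3}~$\Rightarrow$ is incomplete.
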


In the proof, we shall require the following well-known, elementary
observations. 

\begin{lemma}\label{littlelemma16nov}
  Let $\mu\in C_0[0,\omega_1)^*$, and let $\alpha\in[\omega,\omega_1)$
  be a limit ordinal. Then, for each $\epsilon >0$, there exists an
  ordinal $\alpha_0<\alpha$ such that $|\mu([\beta,\alpha))|<\epsilon$
  whenever $\beta\in [\alpha_0,\alpha)$.
\end{lemma}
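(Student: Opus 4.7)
The plan is to unwind the measure $\mu$ via the Rudin representation described in equation~\eqref{dualofC0isl1}. Since $[0,\omega_1)$ is scattered, $\mu$ corresponds to a unique function $g\in\ell_1(\omega_1)$ such that $\mu = \sum_{\gamma<\omega_1} g(\gamma)\delta_\gamma$, and consequently
\[ \mu\bigl([\beta,\alpha)\bigr) = \sum_{\gamma\in[\beta,\alpha)} g(\gamma) \qquad (\beta\leq\alpha), \]
with absolute convergence inherited from $\sum_{\gamma<\omega_1}|g(\gamma)|<\infty$.

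Now I would exploit the hypothesis that $\alpha$ is a countable ordinal, so the interval $[0,\alpha)$ is a countable set. The restriction $g|_{[0,\alpha)}$ is therefore absolutely summable, which means that for the given $\epsilon>0$ there is a \emph{finite} subset $F\subseteq[0,\alpha)$ such that $\sum_{\gamma\in[0,\alpha)\setminus F}|g(\gamma)| < \epsilon$. (If $g$ vanishes on $[0,\alpha)$ the statement is trivial, so we may assume $F\neq\emptyset$.)

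Finally, I would set $\alpha_0 = \max(F) + 1$; this ordinal lies in $[0,\alpha)$ precisely because $\alpha$ is a limit ordinal (each ordinal strictly below $\alpha$ has its successor strictly below $\alpha$ as well). Whenever $\beta\in[\alpha_0,\alpha)$ we have $[\beta,\alpha)\cap F = \emptyset$, hence
\[ \bigl|\mu\bigl([\beta,\alpha)\bigr)\bigr| \leq \sum_{\gamma\in[\beta,\alpha)}|g(\gamma)| \leq \sum_{\gamma\in[0,\alpha)\setminus F}|g(\gamma)| < \epsilon, \]
which is the required conclusion. There is no real obstacle here; the only point that requires a moment's care is invoking the limit-ordinal hypothesis to guarantee $\alpha_0 < \alpha$, which is why the statement explicitly assumes $\alpha\in[\omega,\omega_1)$ is a limit ordinal.
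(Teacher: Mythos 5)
Your proof is correct and complete. The paper itself dismisses this lemma with the single word ``Straightforward!'', and your argument is precisely the standard one the authors intend: pass to the Rudin representation $\mu=\sum_{\gamma<\omega_1}g(\gamma)\delta_\gamma$ with $g\in\ell_1(\omega_1)$, extract a finite set $F\subseteq[0,\alpha)$ carrying all but $\epsilon$ of the mass of $g|_{[0,\alpha)}$, and use the limit-ordinal hypothesis to place $\alpha_0=\max(F)+1$ strictly below $\alpha$.
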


\begin{proof} Straightforward! \end{proof}

\begin{lemma}\label{lemma1Nov29}
  \begin{romanenumerate}
  \item\label{lemma1Nov29a} Let $\{ S_n : n\in\N\}$ be a countable
    family of subsets of~$[0,\omega_1)$ such that $\bigcup_{n\in\N}
    S_n$ is a stationary subset of~$[0,\omega_1)$. Then $S_n$ is
    stationary for some~$n\in\N$.
  \item\label{lemma1Nov29b} Let $S$ be a stationary subset
    of~$[0,\omega_1)$, and let~$D$ be a club subset
    of~$[0,\omega_1)$. Then $S\cap D$ is stationary.
  \end{romanenumerate}
\end{lemma}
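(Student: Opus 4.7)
The plan is to prove both parts directly from the fact, stated in the preliminaries, that the collection~$\mathscr{D}$ of subsets of~$[0,\omega_1)$ containing a club set is a countably complete filter; equivalently, the intersection of countably many club subsets of~$[0,\omega_1)$ is a club subset. Neither part presents a genuine obstacle; both are standard, and I anticipate that the only subtlety is keeping the quantifiers straight.

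For~\romanref{lemma1Nov29a}, I would proceed by contraposition. Assume that no~$S_n$ is stationary. By the definition of stationarity, for each $n\in\N$ there exists a club subset~$D_n$ of~$[0,\omega_1)$ with $S_n\cap D_n=\emptyset$. Set $D=\bigcap_{n\in\N}D_n$, which is a club subset of~$[0,\omega_1)$ by countable completeness of~$\mathscr{D}$. Then for every $n\in\N$ we have $D\cap S_n\subseteq D_n\cap S_n=\emptyset$, so $D\cap\bigcup_{n\in\N}S_n=\emptyset$, which witnesses that $\bigcup_{n\in\N}S_n$ is not stationary.

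For~\romanref{lemma1Nov29b}, I would verify the definition directly. Let~$E$ be an arbitrary club subset of~$[0,\omega_1)$. Then $D\cap E$ is again a club subset of~$[0,\omega_1)$ (the case $n=2$ of the countable completeness of~$\mathscr{D}$, or equivalently the two-fold intersection of clubs). Since~$S$ is stationary, $S\cap(D\cap E)\neq\emptyset$, that is, $(S\cap D)\cap E\neq\emptyset$. As $E$ was arbitrary, $S\cap D$ is stationary.

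Both arguments are short and rely only on the filter property of club sets already recorded in Section~\ref{sectPrelim}; I expect the write-up to take only a few lines.
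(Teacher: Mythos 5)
Your proposal is correct and follows essentially the same route as the paper: part (i) by contraposition using the countable completeness of the club filter, and part (ii) by intersecting the given club with an arbitrary test club. No comments are needed beyond noting that the appeal to ``countable completeness of $\mathscr{D}$'' and the paper's direct intersection of countably many clubs are the same fact stated in the preliminaries.
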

\begin{proof} \romanref{lemma1Nov29a}. Suppose contrapositively that
  $S_n$ is not stationary for each~$n\in\N$, and take a club
  subset~$D_n$ of~$[0,\omega_1)$ such that $S_n\cap D_n =
  \emptyset$. Then $D = \bigcap_{n\in\N} D_n$ is a club subset
  of~$[0,\omega_1)$ such that $\bigl(\bigcup_{n\in\N} S_n\bigr)\cap D
  = \emptyset$, which shows that $\bigcup_{n\in\N} S_n$ is not
  stationary.

  \romanref{lemma1Nov29b}.  We have $(S\cap D)\cap E = S\cap (D\cap
  E)\neq\emptyset$ for each club subset~$E$ of~$[0,\omega_1)$ because
  $D\cap E$ is a club subset.
\end{proof}

\begin{lemma}\label{dualballwsseqcomp} Let $K$ be a  scattered locally
  compact space. Then the unit ball of $C_0(K)^*$ is weakly$^*$
  sequentially compact.

  In particular, the unit ball of $C_0[0,\omega_1)^*$ is weakly$^*$
  sequentially compact.
\end{lemma}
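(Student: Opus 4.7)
I would first pass to the one-point compactification $\widetilde K = K \cup \{\infty\}$, which is again scattered and Hausdorff, and now compact. The inclusion $C_0(K) \hookrightarrow C(\widetilde K)$ has as adjoint the weak$^*$-to-weak$^*$ continuous surjection $C(\widetilde K)^* \to C_0(K)^*$ given by restricting measures to $K$; a bounded sequence in $C_0(K)^*$ extends trivially (by zero at $\infty$) to a bounded sequence in $C(\widetilde K)^*$, and weak$^*$-convergent subsequences project down, so it suffices to prove the statement for compact scattered $\widetilde K$. By the Rudin representation already recalled in the preliminaries, $C(\widetilde K)^*$ is isometrically isomorphic to $\ell_1(\widetilde K)$, so each measure has countable support; thus, given a bounded sequence $(\mu_n) \subseteq B_{C(\widetilde K)^*}$, the union $S := \bigcup_n \operatorname{supp}(\mu_n)$ is a countable subset of $\widetilde K$.

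Next, let $L := \overline{S}$ be the closure of $S$ in $\widetilde K$: this is compact, scattered, and separable, and each $\mu_n$ is supported on $L$. The restriction map $C(\widetilde K) \to C(L)$ is surjective by Tietze's extension theorem (since $L$ is closed in the compact Hausdorff space $\widetilde K$), so its adjoint $C(L)^* \hookrightarrow C(\widetilde K)^*$ is weak$^*$-to-weak$^*$ continuous, and it suffices to establish weak$^*$-sequential compactness of $B_{C(L)^*}$. Once one knows that $L$ is metrizable, $C(L)$ is separable, so $B_{C(L)^*}$ is weak$^*$-metrizable, and weak$^*$-sequential compactness follows from the weak$^*$-compactness given by Alaoglu's theorem.

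The hard part --- and the main obstacle --- is therefore to verify that $L$ is metrizable. In the principal case $\widetilde K = [0, \omega_1]$ of the ``in particular'' clause, this is elementary: any countable subset of $[0, \omega_1)$ is bounded by some countable ordinal, so $L$ is contained in a countable compact initial segment together with possibly $\{\omega_1\}$, hence $L$ is itself countable; and every countable compact Hausdorff space is metrizable by Mazurkiewicz--Sierpi\'nski. For a general compact scattered Hausdorff space $\widetilde K$, I would instead proceed by transfinite induction on the Cantor--Bendixson rank of $\widetilde K$; in the compact Hausdorff setting this rank is always a successor ordinal (a strictly decreasing transfinite chain of nonempty closed subsets of a compact Hausdorff space has nonempty intersection by the finite intersection property), which leaves only the successor step to analyze, and there one decomposes the sequence using the finite top layer $F := \widetilde K^{(\alpha)}$ and the open complement $U := \widetilde K \setminus F$ of strictly smaller Cantor--Bendixson rank, gluing the partial weak$^*$-limits via Urysohn bump functions at the finitely many points of $F$.
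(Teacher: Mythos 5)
Your treatment of the ``in particular'' clause --- the only instance of the lemma the paper ever uses --- is correct, and it takes a route entirely different from the paper's. The paper's proof is a two-line citation: $K$ scattered makes $C_0(K)$ an Asplund space, so the dual ball is Radon--Nikodym compact in the weak$^*$ topology and hence sequentially compact. Your argument for $[0,\omega_1)$ is elementary and self-contained: pass to the one-point compactification, use Rudin's theorem to see that the union $S$ of the supports is countable, note that a countable subset of $[0,\omega_1]$ has countable (hence metrizable) closure $L$, embed $C(L)^*$ weak$^*$-continuously into $C[0,\omega_1]^*$ via the adjoint of the (Tietze-surjective) restriction map, and conclude from Alaoglu's theorem together with the weak$^*$ metrizability of $B_{C(L)^*}$. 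All of these steps check out.

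The proof of the general statement, however, has a genuine gap: the transfinite induction on the Cantor--Bendixson rank does not close. You are right that a compact scattered space has successor rank $\alpha+1$ with finite top level $F=\widetilde K^{(\alpha)}$, but removing $F$ leaves the merely \emph{locally compact} space $U=\widetilde K\setminus F$. If you return to the compact setting by compactifying $U$, the rank need not drop: for $\widetilde K=[0,\omega^{\omega}]$ one has $F=\{\omega^{\omega}\}$, and the one-point compactification of $U=[0,\omega^{\omega})$ is again $[0,\omega^{\omega}]$, of the same rank $\omega+1$. If instead the induction runs over locally compact scattered spaces, then limit ranks genuinely occur --- $[0,\omega_1)$ itself has rank $\omega_1$ --- and there is no finite top layer to peel off, so the limit case, which your compactness remark was meant to dispose of, is precisely where the difficulty sits and no argument is offered for it. Two further points: the gluing step is not automatic, since weak$^*$ convergence of $\mu_n|_U$ in $C_0(U)^*$ together with convergence of the atoms at the points of $F$ does not give weak$^*$ convergence in $C(\widetilde K)^*$ (mass can escape to $F$: $\delta_n\to\delta_{\omega}$ in $C[0,\omega]^*$ while $\delta_n|_{\N}\to 0$ in $\ell_1=c_0^*$), though this is repairable by extracting further along $\mu_n(\phi_x)$ for fixed bump functions $\phi_x$; and your diagnosis that metrizability of $L=\overline{S}$ is the true obstruction is accurate --- for the one-point compactification of a Mr\'owka--Isbell space over an uncountable almost disjoint family, the closure of the countable dense set $\N$ is the whole non-metrizable space --- so some genuinely different input, such as the Asplund/Radon--Nikodym argument the paper cites, is needed in general.
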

\begin{proof} The fact that $K$ is scattered implies that the Banach
  space~$C_0(K)$ is Asplund. Consequently, the unit ball of~$C_0(K)^*$
  in its weak$^*$ topology is Radon--Nikodym compact, and thus
  sequentially compact.
\end{proof}

\begin{proof}[Proof of Lemma~{\normalfont{\ref{lemmaSimpleTopDich}}}]
  Let~$S$ denote the set given by~\eqref{lemmaSimpleTopDichEq1}.
  Since weakly$^*$ compact sets are bounded, we may suppose that $K$
  is contained in the unit ball of~$C_0[0,\omega_1)^*$.

  Part~\romanref{lemmaSimpleTopDich1} is clear
  because~\romanref{lemmaSimpleTopDich1b} is the negation
  of~\romanref{lemmaSimpleTopDich1a}.

  \romanref{lemmaSimpleTopDich2}, $\Rightarrow$. Suppose that~$D$ is a
  club subset of~$[0,\omega_1)$ such
  that~\eqref{lemmaSimpleTopDich2Eq1} holds.  Replacing~$D$ with its
  intersection with the club subset of limit ordinals
  in~$[\omega,\omega_1)$, we may additionally suppose that~$D$
  consists entirely of infinite limit ordinals.  In the case of real
  scalars, let~$\Delta$ be the collection of open intervals
  $(q_1,q_2)$, where $q_1<q_2$ are rational and $0\notin(q_1,q_2)$.
  Otherwise $\mathbb{K} = \C$, in which case we define~$\Delta$ as the
  collection of open rectangles $(q_1,q_2)\times(r_1,r_2)$ in the
  complex plane, where $q_1<q_2$ and $r_1<r_2$ are rational and
  \mbox{$0 = (0,0)\notin(q_1,q_2)\times (r_1,r_2)$}. In both cases
  $\Delta$ is countable, so that we can take a bijection
  $\delta\colon\N\to\Delta$.

  For technical reasons, it is convenient to introduce a new limit
  ordinal, which is the predecessor of~$0$, and which we therefore
  suggestively denote by~$-1$.  Set $D' = D\cup\{-1\}$. For each
  $\alpha\in D'$, we define $\alpha^+ = \pi_D(\alpha+1) = \min
  (D\cap[\alpha+1,\omega_1))\in D$, using the notation of
  Lemma~\ref{phiDlemma07082012}\romanref{phiDlemma07082012ii}.
  Let~$\Gamma_\alpha$ denote the set of ordered pairs $(\xi,\eta)$ of
  ordinals such that \mbox{$\alpha\le\xi<\eta<\alpha^+$}, and take a
  bijection $\gamma_\alpha\colon\N\to\Gamma_\alpha$. Moreover, let
  \mbox{$\sigma\colon\N\to\N\times\N$} be a fixed bijection, chosen
  independently of~$\alpha$. We can then define a bijection by \[
  \tau_\alpha =
  (\gamma_\alpha\times\delta)\circ\sigma\colon\N\to\Gamma_\alpha\times\Delta\qquad
  (\alpha\in D'). \] Hence, for each $n\in\N$ and $\alpha\in D'$, we
  have $\tau_\alpha(n) = (\xi,\eta,R)$ for some
  $(\xi,\eta)\in\Gamma_\alpha$ and $R\in\Delta$, where $R$ depends
  only on~$n$, not on~$\alpha$. Using this notation, we define
  \begin{equation*}
    G_\alpha^n = \{\mu\in K : \mu([\xi+1,\eta])\in R\},
  \end{equation*}
  which is a relatively weakly$^*$ open $F_\sigma$-subset of~$K$
  because~$R$ is an open $F_\sigma$-subset of~$\mathbb{K}$ and the
  indicator function $\mathbf{1}_{[\xi+1,\eta]}$ is continuous. Let
  $\mathscr{F}_n = \{G_\alpha^n : \alpha\in D'\}$. We shall now
  complete the proof of \romanref{lemmaSimpleTopDich2}, $\Rightarrow$,
  by verifying that the family $\mathscr{F} =
  \bigcup_{n\in\N}\mathscr{F}_n$ satisfies
  condi\-tions~\eqref{lemmaSimpleTopDich2b}--\eqref{lemmaSimpleTopDich2c}
  of Theorem~\ref{bsbrwThm}\alphref{bsbrwThm3}.

  \eqref{lemmaSimpleTopDich2b}. Suppose that $\mu,\nu\in K$ are
  distinct. Since~$\mu$ and~$\nu$ are purely atomic, we have
  $\mu(\{\alpha\})\neq\nu(\{\alpha\})$ for some
  $\alpha\in[0,\omega_1)$. By interchanging $\mu$ and~$\nu$ if
  necessary, we may suppose that \mbox{$\mu(\{\alpha\})\neq 0$}, in
  which case there exists $R\in\Delta$ such that $\mu(\{\alpha\})\in
  R$ and $\nu(\{\alpha\})\notin\overline{R}$. We shall now split into
  two cases.

  Suppose first that $\alpha$ belongs to~$D$. Then, as $\alpha^+$ also
  belongs to~$D$, \eqref{lemmaSimpleTopDich2Eq1} implies that
  \[ \mu([\alpha+1,\alpha^+)) = \mu([\alpha, \omega_1))
  -\mu(\{\alpha\}) - \mu([\alpha^+,\omega_1)) = 0-\mu(\{\alpha\})-0\in
  -R, \] and similarly $\nu([\alpha+1,\alpha^+)) =
  -\nu(\{\alpha\})\notin -\overline{R}$. Since $-R$ and the complement
  of~$-\overline{R}$ are open, and $\alpha^+$ is a limit ordinal,
  Lemma~\ref{littlelemma16nov} enables us to find
  $\eta\in[\alpha+1,\alpha^+)$ such that
  \begin{equation}\label{lemmaSimpleTopDich2bEq1}
    \mu([\alpha+1,\eta))\in -R\qquad\text{and}\qquad
    \nu([\alpha+1,\eta))\notin-\overline{R}.
  \end{equation}
  The pair $(\alpha,\eta)$ then belongs to~$\Gamma_\alpha$,
  and~\eqref{lemmaSimpleTopDich2bEq1} shows that $\mu\in G_\alpha^n$
  and $\nu\notin G_\alpha^n$ for $n =
  \tau_\alpha^{-1}(\alpha,\eta,-R)\in\N$, as desired.

  Secondly, in the case where $\alpha\notin D$ we can take $\beta\in
  D'$ such that $\beta<\alpha<\beta^+$. (This is where the
  introduction of the new ordinal~$-1$ is useful.)  If $\alpha =
  \zeta+1$ for some ordinal~$\zeta$, then the pair $(\zeta,\alpha)$
  belongs to~$\Gamma_\beta$, so that we can define $n =
  \tau_\beta^{-1}(\zeta,\alpha,R)\in\N$, and we have $\mu\in
  G_\beta^n$ and $\nu\notin G_\beta^n$ because $\mu([\zeta+1,\alpha])
  = \mu(\{\alpha\})\in R$ and %\qquad\text{and}\qquad
  $\nu([\zeta+1,\alpha]) = \nu(\{\alpha\})\notin \overline{R}$.

  Otherwise $\alpha$ is an infinite limit ordinal. By
  Lemma~\ref{littlelemma16nov}, we can find $\xi\in[\beta,\alpha)$
  such that $\mu([\xi+1,\alpha))$ and $\nu([\xi+1,\alpha))$ are as
  small as we like. In particular, since $\mu(\{\alpha\})$ and
  $\nu(\{\alpha\})$ belong to the open sets~$R$
  and~$\mathbb{K}\setminus\overline{R}$, respectively, we can choose
  $\xi\in[\beta,\alpha)$ such that $\mu([\xi+1,\alpha])\in R$ and
  $\nu([\xi+1,\alpha])\notin\overline{R}$.  Hence the pair
  $(\xi,\alpha)$ belongs to~$\Gamma_\beta$, and we have $\mu\in
  G_\beta^n$ and $\nu\notin G_\beta^n$ for $n =
  \tau_\beta^{-1}(\xi,\alpha,R)\in\N$.

  \eqref{lemmaSimpleTopDich2c}. Assume towards a contradiction that
  \mbox{$\sup_{\mu\in K}\bigl|\{ G\in\mathscr{F}_n : \mu\in
    G\}\bigr|$} is in\-finite for some $n\in\N$, and let $(n_1,n_2) =
  \sigma(n)\in\N^2$.  We shall focus on the case of complex scalars
  because it is slightly more complicated than the real case.  Set
  \mbox{$R = \delta(n_2) = (q_1,q_2)\times (r_1,r_2)\in\Delta$}.
  Since $(0,0)\notin R$, either $0\notin (q_1,q_2)$ or $0\notin
  (r_1,r_2)$. Suppose that we are in the first case, and choose
  $m\in\N$ such that $m\cdot\min\{|q_1|,|q_2|\} > 1$. By the
  assumption, we can find $\mu\in K$ and ordinals
  $\alpha_1<\alpha_2<\cdots<\alpha_m$ in~$D'$ such that
  $\mu\in\bigcap_{j=1}^m G_{\alpha_j}^n$. Letting $(\xi_j,\eta_j) =
  \gamma_{\alpha_j}(n_1)\in\Gamma_{\alpha_j}$ for each
  $j\in\{1,\ldots,m\}$, we have $\tau_{\alpha_j}(n) =
  (\xi_j,\eta_j,R)$, so that $\mu([\xi_j+1,\eta_j])\in R$ because
  $\mu\in G_{\alpha_j}^n$.  The intervals
  $[\xi_1+1,\eta_1],[\xi_2+1,\eta_2],\ldots,[\xi_m+1,\eta_m]$ are
  disjoint because
  \[ \alpha_1\le \xi_1<\eta_1< \alpha_1^+\le\alpha_2\le
  \xi_2<\eta_2<\alpha_2^+\le\cdots\le \alpha_m\le \xi_m<\eta_m<
  \alpha_m^+ , \] and hence we conclude that
  \begin{align*} 1 &\ge\|\mu\| \ge\biggl|\mu\biggl(\bigcup_{j=1}^m
    [\xi_j+1,\eta_j]\biggr)\biggr| = \biggl|\sum_{j=1}^m
    \mu([\xi_j+1,\eta_j])\biggr|\\ &\ge\biggl|\realpart \sum_{j=1}^m
    \mu([\xi_j+1,\eta_j])\biggr| = \biggl| \sum_{j=1}^m \realpart
    \mu([\xi_j+1,\eta_j])\biggr| \ge m\cdot\min\{|q_1|,|q_2|\}
    >1, \end{align*} which is clearly absurd. The case where $0\notin
  (r_1,r_2)$ is very similar: we simply replace $\min\{|q_1|,|q_2|\}$
  and the real part with $\min\{|r_1|,|r_2|\}$ and the imaginary part,
  respectively.

  The case where $\mathbb{K} = \R$ is also similar, but easier,
  because there is no need to pass to the real part in the above
  calculation.

  \romanref{lemmaSimpleTopDich3},~$\Leftarrow$, is an easy consequence
  of the previous implications. Suppose contrapositively that
  condition~\alphref{lemmaSimpleTopDich1b} is not satisfied. Then,
  by~\romanref{lemmaSimpleTopDich1},
  condition~\alphref{lemmaSimpleTopDich1a} holds, so that $K$ is
  uniformly Eberlein compact by what we have just proved.  Each
  weakly$^*$ closed subset of $K$ is therefore also uniformly Eberlein
  compact, and hence Corollary~\ref{omega1notEC} implies that no
  subset of~$K$ is homeomorphic to~$[0,\omega_1]$. The desired
  conclusion now follows from Lemma~\ref{copiesofw1}.

  \romanref{lemmaSimpleTopDich3}, $\Rightarrow$.  Suppose that the
  set~$S$ given by~\eqref{lemmaSimpleTopDichEq1} is stationary. Since
  \[ S = \bigcup_{n\in\N}\Bigl\{\alpha\in [0,\omega_1) :
  \bigl|\mu([\alpha,\omega_1))\bigr| > \frac{1}{n}\
  \text{\normalfont{for some}}\ \mu\in K\Bigr\}, \]
  Lemma~\ref{lemma1Nov29}\romanref{lemma1Nov29a} implies that the set
  \[ S_0 = \bigl\{\alpha\in [0,\omega_1) :
  \bigl|\mu([\alpha,\omega_1))\bigr| > \epsilon_0\
  \text{\normalfont{for some}}\ \mu\in K\bigr\} \] is stationary for
  some $\epsilon_0 >0$. Replacing $S_0$ with its intersection with the
  club subset of limit ordinals in $[\omega,\omega_1)$, we may in
  addition suppose that~$S_0$ consists entirely of infinite limit
  ordinals by Lemma~\ref{lemma1Nov29}\romanref{lemma1Nov29b}.  For
  each $\alpha\in S_0$, take $\mu_\alpha\in K$ such that
  $\bigl|\mu_\alpha([\alpha,\omega_1))\bigr| > \epsilon_0$.
  Lemma~\ref{littlelemma16nov} implies that
  $|\mu_\alpha|([f(\alpha),\alpha))<\epsilon_0/3$ for some ordinal
  $f(\alpha)\in[0,\alpha)$, where~$|\mu_\alpha|$ denotes the total
  varia\-tion of $\mu_\alpha$, that is, the positive measure
  on~$[0,\omega_1)$ given by \[ |\mu_\alpha|(B) = \sum_{\beta\in
    B}\bigl|\mu_\alpha(\{\beta\})\bigr|\qquad
  (B\subseteq[0,\omega_1)). \] By Theorem~\ref{fodorsthm}, $S_0$
  contains a subset~$S'$ which is stationary and for which $f|_{S'}$
  is constant, say $f(\alpha) = \zeta_0$ for each $\alpha\in S'$.

  Define $\mathbb{L} = \mathbb{Q}$ for $\mathbb{K} = \mathbb{R}$ and
  $\mathbb{L} = \{ q + r\mathrm{i} : q,r\in\mathbb{Q}\}$ for
  $\mathbb{K} = \mathbb{C}$, so that $\mathbb{L}$ is a countable,
  dense subfield of~$\mathbb{K}$.  For each $\alpha\in S'$ and
  $k\in\N$, choose a non-empty, finite subset~$F_{\alpha,k}$
  of~$[0,\omega_1)$ and scalars $q_{\alpha,k}^\beta\in\mathbb{L}$ for
  $\beta\in F_{\alpha,k}$ such that
  \begin{equation}\label{eqDefnmualphak}
    \mu_{\alpha,k} = \sum_{\beta\in
      F_{\alpha,k}}q_{\alpha,k}^\beta\delta_\beta\in
    C_0[0,\omega_1)^*
  \end{equation} has norm at most one and satisfies 
  \begin{equation}\label{eqDefnmualphak2}
    \| \mu_{\alpha,k} - \mu_\alpha\| <
    \min\Bigl\{\frac{\epsilon_0}{3},\frac{1}{k}\Bigr\}.
  \end{equation} 

  Suppose that $(\alpha_k)_{k\in \N}$ is a sequence in~$S'$ such that
  $(\mu_{\alpha_k, k})_{k\in\N}$ is weakly$^*$ convergent with
  limit~$\nu\in C_0[0,\omega_1)^*$, say. Then we claim that
  \begin{equation}\label{PKclaim1}
    \nu = \text{w}^*\text{-}\lim_k\mu_{\alpha_k},
  \end{equation}
  a conclusion which we shall require towards the end of the
  proof. Indeed, for each $\epsilon>0$ and $g\in C_0[0,\omega_1)$, we
  can choose $k_0\in\N$ such that $k_0> 2\| g\|/\epsilon$ and
  $\bigl|\langle g, \mu_{\alpha_k,k} - \nu\rangle\bigr| <\epsilon/2$
  whenever $k\ge k_0$, and hence
  \[ \bigl|\langle g, \mu_{\alpha_k} - \nu\rangle\bigr| \le
  \bigl|\langle g, \mu_{\alpha_k} - \mu_{\alpha_k,k}\rangle\bigr| +
  \bigl|\langle g, \mu_{\alpha_k,k} - \nu\rangle\bigr|<
  \frac{\|g\|}{k} + \frac{\epsilon}{2}<\epsilon\qquad (k\ge k_0). \]

  Fix $k\in\N$. Since $\{F_{\alpha,k} : \alpha\in S'\}$ is an
  uncountable collection of finite sets, the $\Delta$-system Lemma
  (see~\cite{shanin}, or \cite[Theorem~9.18]{jech} for an exposition)
  yields the existence of a set~$\Delta_k$ and an uncountable
  subset~$A_k$ of~$S'$ such that
  \begin{equation}\label{eqAk1}
    F_{\alpha,k}\cap F_{\beta,k} = \Delta_k\qquad (\alpha,\beta\in A_k,\,
    \alpha\neq\beta).
  \end{equation}
  We shall now arrange that a number of further properties hold by
  passing to suitably chosen uncountable subsets of~$A_k$.

  The fact that $A_k = \bigcup_{n\in\N}\{\alpha\in A_k :
  |F_{\alpha,k}| = n\}$ implies that, for some $n_k\in\N$,
  $A_k$~contains an uncountable subset~$A_k'$ such that
  $|F_{\alpha,k}| = n_k$ for each $\alpha\in A_k'$.  Let
  \mbox{$\theta_{\alpha,k}\colon\{1,\ldots,n_k\}\to F_{\alpha,k}$} be
  the unique order isomorphism for $\alpha\in A_k'$. Recall
  from~\eqref{eqDefnmualphak} that $q_{\alpha,k}^\beta\in\mathbb{L}$
  for $\beta\in F_{\alpha,k}$ are the coefficients
  of~$\mu_{\alpha,k}$. Since~$\mathbb{L}$ is countable and
  \[ A_k' = \bigcup_{q_1,\ldots,q_{n_k}\in\mathbb{L}} \bigl\{
  \alpha\in A_k': q_{\alpha,k}^{\theta_{\alpha,k}(j)} = q_j\ \text{for
    each}\ j\in\{1,\ldots,n_k\}\bigr\}, \] we can find $q_{1,k},\ldots,
  q_{n_k,k}\in\mathbb{L}$ and an uncountable subset~$A_k''$ of~$A_k'$
  such that
  \begin{equation}\label{eqAk5}
    q_{\alpha,k}^{\theta_{\alpha,k}(j)} = q_{j,k}\qquad
    (j\in\{1,\ldots,n_k\},\,\alpha\in A_k'').
  \end{equation}
  
  Our next aim is to show that~$A_k''$ contains an uncountable
  subset~$A_k'''$ such that
  \begin{equation}\label{eqAk3}
    \Delta_k\subsetneq F_{\alpha,k}\qquad (\alpha\in A_k''').
  \end{equation}
  This is trivially true if $\Delta_k$ is empty. Otherwise let
  \mbox{$A_k''' = A_k''\cap [\max\Delta_k+1,\omega_1)$}, which is
  uncountable, and assume towards a contradiction that $\Delta_k =
  F_{\alpha,k}$ for some $\alpha\in A_k'''$.
  By~\eqref{eqDefnmualphak}, we have
  $\mu_{\alpha,k}([\alpha,\omega_1)) = 0$, so that
  \[ \|\mu_\alpha - \mu_{\alpha,k}\|\ge \bigl|(\mu_\alpha -
  \mu_{\alpha,k})([\alpha,\omega_1))\bigr| =
  \bigl|\mu_\alpha([\alpha,\omega_1))\bigr| >\epsilon_0, \] which
  contradicts~\eqref{eqDefnmualphak2}. Hence~\eqref{eqAk3} is
  satisfied for the above choice of~$A_k'''$.

  For each $\beta\in[0,\omega_1)$, the set
  \[ B_k^\beta = \{\alpha\in A_k''' :
  \min(F_{\alpha,k}\setminus\Delta_k)\le\beta <\alpha \} =
  \bigcup_{\gamma\in [0,\beta]\setminus\Delta_k} \{\alpha\in
  A_k'''\cap[\beta+1,\omega_1): \gamma\in F_{\alpha,k}\}
  \] is countable because each of the sets on the right-hand side
  contains at most one element by~\eqref{eqAk1}. Hence
  \mbox{$A_k'''\cap[\beta+1,\omega_1)\setminus B_k^\beta$} is
  uncountable, and thus non-empty; that is, for each
  $\beta\in[0,\omega_1)$, we can find $\alpha\in
  A_k'''\cap[\beta+1,\omega_1)$ such that
  \mbox{$\min(F_{\alpha,k}\setminus\Delta_k) > \beta$}.

  Set $\zeta_1 =
  \sup\bigl(\{\zeta_0\}\cup\bigcup_{j\in\N}\Delta_j\bigr)\in
  [0,\omega_1)$.  A straightforward induction based on the above
  observation yields a strictly increasing transfinite sequence
  $(\alpha_\xi)_{\xi<\omega_1}$ in~$A_k'''\cap[\zeta_1+1,\omega_1)$
  such that $\sup\bigl(\{\zeta_1\}\cup\bigcup_{\eta<\xi}
  F_{\alpha_{\eta},k}\setminus\Delta_k\bigr) <
  \min(F_{\alpha_\xi,k}\setminus\Delta_k)$ for each
  $\xi\in[0,\omega_1)$, and consequently $A_k'''' = \{\alpha_\xi :
  \xi\in[0,\omega_1)\}$ is an uncountable subset
  of~$A_k'''\cap[\zeta_1+1,\omega_1)$ such that
  \begin{equation}\label{eqAk4}
    \sup\biggl(\{\zeta_1\}\cup\bigcup_{\beta\in A_k''''\cap[0,\alpha)}
    F_{\beta,k}\setminus\Delta_k\biggr) <
    \min(F_{\alpha,k}\setminus\Delta_k)\qquad (\alpha\in A_k'''').
  \end{equation}

  Set $m_k = |\Delta_k|<n_k$, and define $\lambda_k =
  \sum_{j=m_k+1}^{n_k} q_{j,k}\in\mathbb{L}$. Then,
  by~\eqref{eqDefnmualphak}, \eqref{eqAk5} and~\eqref{eqAk4}, we have
  $\lambda_k = \mu_{\alpha,k}([\zeta_1,\omega_1))$ for each $\alpha\in
  A_k''''$, and hence
  \begin{align*}
    1\ge\|\mu_{\alpha,k}\|\ge|\lambda_k|
    &\ge\bigl|\mu_{\alpha}([\zeta_1,\omega_1))\bigr| -
    \|\mu_{\alpha,k}-\mu_\alpha\|\\ &\ge
    \bigl|\mu_{\alpha}([\alpha,\omega_1))\bigr| -
    |\mu_{\alpha}|([\zeta_1,\alpha)) - \|\mu_{\alpha,k}-\mu_\alpha\| >
    \epsilon_0 - \frac{\epsilon_0}{3} - \frac{\epsilon_0}{3} =
    \frac{\epsilon_0}{3},
  \end{align*}
  so that, after passing to a subsequence, we may suppose that
  $(\lambda_k)_{k\in\N}$ is convergent with limit
  $\lambda\in\mathbb{K}$, say, where
  $1\ge|\lambda|\ge\epsilon_0/3>0$. (Note that, of all the estimates
  above, only \eqref{eqDefnmualphak2} depends explicitly on~$k$, and
  it clearly remains true after we pass to a sub\-sequence.)

  Suppose that $\Delta_k = \{\beta_{1,k},\ldots,\beta_{m_k,k}\}$,
  where $\beta_{1,k}<\cdots<\beta_{m_k,k}$, and define
  \begin{equation}\label{defnrhok} \rho_k =
    \sum_{j=1}^{m_k} q_{j,k}\delta_{\beta_{j,k}}\in
    C_0[0,\omega_1)^*. \end{equation} 
  Since $\|\rho_k\|\le \sum_{j=1}^{n_k}|q_{j,k}|
  = \|\mu_{\alpha,k}\|\le 1$ for each $\alpha\in A_k''''$,
  Lemma~\ref{dualballwsseqcomp} implies that, after replacing
  $(\rho_{k})_{k\in\N}$ with a subsequence, we may suppose that
  $(\rho_{k})_{k\in\N}$ is weakly$^*$ convergent with limit $\rho\in
  C_0[0,\omega_1)^*$, say.

  Our next aim is to show that, for each $(\alpha_k)_{k\in\N}$ which
  belongs to the set
  \begin{multline} \mathfrak{D} = \Bigl\{(\alpha_k)_{k\in\N} :
    \alpha_k\in A_k'''',\, \alpha_k<\alpha_{k+1}\ \text{and}\
    \max(F_{\alpha_k,k}\setminus \Delta_k) <
    \min(F_{\alpha_{k+1},k+1}\setminus \Delta_{k+1})\\ \text{for
      each}\ k\in\N,\ \text{and}\ \sup_{k\in\N}\alpha_k =
    \sup\bigcup_{k\in\N}F_{\alpha_k,k}\setminus
    \Delta_k\Bigr\},\label{defnscriptD} \end{multline} the sequence
  $(\mu_{\alpha_k})_{k\in\N}$ weakly$^*$ converges to
  $\rho+\lambda\delta_\alpha$, where $\alpha = \sup_{k\in\N}
  \alpha_k\in[0,\omega_1)$. By~\eqref{PKclaim1}, it suffices to show
  that $(\mu_{\alpha_k,k})_{k\in\N}$ weakly$^*$ converges to
  $\rho+\lambda\delta_\alpha$.  To verify this, let $\epsilon>0$ and
  $g\in C_0[0,\omega_1)$ be given. We may suppose that $\|g\|\le 1$.
  Choose $k_1\in\N$ such that $|\lambda-\lambda_k|<\epsilon/3$ and
  $|\langle g, \rho - \rho_k\rangle|<\epsilon/3$ whenever $k\ge k_1$.
  Since~$g$ is continuous at~$\alpha$, which is a limit ordinal, we
  can find $\beta_0\in[0,\alpha)$ such that $|g(\beta)-
  g(\alpha)|<\epsilon/3$ for each $\beta\in[\beta_0,\alpha]$.  By the
  definition of~$\mathfrak{D}$, we can take $k_2\in\N$ such that
  $F_{\alpha_k,k}\setminus\Delta_k\subseteq [\beta_0,\alpha]$
  when\-ever $k\ge k_2$, and thus $|g(\beta)- g(\alpha)|<\epsilon/3$
  for each $\beta\in\bigcup_{k\ge
    k_2}F_{\alpha_k,k}\setminus\Delta_k$. Now we have
  \begin{align*}
    \bigl|\langle g, \mu_{\alpha_k,k} - \rho -
    \lambda\delta_\alpha\rangle\bigr| &\le \bigl|\langle g,
    \mu_{\alpha_k,k} - \rho_k - \lambda_k\delta_\alpha\rangle\bigr| +
    \bigl|\langle g, \rho - \rho_k\rangle\bigr| + \bigl|\langle g,
    (\lambda - \lambda_k)\delta_\alpha\rangle\bigr|,
  \end{align*}
  where the second and third term are both less than $\epsilon/3$
  provided that $k\ge k_1$. To estimate the first term, we observe
  that $\theta_{\alpha_k,k}(j) = \beta_{j,k}$ for each
  $j\in\{1,\ldots,m_k\}$. Consequently~\eqref{eqDefnmualphak},
  \eqref{eqAk5} and~\eqref{defnrhok} imply that
  \[ \mu_{\alpha_k,k} - \rho_k =
  \sum_{j=m_k+1}^{n_k}q_{j,k}\delta_{\theta_{\alpha_k, k}(j)}, \]
  where $\theta_{\alpha_k,k}(j)\in F_{\alpha_k,k}\setminus\Delta_k$
  for each $j\in\{m_k+1,\ldots,n_k\}$, and therefore we have
  \begin{align*}
    \bigl|\langle g, \mu_{\alpha_k,k} - \rho_k -
    \lambda_k\delta_\alpha\rangle\bigr| &=
    \biggl|\sum_{j=m_k+1}^{n_k}q_{j,k}\bigl(g(\theta_{\alpha_k, k}(j))
    - g(\alpha)\bigr)\biggr|\\ &\le \sum_{j=m_k+1}^{n_k}|q_{j,k}|\cdot
    \bigl|g(\theta_{\alpha_k, k}(j)) - g(\alpha)\bigr| <
    \frac{\epsilon}{3}
  \end{align*}
  provided that $k\ge k_2$. Hence we conclude that
  $(\mu_{\alpha_k})_{k\in\N}$ weakly$^*$ converges to
  $\rho+\lambda\delta_\alpha$.

  This implies in particular that $\rho+\lambda\delta_\alpha\in K$ for
  each~$\alpha$ belonging to the set
  \begin{equation}\label{clubsetD5dec}
    D = \Bigl\{\sup_{k\in\N} \alpha_k :
    (\alpha_k)_{k\in\N}\in\mathfrak{D}\Bigr\}.
  \end{equation}
  We shall now complete the proof by showing that~$D$ is a club subset
  of~$[0,\omega_1)$. (Note that this will automatically ensure that
  $\rho\in K$ because the unboundedness of~$D$ implies that the net
  $(\delta_\alpha)_{\alpha\in D}$ converges weakly$^*$ to~$0$, so that
  $\rho = \text{w}^*\text{-}\lim_\alpha (\rho+\lambda\delta_\alpha)\in
  K$.)

  First, to see that $D$ is unbounded, let $\beta\in[0,\omega_1)$ be
  given.  By~\eqref{eqAk4}, the transfinite sequence
  $(\min(F_{\alpha,k}\setminus\Delta_{k}))_{\alpha\in A_k''''}$ is
  strictly increasing for each $k\in\N$, and thus unbounded. We can
  therefore inductively construct a sequence $(\alpha_k)_{k\in\N}$ in
  $[\beta,\omega_1)$ such that $\alpha_k\in A_k''''$~and
  \[ \max\bigl(\{\alpha_k\}\cup
  (F_{\alpha_k,k}\setminus\Delta_k)\bigr) <
  \min\bigl(\{\alpha_{k+1}\}\cup
  (F_{\alpha_{k+1},k+1}\setminus\Delta_{k+1})\bigr)\qquad (k\in\N). \]
  We claim that this sequence $(\alpha_k)_{k\in\N}$ belongs
  to~$\mathfrak{D}$.  Of the conditions in~\eqref{defnscriptD}, only
  the final one is not immediately obvious, and it follows from the
  intertwining relation
  \[ \alpha_k < \min(F_{\alpha_{k+1},k+1}\setminus\Delta_{k+1})\le
  \max(F_{\alpha_{k+1},k+1}\setminus\Delta_{k+1})<\alpha_{k+2}\qquad
  (k\in\N). \] Consequently, we have $\sup D\ge
  \sup_{k\in\N}\alpha_k\ge\beta$, as desired.

  Second, to verify that $D$ is closed, we observe that each
  $\beta\in\overline{D}$ is countable, and thus the limit of a
  sequence $(\beta^j)_{j\in\N}$ in~$D$.  We may suppose that
  $(\beta^j)_{j\in\N}$ is strictly increasing. For each $j\in\N$, take
  $(\alpha^j_k)_{k\in\N}\in\mathfrak{D}$ such that $\beta^j =
  \sup_{k\in\N}\alpha_k^j$. The sequence
  \mbox{$\bigl(\min(\{\alpha_k^{j}\}\cup
    (F_{\alpha^{j}_k,k}\setminus\Delta_k))\bigr)_{k\in\N}$} is then
  strictly increasing with limit~$\beta^j$, so we may inductively
  choose a strictly increasing sequence $(k_j)_{j\in\N}$ of integers
  such that $k_1 = 1$ and
  \begin{equation}\label{choosingkj}
    \beta^j < \min\bigl(\{\alpha_{k_{j+1}}^{j+1}\}\cup
    (F_{\alpha_{k_{j+1}}^{j+1},
      k_{j+1}}\setminus\Delta_{k_{j+1}})\bigr)\qquad (j\in\N).
  \end{equation}
  We now claim that the sequence $(\gamma_\ell)_{\ell\in\N}$ given by
  \[ (\alpha_1^1,\alpha_2^1,\ldots,\alpha_{k_2-1}^1, \alpha_{k_2}^2,
  \alpha_{k_2+1}^2,\ldots,\alpha_{k_3-1}^2,\alpha_{k_3}^3,\ldots,
  \alpha_{k_j-1}^{j-1},\alpha_{k_j}^j,\alpha_{k_j+1}^j, \ldots,
  \alpha_{k_{j+1}-1}^j,\alpha_{k_{j+1}}^{j+1},\ldots) \] belongs
  to~$\mathfrak{D}$.  Indeed, for $\ell\in\N$, let $j\in\N$ be the
  unique number such that \mbox{$k_j\le\ell< k_{j+1}$}. We then have
  $\gamma_\ell = \alpha_\ell^j\in A_\ell''''$.  If $\ell\ne
  k_{j+1}-1$, then $\gamma_{\ell+1} = \alpha_{\ell+1}^j$, in which
  case the inequalities $\gamma_\ell<\gamma_{\ell+1}$ and
  $\max(F_{\gamma_\ell,\ell}\setminus \Delta_\ell) <
  \min(F_{\gamma_{\ell+1},\ell+1}\setminus \Delta_{\ell+1})$ are both
  immediate from~\eqref{defnscriptD}. Otherwise $\ell=k_{j+1}-1$, and
  by~\eqref{choosingkj}, we find
  \[ \gamma_\ell = \alpha_\ell^j< \sup_{k\in\N}\alpha_k^j =
  \beta^j<\alpha_{k_{j+1}}^{j+1} = \gamma_{\ell+1} \] and
  \[ \max(F_{\gamma_\ell,\ell}\setminus \Delta_\ell) < \beta^j <
  \min(F_{\alpha_{k_{j+1}}^{j+1}, k_{j+1}}\setminus\Delta_{k_{j+1}}) =
  \min(F_{\gamma_{\ell+1},\ell+1}\setminus \Delta_{\ell+1}). \] These
  intertwining relations imply that $\sup_{\ell\in\N}\gamma_\ell =
  \beta = \sup\bigcup_{\ell\in\N}(F_{\gamma_\ell,\ell}\setminus
  \Delta_\ell)$, which shows that
  $(\gamma_\ell)_{\ell\in\N}\in\mathfrak{D}$, and hence $\beta\in D$,
  as required.

  \romanref{lemmaSimpleTopDich2},~$\Leftarrow$, now follows easily by
  contraposition, just as
  \romanref{lemmaSimpleTopDich3},~$\Leftarrow$, did. Indeed, suppose
  that condition~\alphref{lemmaSimpleTopDich1a} is not
  satisfied. Then, by~\romanref{lemmaSimpleTopDich1},
  condition~\alphref{lemmaSimpleTopDich1b} is satisfied, so that
  Lemma~\ref{copiesofw1} and the forward implication
  of~\romanref{lemmaSimpleTopDich3} imply that $K$ contains a subset
  which is homeo\-mor\-phic to~$[0,\omega_1]$. Hence $K$ is not
  (uniformly) Eberlein compact by Corollary~\ref{omega1notEC}.
\end{proof}

The idea that a result like Theorem~\ref{dichotomy} might be true was
inspired by a note~\cite{smith} from Richard Smith. The following
corollary confirms a conjecture that he proposed therein.

\begin{corollary} Let $K$ be a weakly$^*$ compact subset
  of~$C_0[0,\omega_1)^*$ such that there exists a continuous
  surjection from~$K$ onto~$[0,\omega_1]$. Then $K$ contains a
  homeomorphic copy of~$[0,\omega_1]$.
\end{corollary}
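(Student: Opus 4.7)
The plan is to apply the topological dichotomy of Theorem~\ref{dichotomy} to~$K$ and rule out the first alternative, so that the second, which is exactly the desired conclusion, must hold.

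More precisely, if alternative~\romanref{dichotomy2} of Theorem~\ref{dichotomy} holds for~$K$, then $K$ contains a homeomorphic copy of~$[0,\omega_1]$ and we are done. It therefore suffices to derive a contradiction from alternative~\romanref{dichotomy1}, that is, from the assumption that~$K$ is uniformly Eberlein compact. Since every uniformly Eberlein compact space is in particular Eberlein compact, and continuous images of Eberlein compact spaces are Eberlein compact (this is a standard consequence of the Amir--Lindenstrauss characterization already invoked in the paper), the existence of a continuous surjection from~$K$ onto $[0,\omega_1]$ would force $[0,\omega_1]$ to be Eberlein compact. This contradicts Corollary~\ref{omega1notEC}.

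The only point that requires care is whether we may quote the preservation of (uniform) Eberlein compactness under continuous surjections from results stated earlier in the paper. The Amir--Lindenstrauss theorem quoted in the introduction, together with the fact that $C(L)$ embeds isometrically in $C(K)$ whenever $L$ is a continuous image of~$K$, gives this preservation for Eberlein compactness, which is all that is needed here, since the failure of mere Eberlein compactness of~$[0,\omega_1]$ already suffices. The main (and essentially only) step is thus invoking the dichotomy; the obstacle-free remainder is the short topological argument sketched above.
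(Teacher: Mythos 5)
Your overall strategy is exactly the paper's: apply Theorem~\ref{dichotomy} and rule out alternative~\romanref{dichotomy1} on the grounds that $[0,\omega_1]$, being a continuous image of~$K$, would otherwise be Eberlein compact, contradicting Corollary~\ref{omega1notEC}. The paper does precisely this, citing the Benyamini--Rudin--Wage theorem~\cite{BRW} for the fact that continuous images of Eberlein compacta are Eberlein compact.

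However, your justification of that fact is not correct as stated. A continuous surjection $q\colon K\to L$ does give an isometric embedding $f\mapsto f\circ q$ of $C(L)$ onto a closed subspace of $C(K)$, and Amir--Lindenstrauss gives that $C(K)$ is weakly compactly generated; but weak compact generation does \emph{not} pass to closed subspaces (Rosenthal's classical example), which is exactly why the Benyamini--Rudin--Wage theorem is a genuine theorem rather than a formal consequence of the Amir--Lindenstrauss characterization. In the present situation the step is easily repaired without invoking~\cite{BRW}: all you need is that $[0,\omega_1]$ is not a continuous image of an Eberlein compact space, and for this it suffices to note that the embedding above would place $C[0,\omega_1]\cong C_0[0,\omega_1)$ isometrically inside the weakly compactly generated space $C(K)$, contradicting Theorem~\ref{w1notUEC} (whose proof rests on the weak Lindel\"{o}f property, which, unlike weak compact generation, \emph{is} inherited by closed subspaces). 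With that substitution your argument is complete and coincides with the paper's.
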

\begin{proof} By a classical result of Benyamini, Rudin and
  Wage~\cite{BRW}, the continuous image of an Eber\-lein compact space
  is Eberlein compact. Since~$[0,\omega_1]$ is not Eberlein compact,
  $K$~can\-not be Eberlein compact, and we are therefore in
  case~\romanref{dichotomy2} of Theorem~\ref{dichotomy}.
\end{proof}

\begin{example}\label{remark32}
  The purpose of this example is to show that the dichotomy stated in
  Lemma~\ref{lemmaSimpleTopDich}\romanref{lemmaSimpleTopDich1} is no
  longer true if condition~\romanref{lemmaSimpleTopDich1b} is replaced
  with the condition
  \begin{capromanenumerate}
  \item[(II$'$)] the set $S = \{ \alpha\in[0,\omega_1) :
    \mu([\alpha+1,\omega_1))\neq 0\ \text{\normalfont{for some}}\
    \mu\in K \}$ is stationary.
  \end{capromanenumerate}
  Indeed, let~$\Lambda$ be the set of all countable limit
  ordinals. Then \mbox{$K=\{\delta_\alpha-\delta_{\alpha+1}: \alpha
    \in \Lambda\}\cup\{0\}$} is a bounded and weakly$^*$ closed subset
  of~$C_0[0,\omega_1)^*$, and thus weakly$^*$ compact. Moreover, $K$
  satisfies condition~\romanref{lemmaSimpleTopDich1a}
  because~\eqref{lemmaSimpleTopDich2Eq1} holds for the club subset $D
  = \Lambda$ (and~$K$ is therefore uniformly Eberlein compact by
  Lemma~\ref{lemmaSimpleTopDich}\romanref{lemmaSimpleTopDich2}),
  but~$K$ also satisfies~(II$'$) because $\Lambda\subseteq S$, and
  each club subset~$E$ of~$[0,\omega_1)$ intersects~$\Lambda$, so that
  $S\cap E\neq\emptyset$. Hence
  conditions~\romanref{lemmaSimpleTopDich1a} and~(II$'$) are not
  mutually exclusive.
\end{example}

\begin{proposition}\label{prop111212}
  Every uniformly Eberlein compact space which contains a dense subset
  of cardinality at most~$\aleph_1$ is homeomorphic to a weakly$^*$
  compact subset of~$C_0[0,\omega_1)^*$.
\end{proposition}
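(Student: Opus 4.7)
My plan is to construct an explicit weak-to-weak$^*$ continuous injection of a weakly compact subset of $\ell_2(\omega_1)$ containing a copy of $K$ into $C_0[0,\omega_1)^*$, using signed ``dipole'' measures whose built-in cancellation neutralises the failure of $\ell_2(\omega_1)$ to embed into $\ell_1(\omega_1)\cong C_0[0,\omega_1)^*$ as a Banach space.

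First, I would realise $K$ as a weakly compact subset of $B_{\ell_2(\omega_1)}$. Since $K$ is uniformly Eberlein compact, it is homeomorphic to a weakly compact subset of some Hilbert space; Mazur's theorem bounds the density character of the closed linear span of $K$ by $d(K)\leq\aleph_1$, so after rescaling $K$ sits inside $B_{\ell_2(\omega_1)}$. Let $\Lambda$ denote the set of countable limit ordinals in $[\omega,\omega_1)$, fix a bijection $\xi\mapsto\beta_\xi$ from $\omega_1$ onto $\Lambda$, and define $\phi\colon B_{\ell_2(\omega_1)}\to C_0[0,\omega_1)^*$ by
\[ \phi(x)=\sum_{\xi<\omega_1}x_\xi|x_\xi|\bigl(\delta_{\beta_\xi}-\delta_{\beta_\xi+1}\bigr). \]
The series converges absolutely in $\ell_1(\omega_1)$ with $\|\phi(x)\|\leq 2\|x\|_2^2\leq 2$, and $\phi$ is injective because $t\mapsto t|t|$ is injective on $\mathbb{K}$ and the ordinals $\{\beta_\xi,\beta_\xi+1:\xi<\omega_1\}$ are pairwise distinct (the $\beta_\xi$ being limits and the $\beta_\xi+1$ successors).

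The main obstacle is verifying the weak-to-weak$^*$ continuity of $\phi$. The key calculation is that for a clopen indicator $\mathbf{1}_{[0,\alpha]}\in C_0[0,\omega_1)$, the pairing $\mathbf{1}[\beta_\xi\leq\alpha]-\mathbf{1}[\beta_\xi+1\leq\alpha]$ equals $1$ exactly when $\alpha=\beta_\xi$ and $0$ otherwise, so
\[ \langle\mathbf{1}_{[0,\alpha]},\phi(x)\rangle=x_{\xi(\alpha)}|x_{\xi(\alpha)}|\cdot\mathbf{1}[\alpha\in\Lambda], \]
where $\xi(\alpha)$ is the unique index with $\beta_{\xi(\alpha)}=\alpha$. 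This is weakly continuous in $x$ as a coordinate projection composed with $t\mapsto t|t|$. Continuity extends by linearity to the subalgebra $\mathcal{A}\subseteq C_0[0,\omega_1)$ generated by $\{\mathbf{1}_{[0,\alpha]}:\alpha<\omega_1\}$, which is closed under products (via $\mathbf{1}_{[0,\alpha]}\mathbf{1}_{[0,\beta]}=\mathbf{1}_{[0,\min(\alpha,\beta)]}$), separates points of $[0,\omega_1)$, and vanishes nowhere, hence is dense in $C_0[0,\omega_1)$ by the Stone--Weierstrass theorem. The uniform bound $\|\phi(x)\|\leq 2$ combined with a standard $3\epsilon$-argument then extends continuity from $\mathcal{A}$ to all of $C_0[0,\omega_1)$. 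Composing with the embedding $K\hookrightarrow B_{\ell_2(\omega_1)}$, the map $\phi|_K$ is a continuous injection from a compact space into a Hausdorff space, hence a homeomorphism onto a weak$^*$-compact subset of $C_0[0,\omega_1)^*$, as required.

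The cancellation structure of the dipoles is essential: the naive simplification $x\mapsto\sum x_\xi|x_\xi|\delta_{\beta_\xi}$ would fail continuity, since the weakly null sequence $(e_n)_{n\in\N}$ in $\ell_2(\omega_1)$ would map to $(\delta_n)_{n\in\N}$, which pairs with $\mathbf{1}_{[0,\omega]}\in C_0[0,\omega_1)$ to give the constant $1$ and so converges weak$^*$ to $\delta_\omega\neq 0$. The dipoles $\delta_{\beta_\xi}-\delta_{\beta_\xi+1}$ with $\beta_\xi\in\Lambda$ guarantee that each clopen indicator detects at most one coordinate of $x$, which restores continuity.
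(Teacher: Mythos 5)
Your proposal is correct and is essentially the paper's own argument: the paper also embeds $K$ into the unit ball of $\ell_2(\Lambda)$ (with $\Lambda$ the countable limit ordinals) and uses the same map $f\mapsto\sum_{\alpha\in\Lambda}f(\alpha)|f(\alpha)|(\delta_\alpha-\delta_{\alpha+1})$, checking weak-to-weak$^*$ continuity against the indicators $\mathbf{1}_{[0,\alpha]}$ and concluding by density and compactness. The only differences are cosmetic (indexing $\Lambda$ via a bijection with $\omega_1$, and invoking Stone--Weierstrass where the paper simply cites density of the span of the indicators).
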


\begin{proof}
  As above, let $\Lambda$ be the set of all countable limit
  ordinals. Then every uniformly Eberlein compact space containing a
  dense subset of cardinality at most~$\aleph_1$ embeds in the closed
  unit ball~$B_{\ell_2(\Lambda)}$ of the Hilbert space
  $\ell_2(\Lambda) = \bigl\{ f\colon \Lambda\to\mathbb{K} :
  \sum_{\alpha\in\Lambda} |f(\alpha)|^2<\infty\bigr\}$, equipped with
  the weak topology. Hence it will suffice to prove that the mapping
  given by
  \begin{equation}\label{prop111212eq1}
    \theta\colon\ f\mapsto
    \sum_{\alpha\in\Lambda}f(\alpha)|f(\alpha)|(\delta_\alpha -
    \delta_{\alpha+1}),\quad B_{\ell_2(\Lambda)}\to C_0[0,\omega_1)^*,
  \end{equation}
  is a weakly-weakly$^*$ continuous injection. The injectivity is
  clear.  Suppose that the net $(f_j)_{j\in J}$
  in~$B_{\ell_2(\Lambda)}$ converges weakly to~$f$, and let
  $\epsilon>0$ and $g\in C_0[0,\omega_1)$ be given. Since the
  indicator functions $\mathbf{1}_{[0,\alpha]}$ for
  $\alpha\in[0,\omega_1)$ span a norm-dense subspace
  of~$C_0[0,\omega_1)$, it suffices to consider the case where $g =
  \mathbf{1}_{[0,\alpha]}$ for some $\alpha\in[0,\omega_1)$. Now
  \[ \bigl\langle \mathbf{1}_{[0,\alpha]},\theta(h)\bigr\rangle
  = \begin{cases} h(\alpha)|h(\alpha)|\ &\text{if}\ \alpha\in\Lambda\\
    0\ &\text{otherwise} \end{cases}\qquad (h\in\ell_2(\Lambda)), \]
  so that we may suppose that $\alpha\in\Lambda$. Choosing $j_0\in J$
  such that $|f(\alpha) - f_j(\alpha)|<\epsilon/2$ whenever $j\ge
  j_0$, we obtain
  \begin{align*} \bigl|\bigl\langle \mathbf{1}_{[0,\alpha]},\theta(f) -
    \theta(f_j)\bigr\rangle\bigr|&= \bigl| f(\alpha)|f(\alpha)| -
    f_j(\alpha)|f_j(\alpha)|\bigr|\\ &\le
    2\max\bigl\{|f(\alpha)|,|f_j(\alpha)|\bigr\}|f(\alpha) -
    f_j(\alpha)| < \epsilon\qquad (j\ge j_0), \end{align*} which
  proves that $(\theta(f_j))_{j\in J}$ converges weakly$^*$
  to~$\theta(f)$.
\end{proof}

\begin{remark}
  The mapping $\theta\colon \ell_2(\Lambda)\to C_0[0,\omega_1)^*$
  given by~\eqref{prop111212eq1} is clearly not linear. In fact, no
  weakly-weakly$^*$ continuous, linear mapping $T\colon
  \ell_2(\Lambda)\to C_0[0,\omega_1)^*$ is injective. To verify this,
  we first observe that~$T$ is weakly compact because its domain is
  reflexive, and hence compact because its codomain has the Schur
  property. Moreover, using the reflexivity of~$\ell_2(\Lambda)$ once
  more, we see that the weak-weak$^*$ continuity of~$T$ implies that
  $T = S^*$ for some operator $S\colon
  C_0[0,\omega_1)\to\ell_2(\Lambda)$. Schauder's theorem then shows
  that~$S$ is compact, so that it has separable range. In particular,
  the range of~$S$ is not dense in~$\ell_2(\Lambda)$, and therefore
  $T=S^*$ is not injective.
\end{remark}

\section{Operator theory on $C_0[0,\omega_1)$}\label{section4}
\noindent
The following lemma represents the core of our proof of
Theorem~\ref{surjective}.

\begin{lemma}\label{lemma0102012}
  Let~$X$ be a Banach space, and suppose that there exists a
  surjective operator $T\colon C_0[0,\omega_1)\to X$. Then exactly one
  of the following two alternatives holds:
  \begin{capromanenumerate}
  \item\label{lemma0102012i} either $X$ embeds in a
    Hilbert-generated Banach space; or
  \item\label{lemma0102012ii} there exists a club subset~$D$
    of~$[0,\omega_1)$ such that the restriction of~$T$ to the
    subspace~$\mathscr{R}_D$ given by~\eqref{phiDlemma07082012eq3} is
    bounded below, and $T[\mathscr{R}_D]$ is complemented in~$X$.
  \end{capromanenumerate}
\end{lemma}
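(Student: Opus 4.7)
The plan is to apply the Topological Dichotomy (Theorem~\ref{dichotomy}) to the weakly$^*$ compact set $K:=T^*[B_{X^*}]\subseteq C_0[0,\omega_1)^*$. Since~$T$ is surjective, $T^*$ is injective, so the weak$^*$--weak$^*$ continuous map $T^*|_{B_{X^*}}\colon B_{X^*}\to K$ is a continuous bijection between weak$^*$-compact Hausdorff spaces, and therefore a weak$^*$--weak$^*$ homeomorphism. In alternative~\romanref{dichotomy1}, $K$ (and hence $B_{X^*}$) is uniformly Eberlein compact in the weak$^*$ topology, so Theorem~\ref{charUECandHG} immediately yields~\romanref{lemma0102012i}.

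The substantive work lies in alternative~\romanref{dichotomy2}, which supplies $\rho\in K$, $\lambda\in\mathbb{K}\setminus\{0\}$, and a club subset~$D$ of~$[0,\omega_1)$ with $\{\rho+\lambda\delta_\alpha:\alpha\in D\}\cup\{\rho\}\subseteq K$. I would pull this topological copy of~$[0,\omega_1]$ back through the homeomorphism $(T^*|_{B_{X^*}})^{-1}$ to obtain $y^*\in B_{X^*}$ with $T^*y^*=\rho$ and a weak$^*$-continuous family $(x^*_\alpha)_{\alpha\in D}$ in~$B_{X^*}$ with $T^*x^*_\alpha=\rho+\lambda\delta_\alpha$ and $x^*_\alpha\to y^*$ weakly$^*$ as $\alpha\to\omega_1$ along~$D$. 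Setting $z^*_\alpha:=x^*_\alpha-y^*$, I then have $T^*z^*_\alpha=\lambda\delta_\alpha$, $\|z^*_\alpha\|\leq 2$, and $z^*_\alpha\to 0$ weakly$^*$ as $\alpha\to\omega_1$.

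Combining these functionals with the retraction~$\pi_D$ from Lemma~\ref{phiDlemma07082012}\romanref{phiDlemma07082012ii}, I would define
\[
\tilde{V}\colon\ X\to C_0[0,\omega_1),\qquad (\tilde{V}x)(\alpha) := \frac{1}{\lambda}\bigl\langle x,z^*_{\pi_D(\alpha)}\bigr\rangle.
\]
That $\tilde{V}x$ lies in $C_0[0,\omega_1)$ follows because $\alpha\mapsto\langle x,z^*_\alpha\rangle$ extends continuously to $D\cup\{\omega_1\}$ with value~$0$ at~$\omega_1$, while $\pi_D$ extends continuously to $[0,\omega_1]\to D\cup\{\omega_1\}$ by sending $\omega_1$ to itself; their composition is a continuous function on $[0,\omega_1]$ vanishing at~$\omega_1$. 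The operator $\tilde{V}$ is bounded of norm at most $2/|\lambda|$, and since $(\tilde{V}x)(\alpha)$ depends only on $\pi_D(\alpha)$, its range is contained in~$\mathscr{R}_D$ by~\eqref{phiDlemma07082012eq3}. Using $T^*z^*_\beta=\lambda\delta_\beta$ and the identity $h(\pi_D(\alpha))=h(\alpha)$ valid for $h\in\mathscr{R}_D$, a direct computation gives $\tilde{V}Th=h$ for every $h\in\mathscr{R}_D$. This simultaneously shows that $T|_{\mathscr{R}_D}$ is bounded below and that $Q:=T\tilde{V}$ is a bounded projection of~$X$ onto $T[\mathscr{R}_D]$, establishing~\romanref{lemma0102012ii}.

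The alternatives are mutually exclusive: if both held, then $T[\mathscr{R}_D]\cong\mathscr{R}_D\cong C_0[0,\omega_1)$ would sit as a complemented subspace of~$X$ and therefore embed in a Hilbert-generated, a fortiori weakly compactly generated, Banach space, contradicting Theorem~\ref{w1notUEC}. The main obstacle I anticipate is precisely the continuous lifting step: one must convert the topological copy of~$[0,\omega_1]$ inside~$K$ into a \emph{weak$^*$-continuous} family of preimages in~$X^*$, which is what transports the topological dichotomy to an operator-theoretic one. Once this lifting is secured, the construction of~$\tilde{V}$ as a testing map against $(z^*_\alpha)_{\alpha\in D}$ is essentially formal.
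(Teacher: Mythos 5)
Your proposal is correct and follows essentially the same route as the paper: apply the Topological Dichotomy to $K=T^*[B_{X^*}]$, pull the Dirac measures back through the injective adjoint to functionals on~$X$, and combine these with the retraction~$\pi_D$ to produce a left inverse of $T|_{\mathscr{R}_D}$ and a projection of~$X$ onto $T[\mathscr{R}_D]$. The only (immaterial) difference is in how one checks that the testing map lands boundedly in $C_0[0,\omega_1)$: the paper uses the surjectivity of~$T$ together with the Open Mapping Theorem, whereas you use the weak$^*$-continuity of the lifted family coming from the homeomorphism $T^*|_{B_{X^*}}$; both verifications are valid.
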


\begin{proof} Let $B_{X^*}$ be the closed unit ball of~$X^*$.  The
  weak$^*$ continuity of~$T^*$ implies that the subset
  $K=T^*[B_{X^*}]$ of~$C_0[0,\omega_1)^*$ is weakly$^*$ compact, so by
  Theorem~\ref{dichotomy}, we have:
  \begin{capromanenumerate}
  \item\label{lemma0102012I} either $K$ is uniformly Eberlein compact; or
  \item\label{lemma0102012II} there exist $\rho\in K$,
    $\lambda\in\mathbb{K}\setminus\{0\}$, and a club subset~$D$
    of~$[0,\omega_1)$ such that $\rho+\lambda\delta_\alpha\in K$ for
    each $\alpha\in D$.
  \end{capromanenumerate}
  Since $T^*$ is injective by the assumption, its restriction
  to~$B_{X^*}$ is a weak$^*$ homeomorphism onto~$K$. Hence, in the
  first case, $B_{X^*}$ is also uniformly Eberlein compact, and so~$X$
  embeds in a Hilbert-generated Banach space by
  Theorem~\ref{charUECandHG}.

  Otherwise we can choose functionals $g,g_\alpha\in B_{X^*}$ such
  that $T^*g=\rho$ and $T^*g_\alpha=\lambda\delta_\alpha+\rho$ for
  each $\alpha\in D$. Given $x\in X$, we define a mapping
  $Sx\colon[0,\omega_1)\to\mathbb{K}$ by
  \begin{equation}\label{lemma0102012eq1}
    (Sx)(\alpha) = \langle x, g_{\pi_D(\alpha)} - g\rangle\qquad 
    (\alpha\in[0,\omega_1)), 
  \end{equation}
  where $\pi_D\colon [0,\omega_1)\to D$ is the retraction defined
  by~\eqref{phiDlemma07082012iiEq1}. Suppose that $x = Tf$ for some
  $f\in C_0[0,\omega_1)$. Then, for each $\alpha\in[0,\omega_1)$, we
  have
  \begin{align}\label{lemma0102012eq2} (Sx)(\alpha) &= \langle Tf,
    g_{\pi_D(\alpha)} - g\rangle = \langle f, T^*g_{\pi_D(\alpha)} -
    T^*g\rangle = \lambda f(\pi_D(\alpha)) = \lambda (P_Df)(\alpha),
  \end{align}
  so that $Sx = \lambda P_Df$, where $P_D$ is the projection defined
  in
  Lemma~\ref{phiDlemma07082012}\romanref{phiDlemma07082012iv}. Since
  $\lambda P_Df\in C_0[0,\omega_1)$, we see
  that~\eqref{lemma0102012eq1} defines a mapping $S\colon X\to
  C_0[0,\omega_1)$, which is linear by the linearity of the
  functionals $g_{\pi_D(\alpha)}$ and~$g$.  Moreover, $S$ is bounded
  because the Open Mapping Theorem implies that there exists a
  constant $C>0$, dependent only on the surjective operator~$T$, such
  that, for each $x\in X$, there exists $f\in C_0[0,\omega_1)$ with
  $\| f\|\le C\|x\|$ and $Tf = x$.  Then we have $Sx = \lambda P_Df$
  by~\eqref{lemma0102012eq2}, and hence $\| Sx\| = |\lambda|\, \|
  P_Df\|\le |\lambda|C\|x\|$, as desired.

  Another application of~\eqref{lemma0102012eq2} shows that $STP_D =
  \lambda P_D$ because
  \[ (STP_D)f = S(T(P_Df)) = \lambda P_D(P_Df) = \lambda P_Df\qquad
  (f\in C_0[0,\omega_1)). \] It is now straightforward to verify that
  $T|_{\mathscr{R}_D}$ is bounded below by $|\lambda|/\|S\|$ and that
  the operator $\lambda^{-1}TP_DS\in\mathscr{B}(X)$ is a projection
  with range~$T[\mathscr{R}_D]$.

  Finally, the two conditions are mutually exclusive
  because~\romanref{lemma0102012ii} implies that~$T$ induces an
  isomorphic embedding of~$\mathscr{R}_D\cong C_0[0,\omega_1)$ in~$X$,
  and hence~\romanref{lemma0102012i} cannot be satisfied by
  Theorem~\ref{w1notUEC}.
\end{proof}

\begin{proof}[Proof of Theorem~{\normalfont{\ref{surjective}}}] 
  Suppose that~\romanref{surjective1} is not satisfied. Then, by
  Lemma~\ref{lemma0102012}, we can find a club subset~$D$
  of~$[0,\omega_1)$ and an idempotent operator~$Q$ on~$X$ such that
  $T|_{\mathscr{R}_D}$ is bounded below and $Q[X] = T[\mathscr
  R_D]$. Hence the identity operator on~$C_0[0,\omega_1)$ factors
  through~$T$ by Lemmas~\ref{operatorsfactoringID}
  and~\ref{phiDlemma07082012}\romanref{phiDlemma07082012iv}.

  We shall complete the proof that~\romanref{surjective2} is satisfied
  by showing that $\ker Q$ embeds in a Hilbert-generated Banach space.
  Assume the contrary, and apply Lemma~\ref{lemma0102012} to the
  surjective operator $U\colon f\mapsto (I_X - Q)T f,\,
  C_0[0,\omega_1)\to \ker Q$, to obtain a club subset~$E$
  of~$[0,\omega_1)$ such that $U|_{\mathscr{R}_E}$ is bounded below by
  $\epsilon>0$, say.  Then we have
  \[ \epsilon\|f\|\le \| Uf\| = \bigl\|(I_X - Q)T f\bigr\| = 0\qquad
  (f\in \mathscr{R}_D\cap\mathscr{R}_E), \] so that $\mathscr{R}_D\cap
  \mathscr{R}_E = \{0\}$. This, however, contradicts
  Lemma~\ref{intersections}.

  Theorem~\ref{w1notUEC} shows that conditions~\romanref{surjective1}
  and~\romanref{surjective2} are mutually exclusive.
\end{proof}

\begin{remark} Not all quotients of~$C_0[0,\omega_1)$ are subspaces
  of~$C_0[0,\omega_1)$.  This follows from a result of
  Alspach~\cite{alspachquotients}, which says
  that~$C[0,\omega^\omega]$ has a quotient~$X$ which does not embed
  in~$C[0,\alpha]$ for any countable ordinal~$\alpha$.
  Since~$C_0[0,\omega_1)$ contains a complemented copy
  of~$C[0,\omega^\omega]$, $X$ is also a quotient
  of~$C_0[0,\omega_1)$. However, $X$ does not embed
  in~$C_0[0,\omega_1)$ because $X$ is separable (being a quotient of a
  separable space), and each separable subspace of~$C_0[0,\omega_1)$
  embeds in~$C[0,\alpha]$ for some countable ordinal~$\alpha$
  (\emph{e.g.}, see \cite[Lemma~4.2]{kanialaustsen}).
\end{remark} 

\begin{lemma}\label{dichotomyw1} 
  Exactly one of the following two alternatives holds for each
  continuous mapping~$\theta$ from~$[0,\omega_1]$ into a Hausdorff
  space~$K\colon$
  \begin{capromanenumerate}
  \item\label{dichotomyw1i} either $[0,\omega_1]$ contains a closed,
    uncountable subset~$D$ such that $\theta|_D$ is constant; or
  \item\label{dichotomyw1ii} $[0,\omega_1]$ contains a closed,
    uncountable subset~$E$ such that $\theta|_E$ is injective, and
    hence a homeomorphism onto~$\theta[E]$.
\end{capromanenumerate}
\end{lemma}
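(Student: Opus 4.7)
The plan is to dichotomize based on whether~$\theta$ possesses an uncountable fibre. For mutual exclusivity, observe first that any closed uncountable subset of~$[0,\omega_1]$ must contain~$\omega_1$ and meet~$[0,\omega_1)$ in a club (it is closed and, by cardinality, unbounded in~$[0,\omega_1)$). If both~\romanref{dichotomyw1i} and~\romanref{dichotomyw1ii} held with witnesses~$D$ and~$E$, the clubs $D\setminus\{\omega_1\}$ and $E\setminus\{\omega_1\}$ would intersect in a club, hence contain two distinct points on which $\theta$ is simultaneously constant and injective---impossible.

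For existence, suppose first that $\theta^{-1}(c)$ is uncountable for some $c\in K$. Since~$K$ is Hausdorff, $\{c\}$ is closed, so $D=\theta^{-1}(c)$ is closed and uncountable with $\theta|_D$ constant, establishing~\romanref{dichotomyw1i}. Otherwise every fibre of~$\theta$ is countable, and I would build a club on which~$\theta$ is injective by a closure-points trick. Define
\[ f\colon[0,\omega_1)\to[0,\omega_1),\qquad f(\alpha)=\sup\bigl(\theta^{-1}(\theta(\alpha))\cap[0,\omega_1)\bigr)+1, \]
which is well-defined by countability of the fibre. The set $C$ of $\beta\in[0,\omega_1)$ satisfying $f(\alpha)<\beta$ for all $\alpha<\beta$ is a club by the usual unboundedness-and-closure verification, and for $\alpha<\beta$ in~$C$ we have $\beta>f(\alpha)>\sup\theta^{-1}(\theta(\alpha))$, which forces $\theta(\alpha)\ne\theta(\beta)$.

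To also accommodate the endpoint~$\omega_1$, note that $\theta^{-1}(\theta(\omega_1))\cap[0,\omega_1)$ is countable and hence bounded by some $\gamma_0<\omega_1$. Then $E=\bigl(C\cap(\gamma_0,\omega_1)\bigr)\cup\{\omega_1\}$ is a closed uncountable subset of~$[0,\omega_1]$ on which $\theta$ is injective; being a continuous injection from the compact set~$E$ into the Hausdorff space~$K$, the restriction $\theta|_E$ is automatically a homeomorphism onto~$\theta[E]$. The only non-routine step is the closure-points construction of~$C$ in the second case; mutual exclusivity, the endpoint adjustment, and the compactness/Hausdorff upgrade to a homeomorphism are entirely standard, so I do not expect a serious obstacle.
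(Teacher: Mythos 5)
Your proposal is correct and follows essentially the same route as the paper: split on whether $\theta$ has an uncountable fibre, take that fibre as $D$ in the first case, and in the second case build a closed unbounded set on which distinct points lie above the suprema of each other's fibres, adjusting at the end for the fibre of $\omega_1$. The only cosmetic difference is that you obtain this set as the club of closure points of the fibre-sup function $f$, whereas the paper constructs the same kind of set by an explicit transfinite recursion; the mutual-exclusivity argument via intersecting clubs is likewise the paper's.
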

\begin{proof} Suppose that~$K$ contains a point~$x$ whose pre-image
  under~$\theta$ is uncountable. Then~\romanref{dichotomyw1i} is
  satisfied for $D = \theta^{-1}[\{x\}]$.

  Otherwise each point of~$K$ has countable pre-image
  under~$\theta$. In this case we shall inductively construct a
  strictly increasing transfinite sequence
  $(\alpha_\xi)_{\xi<\omega_1}$ in~$[1,\omega_1)$ such that
  \begin{equation}\label{dichotomyw1Eq1}
    \{ \alpha\in[0,\omega_1) :  \theta(\alpha) = \theta(\omega_1)\ 
    \text{or}\  \theta(\alpha) = \theta(\alpha_\eta)\ 
    \text{for some}\ \eta<\xi\}\subseteq 
    [0,\alpha_\xi)\quad (\xi\in[0,\omega_1))
  \end{equation} 
  and
  \begin{equation}\label{dichotomyw1Eq2}
    \sup_{\eta<\xi}\alpha_\eta\in\{\alpha_\eta:\eta\le\xi\}\qquad 
    (\xi\in[1,\omega_1)). 
  \end{equation} 

  To start the induction, let $\alpha_0 = \sup\{\alpha\in[0,\omega_1)
  : \theta(\alpha) = \theta(\omega_1)\}+1\in[1,\omega_1)$.

  Now assume inductively that, for some $\xi\in[1,\omega_1)$, a
  strictly increasing sequence $(\alpha_\eta)_{\eta<\xi}$ of countable
  ordinals has been chosen in accordance with~\eqref{dichotomyw1Eq1}
  and~\eqref{dichotomyw1Eq2}, and define
  \[ \alpha_\xi = \sup\{\alpha + 1 : \theta(\alpha) =
  \theta(\alpha_\eta)\ \text{for some}\ \eta<\xi\}\in
  \Bigl[\sup_{\eta<\xi}(\alpha_\eta+1),\omega_1\Bigr). \]
  Then~\eqref{dichotomyw1Eq1} is certainly satisfied for~$\xi$.  To
  verify~\eqref{dichotomyw1Eq2}, let $\beta =
  \sup_{\eta<\xi}\alpha_\eta$. The conclusion is clear if this
  supremum is attained.  Otherwise $\xi$ is a limit ordinal, and we
  claim that $\beta =\alpha_\xi$. Since $\beta\le\alpha_\xi$, it
  suffices to show that $\alpha+1\le\beta$ whenever
  $\alpha\in[0,\omega_1)$ satisfies $\theta(\alpha) =
  \theta(\alpha_\eta)$ for some $\eta<\xi$. Now $\eta+1<\xi$ because
  $\xi$ is a limit ordinal, so that~\eqref{dichotomyw1Eq1} holds for
  $\eta+1$ by the induction hypothesis. Hence
  $\alpha+1\le\alpha_{\eta+1} < \beta$, and the induction~continues.

  Let $E = \{\alpha_\xi: \xi<\omega_1\}\cup\{\omega_1\}$, which is
  uncountable because $(\alpha_\xi)_{\xi<\omega_1}$ is strictly
  increasing, and closed by~\eqref{dichotomyw1Eq2}. Moreover,
  $\theta|_E$ is injective by~\eqref{dichotomyw1Eq1}, so
  that~\romanref{dichotomyw1ii} is satisfied.

  Finally, to see that conditions~\romanref{dichotomyw1i}
  and~\romanref{dichotomyw1ii} are mutually exclusive, assume towards
  a contradiction that $[0,\omega_1]$ contains closed, uncountable
  subsets~$D$ and~$E$ such that $\theta|_D$ is constant and
  $\theta|_E$ is injective. Then $D\cap E$ is uncountable and
  contains~$\omega_1$, so that $\theta(\alpha) = \theta(\omega_1)$ for
  each $\alpha\in D\cap E$ by the choice of~$D$. This, however,
  contradicts the injectivity of~$\theta|_E$.
\end{proof}

\begin{proof}[Proof of Theorem~{\normalfont{\ref{clubandscalar}}}] 
  Let $T\in\mathscr{B}(C_0[0,\omega_1))$. The following
  ``function-free'' reformulation of~\eqref{clubandscalarEq1} will
  enable us to simplify certain calculations somewhat:
  \begin{equation}\label{clubandscalarEq2}
    T^*\delta_\alpha = \phi(T)\delta_\alpha\qquad (\alpha\in D).
  \end{equation}
  To prove that there exist~$\phi(T)\in\mathbb{K}$ and a club
  subset~$D$ of~$[0,\omega_1)$ such that this identity is satisfied,
  we consider the composite mapping~$\theta$ given by
  \[ \spreaddiagramrows{2ex}\spreaddiagramcolumns{.5ex}%
  \xymatrix{ [0,\omega_1]\ar@{-->}^-{\displaystyle{\theta}}[rr]%
    \ar_-{\displaystyle{\sigma_{1,0}}}[dr] & & C_0[0,\omega_1)^*\\ &
    C_0[0,\omega_1)^*\ar_-{\displaystyle{T^*}}[ur]\smashw[l]{,}} \]
  where both copies of~$C_0[0,\omega_1)^*$ are equipped with the
  weak$^*$ topology, and $\sigma_{1,0}$ is the injection defined in
  Lemma~\ref{copiesofw1}, that is, $\sigma_{1,0}(\alpha) =
  \delta_\alpha$ for $\alpha\in[0,\omega_1)$ and
  $\sigma_{1,0}(\omega_1) = 0$. Since~$\sigma_{1,0}$ and~$T^*$ are
  continuous, so is~$\theta$. We may therefore apply
  Lemma~\ref{dichotomyw1} to conclude that~$[0,\omega_1]$ contains a
  closed, uncountable subset~$E$ such that either $\theta|_E$ is
  constant, or $\theta|_E$ is injective. Note that $\omega_1\in E$,
  and $E\setminus\{\omega_1\}$ is a club subset of~$[0,\omega_1)$.

  If $\theta|_E$ is constant, then we have
  \[ T^*\delta_\alpha = (T^*\circ\sigma_{1,0})(\alpha) =
  \theta(\alpha) = \theta(\omega_1) = (T^*\circ\sigma_{1,0})(\omega_1)
  = T^*0 = 0\qquad \bigl(\alpha\in E\setminus\{\omega_1\}\bigr), \] so
  that~\eqref{clubandscalarEq2} is satisfied for $D =
  E\setminus\{\omega_1\}$ and $\phi(T) = 0$.

  Otherwise $\theta|_E$ is injective, in which case
  $\widetilde{\theta}\colon\alpha\mapsto\theta(\alpha),\,
  E\to\theta[E]$, is a homeomorphism.  Since~$E$ is homeomorphic
  to~$[0,\omega_1]$, which is not Eberlein compact,
  Theorem~\ref{dichotomy} implies that there exist $\rho\in\theta[E]$,
  $\phi(T)\in\mathbb{K}\setminus\{0\}$ and a club subset~$F$
  of~$[0,\omega_1)$ such that $\rho +
  \phi(T)\delta_\alpha\in\theta[E]$ for each $\alpha\in F$; let
  $\eta_\alpha = \widetilde{\theta}^{-1}(\rho +
  \phi(T)\delta_\alpha)$. Then $(\eta_\alpha)_{\alpha\in F}$ is an
  un\-countable net of distinct elements of~$E$, and it converges
  to~$\widetilde{\theta}^{-1}(\rho)$. The only possible limit of such
  a net is~$\omega_1$, so that $\rho = \theta(\omega_1) = 0$, and we
  have
  \begin{equation}\label{clubandscalarEq3}
    T^*\delta_{\eta_\alpha} = \theta(\eta_\alpha) =
    \phi(T)\delta_\alpha\qquad (\alpha\in F).
  \end{equation}

  We shall now show that $D = \{\alpha\in[0,\omega_1) :
  T^*\delta_\alpha = \phi(T)\delta_\alpha\}$ is a club subset
  of~$[0,\omega_1)$; this will complete the existence part of the
  proof because~\eqref{clubandscalarEq2} is evidently satisfied for
  this choice of~$D$.

  Suppose that $(\alpha_j)_{j\in J}$ is a net in~$D$ converging
  to~$\alpha\in[0,\omega_1)$. The net $(\delta_{\alpha_j})_{j\in J}$
  is then weakly$^*$ convergent with limit~$\delta_\alpha$. Hence, by
  the weak$^*$ continuity of~$T^*$, we have
  \[ T^*\delta_\alpha = \text{w}^*\text{-}\lim_j T^*\delta_{\alpha_j}
  = \text{w}^*\text{-}\lim_j \phi(T)\delta_{\alpha_j} =
  \phi(T)\delta_\alpha, \] so that $\alpha\in D$, which proves that
  $D$ is closed.

  To prove that~$D$ is unbounded, let $\gamma\in[0,\omega_1)$.  For
  each $\beta\in[0,\omega_1)$, the sets $F\cap[0,\beta]$ and
  $\{\alpha\in F : \eta_\alpha\le\beta\}$ are both countable, so that
  the complement of their union, which is equal to $\{\alpha\in
  F\cap[\beta+1,\omega_1) : \eta_\alpha > \beta\}$, is uncountable,
  and thus non-empty. Using this, we can inductively construct a
  sequence $(\alpha_n)_{n\in\N}$ in~$F\cap[\gamma+1,\omega_1)$ such
  that
  \[ \max\{\alpha_n,\eta_{\alpha_n}\} <
  \min\{\alpha_{n+1},\eta_{\alpha_{n+1}}\}\qquad (n\in\N). \] Let
  $\alpha = \sup_{n\in\N}\alpha_n\in F\cap[\gamma+1,\omega_1)$.  Then
  both of the sequences $(\alpha_n)_{n\in\N}$ and
  $(\eta_{\alpha_n})_{n\in\N}$ converge to~$\alpha$, so that
  $(\delta_{\alpha_n})_{n\in\N}$ and
  $(\delta_{\eta_{\alpha_n}})_{n\in\N}$ both weakly$^*$ converge
  to~$\delta_\alpha$, and consequently
  \[ T^*\delta_\alpha = \text{w}^*\text{-}\lim_n
  T^*\delta_{\eta_{\alpha_n}} = \text{w}^*\text{-}\lim_n
  \phi(T)\delta_{\alpha_n} = \phi(T)\delta_\alpha \] by the weak$^*$
  continuity of~$T^*$ and~\eqref{clubandscalarEq3}, so that $\alpha\in
  D$, which is therefore unbounded.

  We shall next prove that the scalar~$\phi(T)$ is uniquely determined
  by the operator~$T$.  Suppose that~$\phi_1(T)$ and~$\phi_2(T)$ are
  scalars such that
  \[ (Tf)(\alpha) = \phi_j(T)f(\alpha)\qquad (\alpha\in D_j,\, f\in
  C_0[0,\omega_1),\, j=1,2) \] for some club subsets ~$D_1$ and~$D_2$
  of~$[0,\omega_1)$.  Then $D_1\cap D_2$ is a club subset, and hence
  non-empty.  Taking $\alpha\in D_1\cap D_2$, we obtain
  \[ (T\mathbf{1}_{[0,\alpha]})(\alpha) =
  \phi_j(T)\mathbf{1}_{[0,\alpha]}(\alpha) = \phi_j(T)\qquad
  (j=1,2), \] so that $\phi_1(T) = \phi_2(T)$, as required.

  Consequently, we can define a mapping $\phi\colon
  T\mapsto\phi(T),\,\mathscr{B}(C_0[0,\omega_1))\to\mathbb{K}$, which
  is non-zero because $\phi(I_{C_0[0,\omega_1)}) =1$. To see that
  $\phi$ is an algebra homomorphism, let $\lambda\in\mathbb{K}$ and
  $T_1,T_2\in\mathscr{B}(C[0,\omega_1))$ be given, and take club
  subsets~$D_1$ and~$D_2$ of~$[0,\omega_1)$ such that
  \[ T_j^*\delta_\alpha = \phi(T_j)\delta_\alpha\qquad (\alpha\in
  D_j,\,j=1,2). \] Then, for each~$\alpha$ belonging to the club
  subset~$D_1\cap D_2$ of~$[0,\omega_1)$, we have
  \[ (\lambda T_1+T_2)^*\delta_\alpha = \lambda T_1^*\delta_\alpha +
  T_2^*\delta_\alpha = (\lambda\phi(T_1) + \phi(T_2))\delta_\alpha \] and
  \[ (T_1T_2)^*\delta_\alpha = T_2^*(T_1^*\delta_\alpha) =
  T_2^*(\phi(T_1)\delta_\alpha) = \phi(T_1)\phi(T_2)\delta_\alpha, \]
  so that $\phi(\lambda T_1+T_2) = \lambda\phi(T_1) + \phi(T_2)$ and
  $\phi(T_1T_2) = \phi(T_1)\phi(T_2)$ by the definition of~$\phi$.
\end{proof}

\begin{lemma}\label{lemma25102012} Let~$T$ be an operator
  on~$C_0[0,\omega_1)$ such that
  \begin{equation}\label{lemma25102012Eq1} (Tf)(\alpha) = 0\qquad
    (f\in C_0[0,\omega_1),\,\alpha\in D) \end{equation}
  for some club subset~$D$ of~$[0,\omega_1)$. Then the range of~$T$ is
  contained  in  the  kernel  of the  projection~$P_D$  introduced  in
  Lemma~{\normalfont{\ref{phiDlemma07082012}}}%
  \romanref{phiDlemma07082012iv}.
\end{lemma}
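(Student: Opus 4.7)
The proof will be essentially immediate once one compares the hypothesis on~$T$ with the explicit description of~$\ker P_D$ recorded in equation~\eqref{phiDlemma07082012eq2}. My plan is as follows.

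The hypothesis~\eqref{lemma25102012Eq1} states that, for every $f\in C_0[0,\omega_1)$, the continuous function~$Tf$ vanishes on the club set~$D$. Recall from Lemma~\ref{phiDlemma07082012}\romanref{phiDlemma07082012iv}, specifically~\eqref{phiDlemma07082012eq2}, that
\[ \ker P_D = \bigl\{g\in C_0[0,\omega_1) : g(\alpha) = 0\ (\alpha\in D)\bigr\}. \]
Therefore I would simply fix an arbitrary $f\in C_0[0,\omega_1)$ and apply the hypothesis to conclude that $(Tf)(\alpha) = 0$ for each $\alpha\in D$, which by the displayed identity means $Tf\in\ker P_D$. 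Since $f$ was arbitrary, this gives $T[C_0[0,\omega_1)]\subseteq\ker P_D$, as required.

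There is no real obstacle here: the lemma is a direct translation between the pointwise vanishing hypothesis and the functional-analytic description of~$\ker P_D$. The only thing worth noting is that we are not claiming~$T$ itself factors as $(I-P_D)T$ with any additional structure---only the set-theoretic inclusion of ranges, which is exactly what~\eqref{phiDlemma07082012eq2} delivers.
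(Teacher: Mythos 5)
Your proof is correct and is exactly the argument the paper intends: the paper's own proof simply states that the lemma is immediate from \eqref{phiDlemma07082012eq2} and \eqref{lemma25102012Eq1}, which is precisely the translation you spell out. Nothing more is needed.
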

\begin{proof}
  This is immediate from~\eqref{phiDlemma07082012eq2}
  and~\eqref{lemma25102012Eq1}.
\end{proof}

\begin{proof}[Proof of Theorem~{\normalfont{\ref{thmcharloywillis}}}]
  We begin by showing that conditions~\alphref{lambdazero}
  and~\alphref{character0} are equivalent. This relies on the fact
  that, by~\eqref{Rudinmatrix}, we have
  \begin{equation}\label{thmcharloywillisEq1}
    T_{\alpha,\beta} = T^*\delta_\alpha(\{\beta\})\qquad
    (\alpha,\beta\in[0,\omega_1)). \end{equation}

  To see that~\alphref{lambdazero} implies~\alphref{character0},
  suppose that~$D$ is a club subset of~$[0,\omega_1)$ such that
  $T_{\alpha,\alpha} = 0$ for each $\alpha\in D$, and take a club
  subset~$E$ of~$[0,\omega_1)$ such that $T^*\delta_\alpha =
  \phi(T)\delta_\alpha$ for each $\alpha\in E$. Then, choosing
  $\alpha\in D\cap E$ (which is possible because $D\cap E$ is a club
  subset, and hence non-empty), we obtain
  \[ 0 = T_{\alpha,\alpha} = T^*\delta_\alpha(\{\alpha\}) =
  \phi(T)\delta_\alpha(\{\alpha\}) = \phi(T), \] which shows
  that~\alphref{character0} is satisfied.

  Conversely, suppose that $\phi(T) = 0$, so that there exists a club
  subset~$D$ of~$[0,\omega_1)$ such that $T^*\delta_\alpha = 0$ for
  each~$\alpha\in D$.  Then~\eqref{thmcharloywillisEq1} immediately
  shows that $T_{\alpha,\alpha} = 0$ for each $\alpha\in D$, so
  that~\alphref{lambdazero} is satisfied.

  We shall next prove that
  conditions~\alphref{character0}--\alphref{Identity} are equivalent.

  The implication \alphref{character0}$\Rightarrow$\alphref{Lzero}
  follows immediately from Lemma~\ref{lemma25102012} and
  Corollary~\ref{OmegaXiLemma10Jan2013ii}.

  \alphref{Lzero}$\Rightarrow$\alphref{tishg}. Suppose that we have $T
  = SR$ for some operators $R\colon C_0[0,\omega_1)\to C_0(L_0)$ and
  \mbox{$S\colon C_0(L_0)\to C_0[0,\omega_1)$}. Then
  $R\in\mathscr{HG}(C_0[0,\omega_1), C_0(L_0))$ by Corollary
  \ref{L0uniformE}, and hence $T\in\mathscr{HG}(C_0[0,\omega_1))$ by
  Lemma~\ref{hilbertideal}\romanref{hilbertideal1}.

  The implication \alphref{tishg}$\Rightarrow$\alphref{tiswcg} is
  clear because each Hilbert-generated Banach space is weakly
  compactly generated.

  The implications
  \alphref{tiswcg}$\Rightarrow$\alphref{Tdoesntfix}$\Rightarrow$%
  \alphref{Identity}$\Rightarrow$\alphref{character0} are all proved
  by contraposition.

  \alphref{tiswcg}$\Rightarrow$\alphref{Tdoesntfix}. Suppose that $TU$
  is an isomorphism onto its range for some operator~$U$
  on~$C_0[0,\omega_1)$. Then, by Theorem~\ref{w1notUEC}, the range
  of~$TU$, and hence the range of~$T$, cannot be contained in any
  weakly compactly generated Banach space.

  \alphref{Tdoesntfix}$\Rightarrow$\alphref{Identity}. Suppose that
  $STR = I_{C_0[0,\omega_1)}$ for some operators~$R$ and~$S$
  on~$C_0[0,\omega_1)$. The operator~$TR$ is then bounded below, and
  hence an isomorphism onto its range, so that $T$ fixes a copy
  of~$C_0[0,\omega_1)$.

  \alphref{Identity}$\Rightarrow$\alphref{character0}.  Suppose that
  $\phi(T)\neq 0$. By rescaling, we may suppose that $\phi(T) = 1$, so
  that~\eqref{clubandscalarEq1} is satisfied for some club subset~$D$
  of~$[0,\omega_1)$. Then, in the notation of
  Lemma~\ref{phiDlemma07082012}, we have a commutative diagram, which
  implies that~\alphref{Identity} is not satisfied:
  \[ \spreaddiagramrows{2ex}\spreaddiagramcolumns{2ex}%
  \xymatrix{
    C_0[0,\omega_1)\ar^-{\displaystyle{I_{C_0[0,\omega_1)}}}[rrr]
    \ar@{-->}[dd] \ar_-{\displaystyle{U_D^{-1}}}[rd] & & &
    C_0[0,\omega_1)\\ & C_0(D)\ar^-{\displaystyle{I_{C_0(D)}}}[r]%
    \ar_-{\displaystyle{S_D}}[dl] &
    C_0(D)\ar_-{\displaystyle{U_D}}[ur]\\
    C_0[0,\omega_1)\ar^-{\displaystyle{T}}[rrr] & & &
    C_0[0,\omega_1)\ar_-{\displaystyle{R_D}}[ul]\ar@{-->}[uu]\smashw[l]{.}} \]
  Indeed, the commutativity of the upper trapezium is clear, while for
  the lower one, we find
  \[ (R_DTS_Dg)(\alpha) = 
  T(g\circ\pi_D)(\alpha) = (g\circ\pi_D)(\alpha) = g(\alpha)\qquad
  (g\in C_0(D),\,\alpha\in D). \] 

  Finally, to see that conditions~\alphref{TinLW}
  and~\alphref{character0} are equivalent, we note that, on the one
  hand, the Loy--Willis ideal~$\mathscr{M}$ is a maximal ideal
  of~$\mathscr{B}(C_0[0,\omega_1))$ by its definition. On the other,
  the implication \alphref{Identity}$\Rightarrow$\alphref{character0},
  which has just been established, shows that the identity operator
  belongs to the ideal generated by any operator not in~$\ker\phi$, so
  that $\ker\phi$ is the \textsl{unique} maximal ideal
  of~$\mathscr{B}(C_0[0,\omega_1))$. Hence $\ker\phi = \mathscr{M}$.
\end{proof}

As we have already noted in the final paragraph of the proof above,
Theorem~\ref{thmcharloywillis} has the following important
consequence, which generalizes \cite[Theorem~1.1]{kanialaustsen}.

\begin{corollary}\label{kerphiuniquemaxideal}
The ideal $\mathscr{M} = \ker\phi = \mathscr{HG}(C_0[0,\omega_1))$ is
  the unique maximal ideal of~$\mathscr{B}(C_0[0,\omega_1))$.
\end{corollary}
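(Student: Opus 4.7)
The plan is to deduce the corollary directly from Theorem~\ref{thmcharloywillis}, which has just been established; no further combinatorial or topological work is required. First I would observe that the three-way equality $\mathscr{M} = \ker\phi = \mathscr{HG}(C_0[0,\omega_1))$ is an immediate restatement of the equivalence of conditions \alphref{TinLW}, \alphref{character0} and \alphref{tishg} in Theorem~\ref{thmcharloywillis}, since each of these conditions singles out exactly the same set of operators $T\in\mathscr{B}(C_0[0,\omega_1))$.

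Next I would address the uniqueness claim. Since $\phi$ is a character on $\mathscr{B}(C_0[0,\omega_1))$ by Theorem~\ref{clubandscalar}, its kernel is a two-sided ideal of codimension one, and is therefore automatically a maximal ideal. To see that it is the only one, it suffices to prove that every proper two-sided ideal is contained in $\ker\phi$. Let $\mathscr{I}$ be such an ideal, and suppose for contradiction that there is an operator $T\in\mathscr{I}$ with $\phi(T)\neq 0$. The implication \alphref{Identity}$\Rightarrow$\alphref{character0} of Theorem~\ref{thmcharloywillis}, taken contrapositively, then tells us that the identity operator on $C_0[0,\omega_1)$ factors through~$T$, say $I_{C_0[0,\omega_1)} = STR$ for some operators $R,S$ on~$C_0[0,\omega_1)$. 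Since $\mathscr{I}$ is a two-sided ideal and $T\in\mathscr{I}$, this forces $I_{C_0[0,\omega_1)}\in\mathscr{I}$, contradicting the properness of~$\mathscr{I}$. Hence $\mathscr{I}\subseteq\ker\phi$, and the maximality of $\ker\phi$ then gives uniqueness.

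There is no genuine obstacle: all of the substantive content has been absorbed into Theorem~\ref{thmcharloywillis} and the construction of the Alspach--Benyamini character, and the present corollary is a two-line packaging of those results. In fact, as the text preceding the corollary already notes, the uniqueness argument was implicitly carried out in the last paragraph of the proof of Theorem~\ref{thmcharloywillis}; the corollary merely records the conclusion together with the identification $\mathscr{M}=\mathscr{HG}(C_0[0,\omega_1))$, which generalizes the main result of~\cite{kanialaustsen}.
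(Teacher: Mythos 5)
Your proposal is correct and follows essentially the same route as the paper: the paper's justification for this corollary is precisely the final paragraph of the proof of Theorem~\ref{thmcharloywillis}, where the contrapositive of \alphref{Identity}$\Rightarrow$\alphref{character0} is used to show that any operator outside $\ker\phi$ generates the whole algebra, so that every proper ideal lies in $\ker\phi$, while the identification with $\mathscr{HG}(C_0[0,\omega_1))$ is just condition~\alphref{tishg}. Your write-up merely makes the routine ideal-theoretic step (a proper two-sided ideal containing such a $T$ would contain the identity) fully explicit, which matches the paper's intent exactly.
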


\begin{proof}[Proof of Corollary~{\normalfont{\ref{blai}}}]
  Let~$\Gamma$ denote the set of all club subsets of~$[0,\omega_1)$,
  ordered by reverse inclusion. This order is filtering upward because
  $D\cap E\in\Gamma$ is a majorant for any pair $D,E\in\Gamma$, and
  hence Theorem~\ref{thmcharloywillis},
  Lemma~\ref{phiDlemma07082012}\romanref{phiDlemma07082012iv} and
  Corollary~\ref{OmegaXiLemma10Jan2013ii} imply that \[ Q_D =
  I_{C_0[0,\omega_1)} - P_D\in\mathscr{M}\qquad (D\in\Gamma) \]
  defines a net of projections, each having norm at most two.  For
  each $T\in \mathscr{M}$, we have $P_{D}T = 0$ for some club
  subset~$D$ of~$[0,\omega_1)$ by
  Lemma~\ref{lemma25102012}. Equation~\eqref{phiDlemma07082012eq2}
  then shows that $P_ET = 0$ for each $E\subseteq D$; that is, $Q_ET =
  T$ whenever $E\ge D$.
\end{proof}

\begin{example}\label{BLAIexample}
  Consider the Hilbert space \mbox{$H = \{ f\colon
    [0,\omega_1)\to\mathbb{K} : \sum_{\alpha<\omega_1}
    |f(\alpha)|^2<\infty\}$}.  The work of Gramsch~\cite{gramsch} and
  Luft~\cite{luft} shows that the set $\mathscr{X}(H)$ of operators
  on~$H$ having separable range is the unique maximal ideal
  of~$\mathscr{B}(H)$. (In fact, Gramsch and Luft proved that the
  entire lattice of closed ideals of~$\mathscr{B}(H)$ is given by \[
  \{0\}\subset \mathscr{K}(H)\subset \mathscr{X}(H)\subset
  \mathscr{B}(H), \] but we do not require the full strength of their
  result.)  Since~$\mathscr{B}(H)$ is a $C^*$-algebra, each of its
  closed ideals has a bounded two-sided approximate identity
  consisting of positive contractions. The purpose of this example is
  to show that, in the case of $\mathscr{X}(H)$, we have a bounded
  two-sided approximate identity $(P_L)_{L\in\Gamma}$ consisting of
  contractive, self-adjoint projections such that $P_LT=T = TP_L$
  eventually for each $T\in\mathscr{X}(H)$. We note in passing that
  algebras which contain a net with this property have been studied in
  a purely algebraic context by Ara and
  Perera~\cite[Defini\-tion~1.4]{araP} and Pedersen and
  Perera~\cite[Sec\-tion~4]{pereraPed}.

  Let $\Gamma$ denote the set of all closed, separable subspaces
  of~$H$, ordered by inclusion. This order is filtering upward because
  $\overline{L+M}\in\Gamma$ majorizes the pair $L,M\in\Gamma$. For
  \mbox{$L\in\Gamma$}, let $P_L\in\mathscr{X}(H)$ be the orthogonal
  projection which has range~$L$. Suppose that
  \mbox{$T\in\mathscr{X}(H)$}, and denote by~$T^\star$ the
  Hilbert-space adjoint of~$T$. We have $T^\star\in\mathscr{X}(H)$
  because each closed ideal of a $C^*$-algebra is self-adjoint, and
  there\-fore $M = \overline{T[H] + T^\star[H]}$ belongs to~$\Gamma$.
  Now, for each $L\in\Gamma$ such that $L\supseteq M$, we see that
  $P_LT = T$ and $P_LT^\star = T^\star$, from which the desired
  conclusion follows by taking the adjoint of the latter equation.
\end{example}

\begin{proof}[Proof of Corollary~{\normalfont{\ref{sum08082012}}}]
  Assume towards a contradiction that, for some natural numbers $m>n$,
  there exists either an operator $R\colon C_0[0,\omega_1)^m\to
  C_0[0,\omega_1)^n$ which is bounded below, or an operator $T\colon
  C_0[0,\omega_1)^n\to C_0[0,\omega_1)^m$ which is surjective.  We
  shall focus on the first case; the other is very similar.  The proof
  is best explained if we represent the operator $R\colon
  C_0[0,\omega_1)^m\to C_0[0,\omega_1)^n$ by the operator-valued
  $(n\times m)$-matrix $(R_{j,k})_{j,k=1}^{n,m}$ given by $R_{j,k} =
  Q_j^{(n)}RJ_k^{(m)}\in\mathscr{B}(C_0[0,\omega_1))$, where
  $Q_j^{(n)}\colon C_0[0,\omega_1)^n\to C_0[0,\omega_1)$ and
  $J_k^{(m)}\colon C_0[0,\omega_1)\to C_0[0,\omega_1)^m$ denote the
  $j^{\text{th}}$ coordinate projection and $k^{\text{th}}$ coordinate
  embedding, respectively.

  Using elementary column operations, we can reduce the scalar-valued
  matrix $S = \bigl(\phi(R_{j,k})\bigr)_{j,k=1}^{n,m}$ to
  column-echelon form; that is, we can find an invertible,
  scalar-valued $(m\times m)$-matrix~$U$ such that $SU$ has
  column-echelon form. Since $m>n$, the final column of~$SU$ must be
  zero. Consequently, each operator in the final column of the
  matrix~$RU$ belongs to~$\mathscr{M} =
  \mathscr{HG}(C_0[0,\omega_1))$, so that
  $RUJ_m^{(m)}\in\mathscr{HG}(C_0[0,\omega_1),C_0[0,\omega_1)^n)$. This,
  however, contradicts Theorem~\ref{w1notUEC} because each of the
  operators $J_m^{(m)}$, $U$ and~$R$ is bounded below, and therefore
  the range of~$RUJ_n^{(n)}$ is isomorphic to its
  domain~$C_0[0,\omega_1)$.
\end{proof}

\begin{proof}[Proof of Corollary~{\normalfont{\ref{cor08082012b}}}]
  Since~$P$ is idempotent, we have $\phi(P)\in\{0,1\}$. We shall
  consider the case where $\phi(P) = 0$; the case where $\phi(P) = 1$
  is similar, just with $P$ and $I_{C_0[0,\omega_1)}-P$ interchanged.
  Let $X =\ker P$ and $Y =
  P[C_0[0,\omega_1)]$. Lemma~\ref{lemma25102012} implies that~$Y$ is
  contained in~$\ker P_D$ for some club subset~$D$
  of~$[0,\omega_1)$. By Corollary~\ref{OmegaXiLemma10Jan2013ii}, $\ker
  P_D$ is isomorphic to a complemented subspace of~$C_0(L_0)$, so that
  the same is true for~$Y$, say $C_0(L_0)\cong Y\oplus Z$ for some
  Banach space~$Z$.  Writing $c_0(\N,W)$ for the $c_0$-direct sum of
  countably many copies of a Banach space~$W$ and using
  Corollary~\ref{c0sumL0}, we obtain
  \[ C_0(L_0)\cong c_0(\N,C_0(L_0))\cong c_0(\N, Y\oplus Z)\cong
  Y\oplus c_0(\N, Y\oplus Z)\cong Y\oplus C_0(L_0). \] Consequently
  $C_0[0,\omega_1)\cong C_0[0,\omega_1)\oplus Y$ because
  $C_0[0,\omega_1)$ contains a complemented subspace isomorphic
  to~$C_0(L_0)$ by Corollary~\ref{OmegaXiLemma10Jan2013iii}.
  Theorem~\ref{surjective} implies that~$X$ contains a complemented
  subspace which is isomorphic to~$C_0[0,\omega_1)$, so that $X\cong
  W\oplus C_0[0,\omega_1)$ for some Banach space~$W$, and hence we
  have
  \[ X\cong W\oplus C_0[0,\omega_1) \cong W\oplus
  C_0[0,\omega_1)\oplus Y\cong X\oplus Y = C_0[0,\omega_1), \] as
  required.
\end{proof}

\begin{proof}[Proof of Corollary~{\normalfont{\ref{cor26Oct2012a}}}] 
  Let $U$ be an isomorphism of~$C_0[0,\omega_1)$ onto the closed
  subspace~$X$, and consider the operator $T =
  JU\in\mathscr{B}(C_0[0,\omega_1))$, where $J\colon X\to
  C_0[0,\omega_1)$ denotes the natural inclusion. Then~$T$ fixes a
  copy of~$C_0[0,\omega_1)$. Hence, by Theorem~\ref{thmcharloywillis},
  we can find operators~$R$ and~$S$ on~$C_0[0,\omega_1)$ such that
  $STR = I_{C_0[0,\omega_1)}$. This implies that $TR$ is an
  isomorphism of~$C_0[0,\omega_1)$ onto its range~$Y$, which is
  contained in~$X$, and~$TRS$ is a projection of~$C_0[0,\omega_1)$
  onto~$Y$.
\end{proof}

\begin{proof}[Proof of Corollary~{\normalfont{\ref{ogdenthm}}}]
  This proof follows closely that of \cite[Proposition~8]{willis},
  where any unexplained terminology can also be found.

  We begin by showing that Willis's ideal of compressible operators
  on~$C_0[0,\omega_1)$, as defined in \cite[p.~252]{willis}, is equal
  to the Loy--Willis ideal~$\mathscr{M}$. Indeed,
  \cite[Proposition~2]{willis} and Corollary~\ref{sum08082012} show
  that the identity operator on $C_0[0,\omega_1)$ is not compressible,
  so that the ideal of compressible operators is proper, and hence
  contained in~$\mathscr{M}$. Conversely, each operator
  $T\in\mathscr{M}$ is compressible by \cite[Proposition~1]{willis}
  because $T$ factors through a complemented subspace~$X$
  of~$C_0[0,\omega_1)$ such that $X\cong C_0(L_0)$, and
  Corollary~\ref{c0sumL0} implies that $X$ is isomorphic to the
  $c_0$-direct sum of countably many copies of itself.

  Next, we observe that null sequences in~$\mathscr{M}$ factor, in the
  sense that for each norm-null sequence~$(T_n)_{n\in\N}$
  in~$\mathscr{M}$, there are $T\in\mathscr{M}$ and a norm-null
  sequence~$(S_n)_{n\in\N}$ in~$\mathscr{M}$ such that $T_n = TS_n$
  for each $n\in\N$. This is a standard consequence of Cohen's
  Factorization Theorem (\emph{e.g.}, see
  \cite[Corollary~I.11.2]{bonsallduncan}), which applies
  because~$\mathscr{M}$ has a bounded left approximate
  identity. However, we do not need Cohen's Factorization Theorem to
  verify this. Indeed, for each $n\in\N$, take a club subset~$D_n$
  of~$[0,\omega_1)$ such that $T_n^*\delta_\alpha = 0$ for each
  $\alpha\in D_n$. Then $D = \bigcap_{n\in\N} D_n$ is a club subset
  of~$[0,\omega_1)$ such that $T_n^*\delta_\alpha = 0$ for each
  $\alpha\in D$ and $n\in\N$, and hence
  Corollary~\ref{OmegaXiLemma10Jan2013ii} and
  Lemma~\ref{lemma25102012} imply that $T = I_{C_0[0,\omega_1)} -
  P_D\in\mathscr{M}$ is an operator such that $T_n = TT_n$ for each
  $n\in\N$.

  Now consider an algebra homomorphism~$\theta$
  from~$\mathscr{B}(C_0[0,\omega_1))$ into some Banach
  algebra~$\mathscr{C}$. Then \cite[Proposition~7]{willis} implies
  that the continuity ideal of~$\theta|_{\mathscr{M}}$
  contains~$\mathscr{M}$, so that the mapping
  $S\mapsto\theta(TS),\,\mathscr{M}\to\mathscr{C},$ is continuous for
  each fixed~$T\in\mathscr{M}$. Since null sequences factor, as shown
  above, this proves the continuity of~$\theta|_{\mathscr{M}}$, and
  thus of~$\theta$ because~$\mathscr{M}$ has finite codimension
  in~$\mathscr{B}(C_0[0,\omega_1))$.
\end{proof}

\begin{proof}[Proof of Corollary~{\normalfont{\ref{commsandtraces}}}]
  Each operator in~$\mathscr{M}$ factors through the Banach space
  $C_0(L_0)$ by Theorem~\ref{thmcharloywillis}.
  Corollary~\ref{c0sumL0} states that $C_0(L_0)$ is isomorphic to the
  $c_0$-direct sum of countably many copies of itself, so that
  \cite[Proposition~3.7]{laustsen} implies that each operator
  on~$C_0(L_0)$ is the sum of at most two commutators, and therefore
  each operator which factors through $C_0(L_0)$ is the sum of at most
  three commutators by \cite[Lemma~4.5]{laustsen}.

  Suppose that $\tau$ is a trace on
  $\mathscr{B}(C_0[0,\omega_1))$. The first part of the proof implies
  that $\mathscr{M}\subseteq\ker\tau$, so that $\tau(T + \lambda
  I_{C_0[0,\omega_1)}) = \lambda\tau(I_{C_0[0,\omega_1)}) = \varphi(T
  + \lambda I_{C_0[0,\omega_1)}) \tau(I_{C_0[0,\omega_1)})$ for each
  $T\in\mathscr{M}$ and $\lambda\in\mathbb{K}$, as desired.  The
  converse implication is clear.
\end{proof}

\section*{Acknowledgements} 
\noindent
The authors would like to thank Richard Smith for a private exchange
of emails~\cite{smith}, which was the seed that eventually grew into
the Topological Dichotomy (Theorem~\ref{dichotomy}).

Part of this work was carried out during a visit of the second author
to Lancaster in February 2012, supported by a London Mathematical
Society Scheme 2 grant (ref.~21101). The authors gratefully
acknowledge this support. The second author was also partially
supported by the Polish National Science Center research grant
2011/01/B/ST1/00657.

\bibliographystyle{amsplain}

\bigskip
\end{document}